\newtheorem{ass}{Assumption}
\def\eps{\epsilon}
\def\Re{\mathbb{R}}
\def\one{\mathbf{1}}
\DeclareMathOperator*{\argmin}{argmin}
\DeclareMathOperator*{\vol}{Vol}
\title{Worst-case complexity analysis of derivative-free methods for multi-objective optimization}
\titlerunning{Complexity analysis in DF multiobjective optimization}
\begin{document}

\author{Giampaolo Liuzzi\and Stefano Lucidi}

\institute{Giampaolo Liuzzi, Corresponding author \at
             Sapienza University of Rome\\
              Via Ariosto 25, 00185, Rome (Italy)\\
              liuzzi@diag.uniroma1.it
           \and
              Stefano Lucidi  \at
             Sapienza University of Rome\\
              Via Ariosto 25, 00185, Rome (Italy)\\
              lucidi@diag.uniroma1.it
}

\date{Received: date / Accepted: date}

\maketitle

\newcommand{\DFMOnew}{{\tt DFMOstrong}}
\newcommand{\DFMOlight}{{\tt DFMOlight}}
\newcommand{\DFMOmin}{{\tt DFMOmin}}
\newcommand{\DFMOmax}{{\tt DFMOmax}}
\renewcommand{\thefootnote}{\fnsymbol{footnote}}



{\small
\begin{abstract}
In this work, we are concerned with the worst case complexity analysis of {\em a posteriori} methods for  unconstrained multi-objective optimization problems where objective function values can only be obtained by querying a black box. We present two main algorithms, namely \DFMOnew\ and \DFMOlight\ which are based on a linesearch expansion technique. In particular, \DFMOnew, requires a complete exploration of the points in the current set of non-dominated solutions, whereas \DFMOlight\ only requires the exploration around a single point in the set of non-dominated solutions. For these algorithms, we derive worst case iteration and evaluation complexity results. 
In particular, the complexity results for \DFMOlight\ aligns with those recently proved in
\cite{custodio2021worst} 
for a directional multisearch method. Furthermore, exploiting an expansion technique of the step, we are also able to give further complexity results concerning the number of iterations with a measure of stationarity above a prefixed tolerance.  
\end{abstract}
}

\keywords{
Multi-objective optimization\and  Derivative-free methods\and Worst-case complexity bounds}

\subclass{90C29 \and 90C30 \and 90C56}


\section{Introduction}
We consider the following unconstrained multi-objective optimization problem
\begin{equation}\label{prob}
 \displaystyle\min_{x\in\Re^n}\ F(x) = (f_1(x),\dots,f_q(x))^\top,
\end{equation}
where $f_i:\Re^n\to \Re$, $i=1,\dots,q$. In the sequel, we assume that the objective functions are 
known only by means of an oracle that only returns their values for any given point $x\in\Re^n$, i.e. first-order information cannot be used and it is impractical or untrustworthy to approximate derivatives. 

In the past decade there has been an increasing interest on the worst-case complexity (WCC) of optimization algorithms (see e.g. \cite{cartis2022evaluation}
and the references therein). 
For single-objective unconstrained optimization, it has been shown \cite{cartis2010complexity} that the steepest descent algorithm has ${\cal O}(\eps^{-2})$ WCC to produce a point with stationarity measure at most equal to $\eps$. Furthermore, the provided bound is tight in the sense that the algorithm can require a number of iterations arbitrarily close to the prescribed WCC. The same WCC result holds for Newton's method, i.e. the Newton's method can be as slow as the steepest descent algorithm. Better WCC bounds can be obtained by using regularized methods and derivatives up to the $p$-th order. In particular, in \cite{birgin2017worst} a WCC of ${\cal O}(\eps^{-(p+1)/p})$ has been proved.  The latter result generalizes the WCC of ${\cal O}(\eps^{-3/2})$ showed for the cubic regularized Newton's method in \cite{cartis2010complexity}. 
When considering derivative-free optimization, in \cite{vicente2013worst} it is proved that directional direct search methods based on sufficient decrease share the same iteration WCC of the steepest descent method, i.e. ${\cal O}(\eps^{-2})$ (the evaluation complexity being ${\cal O}(n\eps^{-2})$ where $n$ is the number of variables). The same WCC bounds and a stronger result have recently been obtained for derivative-free methods based on extrapolation techniques (see \cite{brilli2024worst}). In \cite{cartis2012oracle}, a finite-difference version of the adaptive regularization with cubics algorithm is proposed. It is proved that this special type of derivative-free algorithm has WCC complexity of ${\cal O}(\eps^{-3/2})$ which is thus the same WCC bound obtained for its derivative-based counterpart \cite{cartis2011adaptive_1,cartis2011adaptive_2}.   

In the context of multi-objective optimization, the WCC of algorithms has gained more and more attention in recent years. The first work (to the best of our knowledge) on WCC of a multi-objective method appeared in \cite{fliege2019complexity}. There, it is proved that the multi-objective gradient descent method as the same ${\cal O}(\eps^{-2})$ WCC of the steepest descent algorithm for unconstrained single-objective optimization. It is worth noting that in \cite{fliege2019complexity}, the analyzed algorithm is not intended to reconstruct an approximation of the Pareto front (i.e. producing a set of non-dominated solutions), rather it produces a single point which is (in the limit) a Pareto stationary point of the problem. 
In \cite{calderon2022high} constrained multi-objective problems are considered and a method using high-order models is studied. A WCC of ${\cal O}(\eps^{-(p+1)/p})$ is proved which aligns with analogous results in the single-objective scenario. Again, the method proposed in \cite{calderon2022high} is aimed at producing a single non-dominated point rather than reconstructiog and approximation of the (true) Pareto front. More recently, in \cite{lapucci2024convergence}, the converge and WCC of a wide class of methods for unconstrained multi-objective problems are considered. A general WCC of ${\cal O}(\eps^{-\max\{p_1,2(p_1-p_2)\}})$ is proved where $p_1$ and $p_2$ are connected with the assumptions on the search directions. It is also showed that when $p_2=1$ and $p_1 = 2$ the algorithm has iteration and evaluation complexity of ${\cal O}(\eps^{-2})$. In \cite{lapucci2024effective}, following ideas proposed in \cite{custodio2021worst}, the WCC of algorithms producing set of points rather than a single point per iteration is proved. The WCC bound is ${\cal O}(\eps^{-2q})$ where $q$ is the number of objective functions. When derivatives are not available or impractical to obtain, in
\cite{custodio2021worst} WCC has been studied for the first time for an algorithm producing sets of non-dominated points. In particular, the DMS algorithm has WCC of ${\cal O}(|L(\epsilon)|\epsilon^{-2q})$ number of iterations (${\cal O}(n|L(\epsilon)|\epsilon^{-2q})$ function evaluations) to produce a set of points such that the criticality measure is below $\epsilon$ for at least one point in the list (where $L(\epsilon)$ represents the cardinality of the set of linked sequences between the first unsuccessful iteration and the iteration immediately before the one where the criticality condition is satisfied).    

In this paper we are particularly interested in derivative-free algorithms that produce a set of non-dominated points, namely the so-called {\em a posteriori} methods. We build upon the recent paper \cite{custodio2021worst} and study the convergence and complexity of derivative-free algorithms based on extrapolation techniques. 


\subsection{Our contribution and structure of the paper}
This paper is concerned with the complexity analysis of {\em a posteriori} methods for the solution of problem \eqref{prob}. We present two main algorithms, namely \DFMOnew\ and \DFMOlight. In particular, \DFMOnew, requires a complete exploration of the points in the current set of non-dominated solutions, whereas \DFMOlight\ only require the exploration around a single point in the set of non-dominated solutions. 
We derive worst-case iteration and evaluation complexity bounds for \DFMOnew\ and \DFMOlight. 

Furthermore, we consider two special instances of \DFMOlight, namely \DFMOmin\ and \DFMOmax, and derive worst-case complexity bounds for them both. In the concluding section we compare the bounds obtained for the various algorithms with the aim of highlighting their differences and respective advantages.  



\par\smallskip

The paper is organized as follows.
In section \ref{sec:notation} we introduce the notation that will be used in the paper and we introduce some useful preliminary results.
In section \ref{sec:DFMOstrong} we define algorithm \DFMOnew\ and carry out its theoretical analysis.
In Section \ref{sec:DFMOlight} we describe algorithm \DFMOlight\ and study its worst-case complexity.
Section \ref{sec:special_instances} is devoted to the description and analysis of two special instances of \DFMOlight, namely \DFMOmin\ and \DFMOmax.
Finally, in section \ref{sec:conc} we draw some conclusions.

\section{Notations and preliminary material}\label{sec:notation}

Given a vector $v\in\Re^n$, a subscript will be used to denote either one of its components ($v_i$) or the fact that it is an
element of an infinite sequence of vectors ($v_k$). In case of possible misunderstanding or ambiguities, the $i$th component of a
vector will be denoted by $(v)_i$. 

Given two vectors $u,v\in\Re^n$, we use the following convention for vector equalities and inequalities:
\begin{eqnarray*}
 u = v & \Leftrightarrow & u_i = v_i,\ \forall\ i=1,\dots,n,\\
 u < v     & \Leftrightarrow & u_i < v_i,\ \forall\ i=1,\dots,n,\\
 u \leq v  & \Leftrightarrow & u_i \leq v_i,\ \forall\ i=1,\dots,n,\ \mbox{and}\ u\neq v.
\end{eqnarray*}
Note that $u \geq v$ if and only if $-u \leq -v$.

We denote by $v^j$ the generic $j$th element of a finite set of vectors. Furthermore, vectors $e_1,\dots,e_n$ represent the coordinate unit vectors.
Finally, by $\one$ we denote the vector of all ones of dimension $q$. 
The methods we shall study do not require explicit knowledge of first order derivatives of the objective functions. However, the analysis hinges on the following assumptions concerning differentiability and boundedness of the objective functions 
(see \cite{custodio2021worst}). 
 
\begin{ass}\label{ass:lipschitzcont}
For all $i=1,\dots,q$, the function $f_i$ is continuously differentiable with a Lipschitz continuous gradient with constant $L_i$. Furthermore, let $L_{\max} = \max_{i=1,\dots,q}L_i$.
\end{ass}

\begin{ass}\label{ass:boundedness}
For all $i=1,\dots,q$, and given $x_0\in\Re^n$, it holds that
\[
\begin{split}
&-\infty < f_i^{\min} = \begin{array}[t]{l}\inf\ f_i(x) \\\quad x\in\Re^n: F(x_0)\not\leq F(x),
\end{array}    \\
&+\infty > f_i^{\max} = \begin{array}[t]{l}\sup\ f_i(x) \\\quad x\in\Re^n: F(x_0)\not\leq F(x).
\end{array}    
\end{split}
\]
Let us denote $f_{\max} = (f_1^{\max},\dots,f_q^{\max})^\top\in\Re^q$ and $f_{\min} = (f_1^{\min},\dots,f_q^{\min})^\top\in\Re^q$.
     
\end{ass}
\par\smallskip

When dealing with several objective functions at a time, the concept of Pareto dominance is usually considered in the comparison of two points.

\par\medskip\noindent

\begin{definition}[Pareto dominance]
Given two points, $x,y\in\Re^n$, we say that $x$ dominates $y$ if $F(x)\leq F(y)$.
\end{definition}
\par\smallskip\noindent

The above definition of Pareto dominance is used
to characterize global and local optimality into a multi-objective framework. More specifically, by means of the following
two definitions, we are able to identify a set of non-dominated points (the so called Pareto front or
frontier) which represents the (global or local) optimal solutions of a given multi-objective problem.

\par\medskip\noindent

\begin{definition}[Global Pareto optimality]
A point $x^\star\in\Re^n$ is a global Pareto optimizer of Problem (\ref{prob}) if it does not exist a point $y\in\cal F$ such
that $F(y)\leq F(x^\star)$.
\end{definition}

\begin{definition}[Local Pareto optimality]
A point $x^\star\in\cal F$ is a local Pareto optimizer of Problem (\ref{prob}) if it does not exist a point $y\in\{x\in\Re^n:\ \|x-x^\star\|<\eps\}$ such that $F(y)\leq F(x^\star)$, for some $\eps > 0$.
\end{definition}

\par\medskip\noindent

Now, we can give the following definition of Pareto criticality which is a necessary condition for local/global Pareto optimality.

\begin{definition}[Pareto criticality]
A point $\bar x\in\Re^n$ is Pareto critical for problem \eqref{prob} if for all $d\in\Re^n$, $d\neq 0$, an index $i_d\in\{1,\dots,q\}$ exists such that
\[
\nabla f_{i_d}(\bar x)^\top d \geq 0.
\]
\end{definition}

For a given point $x\in\Re^n$, let us introduce the following Pareto criticality measure
\begin{equation*}
\begin{split}
\mu(x) & = -\min_{\|d\|\leq 1}\max_{i\in\{1,\dots,q\}}\nabla f_i(x)^\top d\\
{\cal F}(x) & = \argmin_{\|d\|\leq 1}\max_{i\in\{1,\dots,q\}}\nabla f_i(x)^\top d
\end{split}
\end{equation*}
\begin{lemma}[{\cite[Lemma 3]{fliege2000steepest}}]
    For a given $x\in\Re^n$, $\mu(x)$ is such that
    \begin{enumerate}
        \item $\mu(x)\geq 0$;
        \item if $x$ is Pareto critical, then $0\in{\cal F}(x)$ and $\mu(x) = 0$;
        \item if $x$ is not Pareto critical, then  $\mu(x) > 0$ and for any $d\in{\cal F}(x)$ we have
        \[
        \nabla f_j(x)^\top d < 0,\qquad\forall\ j\in\{1,\dots,q\};
        \]
        \item the function $x\mapsto \mu(x)$ is continuous;
        \item if $x_k\to\bar x$, $d_k\in {\cal F}(x_k)$ and $d_k\to\bar d$, then $\bar d\in {\cal F}(\bar x)$.
    \end{enumerate}
\end{lemma}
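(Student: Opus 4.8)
The plan is to reason throughout in terms of the auxiliary function
\[
g(x,d) := \max_{i\in\{1,\dots,q\}}\nabla f_i(x)^\top d,
\]
so that $\mu(x) = -\min_{\|d\|\leq 1} g(x,d)$ and ${\cal F}(x) = \argmin_{\|d\|\leq 1} g(x,d)$. First I would record the basic structural facts that make the statement well posed: for fixed $x$, the map $d\mapsto g(x,d)$ is convex (a pointwise maximum of linear functions) and positively homogeneous, and the unit ball is nonempty and compact; hence the minimum defining $\mu(x)$ is attained and ${\cal F}(x)\neq\emptyset$.

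Items (1)--(3) are then essentially bookkeeping. Since $d=0$ is feasible and $g(x,0)=0$, we get $\min_{\|d\|\leq 1} g(x,d)\leq 0$, i.e. $\mu(x)\geq 0$, which is (1). For (2), if $x$ is Pareto critical then for each $d\neq 0$ there is $i_d$ with $\nabla f_{i_d}(x)^\top d\geq 0$, so $g(x,d)\geq \nabla f_{i_d}(x)^\top d\geq 0$; combined with $g(x,0)=0$ this shows the minimum equals $0$ and is attained at $d=0$, hence $0\in{\cal F}(x)$ and $\mu(x)=0$. For (3) I would argue from the negation of Pareto criticality: there is $\hat d\neq 0$ with $\nabla f_i(x)^\top\hat d<0$ for all $i$; rescaling $\hat d$ to unit norm (allowed by positive homogeneity of $g(x,\cdot)$) keeps all these products negative, so $\mu(x)=-\min_{\|d\|\leq 1}g(x,d)\geq -g(x,\hat d/\|\hat d\|)>0$. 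Moreover, for any $d\in{\cal F}(x)$ we have $\max_i\nabla f_i(x)^\top d = g(x,d) = -\mu(x)<0$, so in particular $\nabla f_j(x)^\top d\leq -\mu(x)<0$ for every $j$.

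The core of the argument is (4), after which (5) is immediate. For (4) I would exploit the elementary bound $|\max_i a_i-\max_i b_i|\leq\max_i|a_i-b_i|$ together with Assumption \ref{ass:lipschitzcont}: for every $d$ with $\|d\|\leq 1$,
\[
|g(x,d)-g(y,d)| \leq \max_i \big|(\nabla f_i(x)-\nabla f_i(y))^\top d\big| \leq \max_i \|\nabla f_i(x)-\nabla f_i(y)\| \leq L_{\max}\|x-y\|.
\]
Evaluating at a minimizer of $g(y,\cdot)$ and then interchanging the roles of $x$ and $y$ gives $|\mu(x)-\mu(y)|\leq L_{\max}\|x-y\|$, so $\mu$ is in fact (globally) Lipschitz, hence continuous. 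For (5), take $x_k\to\bar x$, $d_k\in{\cal F}(x_k)$, $d_k\to\bar d$; then $\|\bar d\|\leq 1$, and joint continuity of $g$ (which follows from continuity of the $\nabla f_i$) together with (4) yields
\[
g(\bar x,\bar d) = \lim_{k\to\infty} g(x_k,d_k) = \lim_{k\to\infty}\big(-\mu(x_k)\big) = -\mu(\bar x) = \min_{\|d\|\leq 1} g(\bar x,d),
\]
so $\bar d\in{\cal F}(\bar x)$.

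I expect the only point requiring any real care to be the uniform-in-$d$ estimate underlying (4): one must bound $|g(x,d)-g(y,d)|$ uniformly over the unit ball \emph{before} passing to the infimum over $d$. The Lipschitz-gradient hypothesis makes even this routine, and everything else is a direct consequence of convexity, positive homogeneity, and compactness of the unit ball; a fully rigorous treatment can also be phrased as an application of Berge's maximum theorem to the value function $x\mapsto\min_{\|d\|\leq 1} g(x,d)$, but the explicit Lipschitz estimate is shorter and self-contained.
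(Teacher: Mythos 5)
Your proof is correct. Note that the paper does not actually prove this lemma: it is quoted verbatim from Fliege and Svaiter (Lemma 3 of the cited reference), so there is no in-paper argument to compare against. Your self-contained treatment via the value function $g(x,d)=\max_i\nabla f_i(x)^\top d$ — convexity and positive homogeneity in $d$, feasibility of $d=0$ for (1)--(2), the negation of Pareto criticality for (3), the uniform-over-the-unit-ball estimate $|g(x,d)-g(y,d)|\leq L_{\max}\|x-y\|$ for (4), and joint continuity of $g$ plus (4) for (5) — is a standard and valid route, and it even yields the slightly stronger conclusion that $\mu$ is globally Lipschitz under Assumption \ref{ass:lipschitzcont}. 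The only remark worth making is that items (4)--(5) hold already under mere continuity of the gradients (as in the original reference, e.g.\ via Berge's maximum theorem or a local uniform estimate), so invoking the Lipschitz constant is a convenience permitted by the paper's standing assumptions rather than a necessity; this does not affect correctness here.
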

\par\medskip

Given a set of unitary directions $D_k$, we define the following approximation $\mu_{D_k}(x)$ of $\mu(x)$.
\[
\mu_{D_k}(x) = -\min_{d\in D_k}\max_{i=1,\dots,q}\nabla f_i(x)^\top d.
\]

Then, concerning the approximation $\mu_{D_k}(x)$, we introduce the following assumption.

\begin{ass}\label{ass:assumption_mu}
Given sequences of points and sets of directions $\{x_k\}$, $\{D_k\}$, there exists a constant ${\cal C}> 0$ such that
\[
|\mu(x_k)-\mu_{D_k}(x_k)| \leq {\cal C}\mu_{D_k}(x_k)\qquad \text{for all}\ k\in\mathbb{N}.
\]
Furthermore, $|D_k| \leq r = {\cal O}(n)$, for all $k\in\mathbb{N}$.
\end{ass}
\par\smallskip

We require Assumptions \ref{ass:lipschitzcont},  \ref{ass:boundedness} and \ref{ass:assumption_mu} to hold true throughout the paper (hence they will not be explicitly invoked).

\par\smallskip\noindent

\begin{remark}
As noted in \cite{custodio2021worst}, the above assumption implicitly requires that $\mu_{D_k}(x_k)>0$ for all $k$ {such that $x_k$ is not Pareto critical}, i.e. a direction $d\in D_k$ exists which is a descent direction with respect to all the objective functions. This, of course, might not be. In such cases, additional directions could be added to
the set $D_k$ to satisfy the assumption.

Moreover, it is simple to show that in the single objective case the assumption can be easily satisfied by choosing $D_k$ to be sets of unitary directions with a cosine measure uniformly bounded away from zero, i.e. $\sigma(D_k)\geq \sigma > 0$ where 
\[
\sigma(D_k) = \min_{v\in\Re^n}\max_{d\in D_k} \frac{v^\top d}{\|v\|}.
\]
In fact, recalling the definition of cosine measure we can write
\[
\sigma \leq \sigma(D_k) \leq \max_{d\in D_k}- \frac{\nabla f(x_k)^\top d}{\|\nabla f(x_k)\|} = -\min_{d\in D_k} \frac{\nabla f(x_k)^\top d}{\|\nabla f(x_k)\|} =\frac{\mu_{D_k}(x_k)}{\|\nabla f(x_k)\|},
\]
so that, noticing that $\mu(x_k) = \|\nabla f(x_k)\|$ when $q=1$, we have
\[
\mu(x_k) = \|\nabla f(x_k)\| \leq \frac{1}{\sigma}\mu_{D_k}(x_k).
\]
Then, we obtain
\[
|\mu(x_k) -\mu_{D_k}(x_k)| \leq \left(\frac{1}{\sigma} + 1\right)\mu_{D_k}(x_k)
\]
and the assumption is satisfied with ${\cal C} = 1+1/\sigma$.

The above discussion also justifies why we are requiring $|D_k| = r \leq {\cal O}(n)$. In fact, a set $D_k$ with strictly positive cosine measure  has between $n+1$ and $2n$ directions. In the multi-objective case, having a strictly positive cosine measure could still be insufficient to satisfy Assumption \ref{ass:assumption_mu}. We are thus requesting the assumption to be satisfied with a number of directions of the order of $n$, thus possibly more than $2n$.
\end{remark}

\par\medskip

Then, we can give the definition of hypervolume of a set of points with respect to some reference point.
\begin{definition}[{\cite[Definition 3.1]{custodio2021worst}}] The hypervolume indicator for some set $A\subset\Re^q$ and a reference point $\rho \in \Re^q$
that is dominated by all the points in $A$ is defined as:
\[
HI(A) = \vol\{b \in \Re^q : b \leq \rho\ \text{and}\ \exists\ a \in A : a \leq b\} = \vol\left(\bigcup_{a\in A}[a,\rho]\right),
\]
where $[a,\rho] = \{y\in\Re^q:\ a_i\leq y_i\leq \rho_i,\ i=1,\dots,q\}$.
\end{definition}
\par\smallskip

Note that Assumption \ref{ass:boundedness} guarantees that the hypervolume of the set of Pareto points of the problem \eqref{prob} is bounded from above, i.e. a constant $\overline{HI}$ exists such that
\[
HI(F({\cal P})) \leq \overline{HI},
\]
where ${\cal P}\subseteq\Re^n$ is the set of Pareto optimizers of Problem \eqref{prob}.
\par\smallskip

We end this subsection by stating a proposition which will be useful in the analysis of the complexity of the algorithms. In particular, it evidences an interesting property concerning sufficient increase of the Hypervolume indicator.
\par\medskip\noindent
{
\par\medskip\noindent
\begin{proposition}\label{HIincrease_gen}
Let $Y\subseteq\Re^n$ be a set of points, and let the reference point be $\rho = f_{\max}+s\mathbf{1}$ (where $\mathbf{1}$ is the vector of all ones of appropriate dimension and $s>0$ a  sufficiently large scalar parameter). 
\begin{itemize}
\item[i)]
A point $y\in\Re^n$ is not dominated by any point $x\in Y$, i.e. 
\begin{equation}\label{eq:nondomnew1}
F(x) \not\leq F(y),\ F(y)\neq F(x) 
\qquad\forall\ x\in Y,
\end{equation}
if and only if
\[
HI(F(Y\cup\{y\})) - HI(F(Y)) >0. 
\]
\item[ii)] If the point $y\in\Re^n$ and the scalar $\nu>0$ are such that 
\begin{equation*}
F(y) \not > F(x) - \nu\mathbf{1} \qquad\forall\ x\in Y.
\end{equation*}
 then,
\[
HI(F(Y\cup\{y\})) - HI(F(Y)) \geq \nu^q. 
\]
 
\end{itemize}
\end{proposition}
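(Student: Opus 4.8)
The plan is to work directly from the definition $HI(F(Z)) = \vol\left(\bigcup_{z\in Z}[F(z),\rho]\right)$ and to analyze the set difference
\[
\Delta = \bigcup_{z\in Y\cup\{y\}}[F(z),\rho]\ \setminus\ \bigcup_{x\in Y}[F(x),\rho] = [F(y),\rho]\setminus\bigcup_{x\in Y}[F(x),\rho],
\]
so that $HI(F(Y\cup\{y\})) - HI(F(Y)) = \vol(\Delta)$, using that the union over $Y$ is contained in the union over $Y\cup\{y\}$ and that volume is additive on the disjoint decomposition of the larger set. The first order of business is to make sure $[F(y),\rho]$ is itself well defined and nonempty, i.e.\ that $F(y)\leq\rho$ componentwise; this is where Assumption \ref{ass:boundedness} and the choice $\rho = f_{\max}+s\mathbf{1}$ enter — for points $y$ relevant to the algorithm (those not dominated by $F(x_0)$) we have $F(y)\leq f_{\max}< \rho$, and for the remaining points the box is empty and both sides behave consistently. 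I would remark on this once and then assume $[F(y),\rho]$ nonempty.

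For part (i), the reverse implication is immediate: if $\vol(\Delta)>0$ then $\Delta\neq\emptyset$, so there is a point $b$ with $F(y)\leq b\leq\rho$ but $b\notin[F(x),\rho]$ for every $x\in Y$; for each such $x$, $b\geq F(x)$ fails, which combined with $b\geq F(y)$ forces $F(x)\not\leq F(y)$ (and in particular $F(y)\neq F(x)$), giving \eqref{eq:nondomnew1}. For the forward implication, suppose \eqref{eq:nondomnew1} holds. The key point is to exhibit a full-dimensional subset of $[F(y),\rho]$ disjoint from every $[F(x),\rho]$, $x\in Y$. For a fixed $x\in Y$, non-domination $F(x)\not\leq F(y)$ gives an index $j$ with $f_j(x)>f_j(y)$ (the case $F(y)=F(x)$ being excluded, and if $F(x)$ and $F(y)$ are incomparable such a $j$ trivially exists; if $F(y)\leq F(x)$ strictly then likewise). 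Actually the clean uniform statement is: for every $x\in Y$ there is $j_x$ with $f_{j_x}(y)<f_{j_x}(x)$, so the open half-box $\{b: F(y)\leq b,\ b_{j_x}<f_{j_x}(x)\}$ misses $[F(x),\rho]$. Intersecting finitely many such constraints with $[F(y),\rho]$ still leaves a set containing a small box around $F(y)$ of positive volume — here one uses that $F(y)<\rho$ strictly in every component, so there is room to move. Hence $\vol(\Delta)>0$.

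For part (ii), the hypothesis $F(y)\not> F(x)-\nu\mathbf{1}$ for all $x\in Y$ means: for every $x\in Y$ there exists an index $j_x$ with $f_{j_x}(y)\leq f_{j_x}(x)-\nu$. The plan is to produce an explicit axis-aligned cube of side $\nu$ sitting inside $\Delta$. Consider the box $B = \prod_{i=1}^q [f_i(y),\, f_i(y)+\nu]$. First, $B\subseteq[F(y),\rho]$: the lower bound is clear, and for the upper bound $f_i(y)+\nu\leq\rho_i$ we again invoke the choice of $\rho$ with $s$ large (taking $s\ge\nu$, or noting $f_i(y)\le f_i^{\max}$ and $s$ large). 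Second, $B$ is disjoint from $[F(x),\rho]$ for each $x\in Y$: any $b\in B$ has $b_{j_x}\leq f_{j_x}(y)+\nu\leq f_{j_x}(x)$, with equality throughout only on a measure-zero face, so $b\not\geq F(x)$ except possibly on the boundary; thus the interior of $B$ lies in $\Delta$. Since $\vol(B)=\nu^q$ and removing the boundary does not change volume, $\vol(\Delta)\geq\nu^q$, which is the claim.

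The main obstacle is the forward direction of part (i): translating the combinatorial non-domination conditions \eqref{eq:nondomnew1} — which are statements about finitely many $x\in Y$, each yielding only \emph{some} coordinate where $F(x)$ exceeds $F(y)$, possibly a different coordinate for each $x$ — into the existence of a \emph{single} region of positive Lebesgue measure avoiding all the boxes $[F(x),\rho]$ simultaneously. The resolution is that each individual box $[F(x),\rho]$, when restricted to $[F(y),\rho]$, is cut off by the single inequality $b_{j_x}\geq f_{j_x}(x) > f_{j_x}(y)$, so its complement within $[F(y),\rho]$ is an open (nonempty, full-dimensional) set; a finite intersection of open dense-enough sets — more concretely, the box $\prod_i [f_i(y), f_i(y)+\delta]$ for $\delta$ smaller than $\min_{x\in Y}(f_{j_x}(x)-f_{j_x}(y))$ and smaller than $\min_i(\rho_i - f_i(y))$ — is contained in $\Delta$ and has positive volume. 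Part (ii) is essentially this same cube argument run with the uniform gap $\nu$ in place of $\delta$, which is why the two parts are stated together; once the geometric picture is fixed, the calculations are routine.
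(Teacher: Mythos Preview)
Your proposal is correct and follows essentially the same route as the paper. The paper (in its appendix) proves both directions of (i) by the same ``small cube at $F(y)$'' construction: for the forward direction it sets $\varepsilon = \min_j\min_{x\in Y_j}\bigl(f_j(x)-f_j(y)\bigr)$ and shows the box $B=\prod_i[f_i(y),f_i(y)+\varepsilon]$ has interior disjoint from $\bigcup_{x\in Y}[F(x),\rho]$ while $B\subseteq [F(y),\rho]$ (using $s$ large), exactly your $\delta$-box argument; part (ii) is then obtained by re-running that construction with $\varepsilon=\nu$, which is precisely your cube $B=\prod_i[f_i(y),f_i(y)+\nu]$. Your reverse implication in (i) is actually a shade more direct than the paper's (you pick a single point $b\in\Delta$ and read off $f_{j}(y)\le b_j<f_j(x)$, whereas the paper first extracts a small box around an interior point of $\Delta$), but the substance is identical.
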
   
\begin{proof}
Point (i) is proved in Proposition \ref{HIincrease_gen_new}  {of Appendix A}. As regards proof of point (ii), it follows from the proof of the only if part in Proposition \ref{HIincrease_gen_new} where $\eps = \nu$. 
\end{proof}
}
\par\medskip\noindent

\section{A derivative-free multi-objective algorithm with strong properties}\label{sec:DFMOstrong}\ \par

In the following we report a description of our proposed algorithm for derivative-free multi-objective optimization. 

\DFMOnew\ belongs to the class of {\em a posteriori} methods for multi-objective optimization as it produces a list of non-dominated points. Like other algorithms proposed in the literature (see e.g. DMS \cite{custodio2011direct}, DFMO \cite{liuzzi2016derivative}), the current state of \DFMOnew\ is represented by a list of pairs of points and stepsizes $L_k$ rather than a single point and its associated stepsizes. Then, the aim of an iteration of \DFMOnew\ is to improve the {\em quality} of $L_k$. 
Since we are in a multi-objective context, the quality of $L_k$ can be improved either by adding to $L_k$ one point that dominates some points already in the list; or by increasing the cardinality of $L_k$, that is by adding new non-dominated points to the list. \DFMOnew\ tries to accomplish this two-fold task by performing an exploration around all the points in $L_k$ using the directions belonging to the current set of directions $D_k {= \{d_k^1,\dots,d_k^r\}}$. In particular, let {$x_k\in L_k$ be  the  point currently considered by the algorithm and $X(x_k)$ be the associated} current set of points, i.e. $X(x_k) = \{x_j: (x_j,\tilde\alpha_j)\in\tilde L_k\}$, where $\tilde L_k$ denotes the current list of pairs of points and the corresponding stepsizes {obtained by adding to set $L_k$ the pairs produced by the points (of $L_k$) previously considered by the algorithm at the iteration $k$.} The {\tt Approximate\_optimization} procedure produces $r+1$ sets of points $Y^i(x_k)$, $i=1,\dots,r+1$, such that
\[
X(x_k) = Y^1(x_k) \subseteq Y^2(x_k)\subseteq\dots\subseteq Y^{r+1}(x_k)=Y(x_k).
\]
Each set $Y_k^{i+1}$ is obtained by means of an expansion along $d_k^{i}$ starting from $y_k^{i}$. Specifically, $y_k^1 = x_k$ and, for all $i=1,\dots,r$, $y_k^{i+1}$ is either $y_k^i+\alpha_k^id_k^i$ or $y_k^i$ depending on whether new sufficiently non-dominated points have been added to $Y^{i+1}(x_k)$ or not.
During the expansion, new points are added to the list provided that they are sufficiently non-dominated by any point already in the list, i.e. 
\[
F(y_k^{i}+\alpha d_k^{i}) \not > F(x_j) -\gamma(\alpha)^2\mathbf{1},\qquad \forall\ x_j\in Y^{i+1}(x_k).
\]

The expansion cycle stops when the tentative point is sufficiently dominated by one point belonging to the current set $Y^{i+1}(x_k)$, i.e. when
\[
F(y_k^{i}+\beta d_k^{i})  > F(x_j) -\gamma(\beta)^2\mathbf{1},\qquad \text{for at least an}\ x_j\in Y^{i+1}(x_k);
\]
If the set of points $X(x_k)$ at the beginning of the exploration around $x_k$ is equal to the set of points $Y(x_k)$ produced by the {\tt Approximate\_optimization} procedure, i.e. when $Y(x_k)=X(x_k)$, the exploration fails and the step sizes associated with $x_k$ are decreased by a constant factor. Otherwise, the points in $Y(x_k)$ are added to a temporary list of produced points each one associated with the lastly produced stepsizes.   



At the end of the iteration, namely after exploration of all the points in $L_k$, the new list $L_{k+1}$ is generated by choosing the pairs corresponding to the non-dominated points in the temporary list $\tilde L_k$. An iteration $k$ of the algorithm, i.e. the exploration of all the points in $X_k$, is {\em successful} when $X_k\neq X_{k+1}$ (it is unsuccessful when $X_k = X_{k+1}$).


\begin{algorithm}[ht!]
\caption{\DFMOnew}
\begin{algorithmic}[1]
\State   {\bf data.} $\theta \in (0,1)$, $r\in\mathbb{N}$, $r>0$, a sequence $\{D_k\}$, $D_k\in \Re^{n\times r}$ for all $k$, $L_0=\{(x_j,\tilde \alpha_j),\ x_j \in \Re^n,\ \tilde\alpha_j\in \Re^r_{+},\ j = 1,\dots,|L_0| \}$, $\bar\Delta_0 \leftarrow \displaystyle\max_{(x_j,\tilde\alpha_j)\in  L_0}\displaystyle\max_{i=1,\dots,r}\tilde\alpha_j^i$

\For{$k=0,1,\dots$}
\State set $\tilde L_k \leftarrow L_k$, $X_k\leftarrow\{x_j: (x_j,\tilde\alpha_j)\in L_k  \}$
\ForAll{$(x_k,\tilde\alpha_k)\in L_k$}
\State set $X(x_k)\leftarrow\{x_j: (x_j,\tilde\alpha_j)\in \tilde L_k  \}$

\State compute $Y(x_k)$ and $\alpha_k$ by the 
\par\phantom{7:}\qquad \texttt{Approximate Optimization} $(x_k,\tilde \alpha_k,X(x_k),c\bar\Delta_k,D_k)$ 

\If{$X(x_k) = Y(x_k)$}
\State set $\tilde\alpha_{k+1}^i \leftarrow \theta\alpha_k^i$ for all $i=1,\dots,r$

\State set $\tilde L_k \leftarrow (\tilde L_k\setminus\{(x_k,\tilde\alpha_k)\})\cup \{(x_k,\tilde\alpha_{k+1})\}$

\Else

\State set $\tilde\alpha_{k+1}^i \leftarrow \alpha_k^i$ for all $i=1,\dots,r$

\ForAll{$y\in Y(x_k)$}

\State set $\tilde L_k \leftarrow \tilde L_k\cup \{(y,\tilde\alpha_{k+1})\}$

\EndFor
\EndIf
\EndFor

\State set $\bar\Delta_{k+1} \leftarrow \displaystyle\max_{(x_j,\tilde\alpha_j)\in \tilde L_k}\displaystyle\max_{i=1,\dots,r}\tilde\alpha_j^i$

\State set $L_{k+1}$ the non-dominated points in  $\tilde L_k$

\EndFor

\end{algorithmic}
\end{algorithm}

\begin{algorithm}
\caption{{\tt Approximate Optimization} ($x,\tilde\alpha,X(x),\bar\nu,D$)}
\begin{algorithmic}[1]
\State {\bf data}. $\delta\in(0,1)$, $\gamma\in(0,1)$
\State set $y^1 \leftarrow x$, $Y^1(x)\leftarrow X(x)$
\Statex {\em (explore the directions in $D=\{d^1,\dots,d^r\}$)}
\For{$i=1,\dots,r$ }
\State select $d^i\in D$
\State set $\nu^i \leftarrow \max\{\tilde\alpha^i, \bar\nu\}$, $Y^{i+1}(x)\leftarrow Y^i(x)$, $y^{i+1}\leftarrow y^i$, $\alpha^i \leftarrow \nu^i$, $\beta \leftarrow \nu^i$
\While{$\Big(\!F(y^i+\beta d^i) \not >  F(x_j) -\gamma \beta^2\one\!$\Big) \ $\forall\ x_j\in Y^{i+1}(x)$}
\State set $\alpha \leftarrow \beta$, $Y^{i+1}(x)\leftarrow Y^{i+1}(x) \cup \{y^i+\alpha d^i\}$, $\beta \leftarrow \alpha/\delta$

\State set $y^{i+1} \leftarrow y^i+\alpha d^i$, $\alpha^i \leftarrow \alpha$

\EndWhile
\EndFor

\State {\bf return} $Y^{r+1}(x)$ and $\alpha$
\end{algorithmic}
\end{algorithm}

\label{sec:convergence}

\subsection{Preliminary results}\label{sec:stationarity}
\ \par
In this subsection we report some preliminary results that will be used in the worst-case analysis. Specifically, we have three main results:
\begin{enumerate}
    \item the first one relates the measure of stationarity of every point $x_k$ in the list $L_k$ with the maximum of its associated stepsizes;
    \item the second result is about hypervolume improvement; in particular, we prove that the hypervolume of the list of points in successful iterations sufficiently improves by a quantity proportional to the maximum stepsize of all points to the power of $q$;
    \item finally, we show that \DFMOnew\ always sufficiently improves the value of an auxiliary function $\Phi_k$, which is used in the worst-case complexity analysis and which can be used to measure progress of the algorithm.
\end{enumerate}

First of all, we show that the {\tt Approximate Optimization} procedure is well-defined, i.e. it cannot infinitely cycle in the inner {\tt while} loop.
\par\smallskip

\begin{lemma}
The {\tt Approximate Optimization} procedure is well-defined, i.e. the inner {\tt while} loop cannot infinitely cycle for every $i\in\{1,\dots,r\}$.     
\end{lemma}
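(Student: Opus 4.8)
The plan is to argue by contradiction: suppose that for some index $i\in\{1,\dots,r\}$ the inner \texttt{while} loop never terminates. Then the test
\[
F(y^i+\beta d^i) \not > F(x_j) - \gamma\beta^2\one \qquad \forall\ x_j\in Y^{i+1}(x)
\]
holds at every pass, and since $\beta$ is updated as $\beta\leftarrow\alpha/\delta$ with $\delta\in(0,1)$, the sequence of values of $\beta$ generated inside the loop grows like $\nu^i/\delta^j$, hence $\beta\to+\infty$. Correspondingly the tentative point $y^i+\beta d^i$ has norm diverging to infinity along the fixed direction $d^i$ (which is unitary, so $\|y^i+\beta d^i\|\to\infty$).

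Next I would exploit Assumption~\ref{ass:boundedness}. The key observation is that the point $x$ passed to the procedure belongs to the current list, and all points ever produced by the algorithm are non-dominated with respect to $F(x_0)$ in the relevant sense, so $Y^{i+1}(x)$ always contains at least one point $x_j$ with $F(x_0)\not\leq F(x_j)$; in fact $x$ itself is such a point and $x\in X(x)=Y^1(x)\subseteq Y^{i+1}(x)$. Fix such an $x_j$. The loop condition, applied to this particular $x_j$, gives in particular that it is \emph{not} the case that $F(y^i+\beta d^i) > F(x_j) - \gamma\beta^2\one$, i.e.\ there exists a coordinate $\ell\in\{1,\dots,q\}$ (possibly depending on $\beta$) with
\[
f_\ell(y^i+\beta d^i) \leq f_\ell(x_j) - \gamma\beta^2 \leq f_\ell^{\max} - \gamma\beta^2,
\]
using $f_\ell(x_j)\leq f_\ell^{\max}$ from Assumption~\ref{ass:boundedness}. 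As $\beta\to+\infty$ the right-hand side tends to $-\infty$, so along the loop we would produce points at which some objective value goes to $-\infty$. This contradicts the lower bound $f_\ell(y^i+\beta d^i)\geq f_\ell^{\min}>-\infty$, provided we also know that $F(x_0)\not\leq F(y^i+\beta d^i)$ so that the infimum in Assumption~\ref{ass:boundedness} applies at these points.

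The one point requiring care — and the main obstacle — is precisely justifying that the tentative points $y^i+\beta d^i$ lie in the region $\{x:\ F(x_0)\not\leq F(x)\}$ over which $f_\ell^{\min}$ is a valid lower bound. I would handle this by noting that if $F(x_0)\leq F(y^i+\beta d^i)$ for some $\beta$, then $y^i+\beta d^i$ is dominated by $x_0$, hence (by the monotone non-dominance structure maintained by the algorithm, and since $x_0$ or a point dominating it is effectively represented) it would be dominated by some $x_j\in Y^{i+1}(x)$ in the strong $\gamma\beta^2$ sense for $\beta$ large enough, making the \texttt{while} test fail — again contradicting non-termination. Combining the two chains of implications, the \texttt{while} loop must terminate after finitely many iterations for every $i$, and since there are only $r$ outer iterations, the whole \texttt{Approximate Optimization} procedure is well-defined. \qed
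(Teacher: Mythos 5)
Your argument is correct in substance but follows a genuinely different route from the paper. The paper's proof is a hypervolume argument: by Proposition~\ref{HIincrease_gen}(ii), every pass of the \texttt{while} loop inserts a point that increases $HI(F(Y^{i+1}(x)))$ by at least $(\gamma\beta^2)^q$, and infinitely many such increases contradict the upper bound $\vol([f_{\min},\rho])$ available under Assumption~\ref{ass:boundedness}. You instead work directly with the acceptance test: since $\beta\to\infty$ and $x\in Y^1(x)\subseteq Y^{i+1}(x)$, each pass gives a coordinate $\ell$ with $f_\ell(y^i+\beta d^i)\le f_\ell(x)-\gamma\beta^2$, so some objective value diverges to $-\infty$, contradicting the lower bound in Assumption~\ref{ass:boundedness}. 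This is more elementary (no hypervolume machinery), and it hinges on the same implicit requirement as the paper's proof, namely that the relevant points lie in the region $\{z\in\Re^n:\ F(x_0)\not\leq F(z)\}$ on which $f^{\min}$ and $f^{\max}$ are valid bounds.

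The one step you should make precise is the assertion that ``all points ever produced by the algorithm are non-dominated with respect to $F(x_0)$'' together with ``$x_0$ or a point dominating it is effectively represented.'' The first claim is not literally true for tentative expansion points (and is not needed); the second is exactly the invariant your patch requires, and it does hold but deserves a proof: assuming $x_0$ belongs to the initial list (the natural reading of Assumption~\ref{ass:boundedness}), a short induction shows that the current list --- hence $X(x)\subseteq Y^{i+1}(x)$ --- always contains a point $z$ with $F(z)$ componentwise at most $F(x_0)$, because points are removed only through the non-dominance filter and dominance is transitive. With such a $z$ in hand your patch works, and in fact more simply than you state: if $F(x_0)\leq F(y^i+\beta d^i)$, then $F(y^i+\beta d^i)\geq F(z) > F(z)-\gamma\beta^2\one$ componentwise for \emph{every} $\beta>0$ (no need for $\beta$ large), so the \texttt{while} test fails at that pass. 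Hence along a nonterminating loop every tentative point stays in the region, the bound $f_\ell(y^i+\beta d^i)\geq f_\ell^{\min}$ applies, and the contradiction follows. With this invariant spelled out, your proof is complete and is a legitimate alternative to the paper's hypervolume argument.
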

\begin{proof}
By contradiction, let us suppose that and index $\bar\imath\in\{1,\dots,r\}$ exists such that the {\tt while} loop infinitely cycles. If this is the case, for all $j=1,2,\dots$, we have
\[
F(y^{\bar\imath}+\alpha/\delta^j d^{\bar\imath}) \not >  F(x_j) -\gamma (\alpha/\delta^j)^2\one \quad \forall\ x_j\in Y^{{\bar\imath}+1}(x)
\]
and the procedure considers the set $Y^{\bar\imath+1}(x)\cup\{y^{\bar\imath}+\alpha/\delta^j d^{\bar\imath}\}$. By Proposition \ref{HIincrease_gen}, we can write, for all $j=1,2,\dots$,
\[
HI(F(Y^{\bar\imath+1}(x)\cup\{y^{\bar\imath}+\alpha/\delta^j d^{\bar\imath}\})) - HI (F(Y^{\bar\imath+1}(x))) \geq \gamma(\alpha/\delta^j)^{2q}.
\]
The above relation, along with the definition of the \texttt{Approximate Optimization} procedure in which $Y^{\bar\imath+1}(x)$ is set equal to $Y^{\bar\imath+1}(x)\cup\{y^{\bar\imath}+\alpha/\delta^j d^{\bar\imath}\}$, contrasts with the hypervolume being upper bounded by $\vol([f_{\min},\rho])$ under Assumption \ref{ass:boundedness},  
thus concluding the proof.
\end{proof}
\par\smallskip

In the following proposition, by exploiting the {\tt Approximate optimization} procedure and in particular the expansion technique therein adopted, we can bound the approximate criticality measure, $\mu_{D}(x)$, to the maximum of the stepsizes produced by the {\tt Approximate optimization}.

To better understand the following theoretical result, it is worth noting that for each pair $(x_k, \tilde\alpha_k)\in L_k$ the algorithm \DFMOnew\  produces a set of points $Y_k$ and a vector $\alpha_k$ of stepsizes. 
\par\smallskip


\begin{proposition}\label{prop:boundmu}
Let $x_k\in X_k$ and $\tilde\alpha^i_{k+1}$, $i=1,\dots,r$, be the stepsizes produced by the {\tt Approximate optimization} procedure starting from $x_k$. Let  
\[
\Delta_{k+1} = \max_{i=1,\dots,r}\{\tilde\alpha^i_{k+1}\}.
\]
Then, the following bound on $\mu_{D_k}(x_k)$ holds true for all iterations $k$ and all $x_k\in X_k$
\[
\mu_{D_k}(x_k) \leq \left\{\begin{array}{ll}
     \left(\displaystyle\frac{2(L^{max} + \gamma)}{\delta(1-\delta)}   + L^{max}\sqrt{n}\right)\Delta_{k+1} = (c_1+L^{max}\sqrt{n})\Delta_{k+1}& \text{if}\ Y(x_k)\neq X(x_k) \\
     \left(\displaystyle\frac{L^{max} + 2\gamma}{2\theta} \right)\Delta_{k+1} = c_2\Delta_{k+1}& \text{if}\ Y(x_k) = X(x_k). 
\end{array}\right.
\]
\end{proposition}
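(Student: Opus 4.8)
The idea is to reduce the bound to a one-dimensional descent estimate along the direction $d^{i^\star}\in D_k$ that attains the minimum defining $\mu_{D_k}(x_k)$, so that $\mu_{D_k}(x_k)=-\max_{\ell=1,\dots,q}\nabla f_\ell(x_k)^\top d^{i^\star}$ and it suffices to exhibit an index $\ell_0$ with $\nabla f_{\ell_0}(x_k)^\top d^{i^\star}$ bounded below by a constant times $-\Delta_{k+1}$. The only analytic ingredient is the descent lemma: by Assumption~\ref{ass:lipschitzcont}, and since the directions are unitary, $|f_\ell(y+td)-f_\ell(y)-t\nabla f_\ell(y)^\top d|\le\frac{L_\ell}{2}t^2\le\frac{L^{max}}{2}t^2$ for all $y\in\Re^n$ and $t\ge 0$. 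I would split along the two cases of the statement.

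\emph{Case $Y(x_k)=X(x_k)$.} Here no point is accepted during the exploration of $x_k$, so $y^i=x_k$ and $Y^{i+1}(x_k)=X(x_k)$ for every $i$, and for $i=i^\star$ the inner {\tt while} loop does not even start: there is $x_j\in X(x_k)$ with $F(x_k+\nu^{i^\star}d^{i^\star})> F(x_j)-\gamma(\nu^{i^\star})^2\mathbf{1}$, where $\nu^{i^\star}=\max\{\tilde\alpha_k^{i^\star},c\bar\Delta_k\}$. Since $x_k$ is, and remains, non-dominated in $\tilde L_k$, the witness $x_j$ cannot dominate $x_k$, hence $f_{\ell_0}(x_j)\ge f_{\ell_0}(x_k)$ for some $\ell_0$ (trivially if $x_j=x_k$). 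Chaining $f_{\ell_0}(x_k+\nu^{i^\star}d^{i^\star})>f_{\ell_0}(x_j)-\gamma(\nu^{i^\star})^2\ge f_{\ell_0}(x_k)-\gamma(\nu^{i^\star})^2$ with the descent lemma and dividing by $\nu^{i^\star}>0$ gives $\nabla f_{\ell_0}(x_k)^\top d^{i^\star}>-(\frac{L^{max}}{2}+\gamma)\nu^{i^\star}$; since in this branch $\tilde\alpha_{k+1}^{i^\star}=\theta\nu^{i^\star}$, i.e. $\nu^{i^\star}\le\Delta_{k+1}/\theta$, one concludes $\mu_{D_k}(x_k)\le-\nabla f_{\ell_0}(x_k)^\top d^{i^\star}<\frac{L^{max}+2\gamma}{2\theta}\Delta_{k+1}=c_2\Delta_{k+1}$.

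\emph{Case $Y(x_k)\ne X(x_k)$.} The same mechanism is applied along $d^{i^\star}$, but now relative to the point $y^{i^\star}$ reached by the procedure (possibly $\ne x_k$), the estimate being transported back to $x_k$ at the end through $|\nabla f_{\ell_0}(x_k)^\top d^{i^\star}-\nabla f_{\ell_0}(y^{i^\star})^\top d^{i^\star}|\le L^{max}\|x_k-y^{i^\star}\|\le L^{max}\sqrt n\,\Delta_{k+1}$, the last bound coming from summing coordinatewise the per-direction expansion steps, each of norm $\le\Delta_{k+1}$. If at least one step was taken along $d^{i^\star}$, let $\alpha^{i^\star}=\tilde\alpha_{k+1}^{i^\star}$ be the last accepted stepsize and $\beta=\alpha^{i^\star}/\delta$ the rejected one; using the acceptance test of $y^{i^\star}+\alpha^{i^\star}d^{i^\star}$ against the rejection witness $x_j$ one reduces the rejection inequality to $f_{\ell_0}(y^{i^\star}+\beta d^{i^\star})>f_{\ell_0}(y^{i^\star}+\alpha^{i^\star}d^{i^\star})-\gamma\beta^2$ for a suitable $\ell_0$, and then, applying the descent lemma around $y^{i^\star}+\alpha^{i^\star}d^{i^\star}$, dividing by $\beta-\alpha^{i^\star}=\alpha^{i^\star}(1-\delta)/\delta>0$ and using $\alpha^{i^\star}\le\Delta_{k+1}$, one gets $\nabla f_{\ell_0}(y^{i^\star})^\top d^{i^\star}>-c_1\Delta_{k+1}$ (absorbing the $\frac{1}{\delta(1-\delta)}$ factors, and a lower-order $L^{max}\Delta_{k+1}$ term, into $c_1=\frac{2(L^{max}+\gamma)}{\delta(1-\delta)}$, using $\frac{1}{\delta(1-\delta)}\ge4$). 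If no step was taken along $d^{i^\star}$, the failed first test produces a witness $x_j$; since no point enters the list between the acceptance of $y^{i^\star}$ and this test, $x_j$ was already present when $y^{i^\star}$ was accepted (or $y^{i^\star}=x_k$), so the acceptance/non-domination of $y^{i^\star}$ gives $f_{\ell_0}(x_j)\ge f_{\ell_0}(y^{i^\star})$ for some $\ell_0$ and the descent lemma applies exactly as in the first case, again yielding $\nabla f_{\ell_0}(y^{i^\star})^\top d^{i^\star}>-c_1\Delta_{k+1}$. Combining with the transport inequality gives $\mu_{D_k}(x_k)\le(c_1+L^{max}\sqrt n)\Delta_{k+1}$.

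The step I expect to be the main obstacle is the combinatorial bookkeeping of the intermediate lists needed to control the dominance ``witness'', i.e.\ to guarantee that the $x_j$ returned by a failed acceptance test does not dominate the chosen reference point: this uses, in turn, that $x_k$ stays non-dominated in $\tilde L_k$, that a previously accepted iterate passed its own acceptance test against that $x_j$, and that no point enters the list between that acceptance and the failed test. Everything else --- the descent-lemma manipulations and the bound $\|x_k-y^{i^\star}\|\le\sqrt n\,\Delta_{k+1}$ --- is routine.
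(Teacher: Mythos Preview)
Your argument follows the paper's proof essentially step for step: both split into the unsuccessful and successful cases, use the failed acceptance test to produce a dominance ``witness'' $x_j$, invoke non-domination (or, equivalently, a previous acceptance test) of the reference point $y^i$ to extract a single index $\ell_0$ with $f_{\ell_0}(y^i)\le f_{\ell_0}(x_j)$, apply the Lipschitz descent inequality along $d^i$, and then transport the gradient estimate from $y^i$ back to $x_k$ via $\|x_k-y^i\|\le\sqrt{n}\,\Delta_{k+1}$. The only cosmetic difference is that the paper carries the estimate through for every direction $i=1,\dots,r$ and then invokes the definition of $\mu_{D_k}$, whereas you shortcut by fixing the minimizing direction $d^{i^\star}$ from the outset; the per-direction computations (including the integral form of the descent lemma and the absorption of constants into $c_1$) are otherwise identical.
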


\begin{proof}
Let us consider a point $x_k\in X_k$ and let us distinguish the following cases: (a) successful exploration ($Y(x_k)\neq X(x_k)$, i.e. $y_k^{r+1}\neq x_k$); (b) unsuccessful exploration ($Y(x_k) = X(x_k)$, i.e. $y_k^{r+1} = x_k$).

\begin{itemize}
    \item[(a)] 
    {For every $i=1,\dots,n$,} {we consider the two subcases: $Y^i(x_k) = Y^{i+1}(x_k)$, i.e. $y_k^i=y_k^{i+1}$,  and $Y^i(x_k)\neq Y^{i+1}(x_k)$, i.e. $y_k^i\neq y_k^{i+1}$.  
    
    If $Y^i(x_k)=Y^{i+1}(x_k)$ then $\hat x\in Y^i(x_k)$ exists such that 
    \begin{equation}\label{app1}
    F(y_k^i+\ {\tilde\alpha_{k+1}^i}d_k^i) >  F(\hat x) -\gamma ({\tilde\alpha_{k+1}^i})^2\one
    \end{equation}
    Since  $y_k^i\in Y_k^i(x_k)$ and  $y_k^i$ is not dominated by any point in $Y_k^i(x_k)$  then an index $j(i)$ exists such that
    $
      f_{j(i)}(y_k^i)<  f_{j(i)}(\hat x).
    $ 
    Therefore, recalling (\ref{app1}),
     \[
      f_{j(i)}(y_k^i + {\tilde\alpha_{k+1}^i} d_k^i) > f_{j(i)}(y_k^i) -\gamma({\tilde\alpha_{k+1}^i})^2
    \] 
Then, we can write
    \[
    \begin{split}
        0 &\leq f_{j(i)}(y_k^i + {\tilde\alpha_{k+1}^i} d_k^i) -f_{j(i)}(y_k^i) +\gamma({\tilde\alpha_{k+1}^i})^2  \\
        =& \int_0^1 \nabla f_{j(i)}(y_k^i + t{\tilde\alpha_{k+1}^i} d_k^i)^\top  {\tilde\alpha_{k+1}^i} d_k^i dt + \gamma({\tilde\alpha_{k+1}^i})^2
    \end{split}
    \]
    Adding $-{\tilde\alpha_{k+1}^i}\nabla f_{j(i)}(y_k^i)^\top d_k^i$ to both sides,
    \[
    \begin{split}
        -{\tilde\alpha_{k+1}^i}\nabla f_{j(i)}(y_k^i)^\top d_k^i &\leq \int_0^1 (\nabla f_{j(i)}(y_k^i + t{\tilde\alpha_{k+1}^i} d_k^i) - \nabla f_{j(i)}(y_k^i))^\top {\tilde\alpha_{k+1}^i} d_k^i dt + \gamma({\tilde\alpha_{k+1}^i})^2 \\
        & \leq ({\tilde\alpha_{k+1}^i})^2 \frac{L^{max}}{2} + \gamma ({\tilde\alpha_{k+1}^i})^2
    \end{split}
    \]
    So that 
    \[
    -\nabla f_{j(i)}(y_k^i)^\top d_k^i \leq  \left(\frac{L^{max}}{2} + \gamma \right) {\tilde\alpha_{k+1}^i}
    \]
    \[
    \begin{split}
    -\nabla f_{j(i)}(x_k)^\top d_k^i & = (-\nabla f_{j(i)}(x_k) + \nabla f_{j(i)}(y_k^i) - \nabla f_{j(i)}(y_k^i))^\top d_k^i \\
    & \leq  \left(\frac{L^{max}}{2} + \gamma \right) {\tilde\alpha_{k+1}^i} + L^{max}\|x_k-y_k^i\|\\
    & \leq \left(\frac{L^{max}}{2} + \gamma \right){\max_{i=1,\dots,r}\{\tilde\alpha_{k+1}^i\}} + L^{max}\sqrt{n}\max_{i=1,\dots,r}\{\tilde\alpha_{k+1}^i\}\\
    & \leq \left({L^{max} + \gamma}  + L^{max}\sqrt{n}\right)\max_{i=1,\dots,r}\{\tilde\alpha_{k+1}^i\}.
    \end{split}
    \]
If $Y^i(x_k) \not=Y^{i+1}(x_k)$ then $\tilde  x\in Y^{i+1}(x_k)$ exists such that
    \begin{equation}\label{app2}
    F\left(y_k^i+\ \frac{\tilde\alpha_{k+1}^i}{\delta }d_k^i\right) >  F(\tilde x) -\gamma \left(\frac{\tilde\alpha_{k+1}^i}{\delta }\right)^2\!\!\one.
    \end{equation}
    Since  $y_k^i+\tilde\alpha_{k+1}^id_k^i\in Y^{i+1}(x_k)$ and  $y_k^i+\tilde\alpha_{k+1}^id_k^i$ is not dominated by any point in $Y^{i+1}(x_k)$  then an index $j(i)$ exists such that
    $$
      f_{j(i)}(y_k^i+\tilde\alpha_{k+1}^id_k^i)<  f_{j(i)}(\tilde x).
    $$ Therefore, recalling (\ref{app2}),
     \[
      f_{j(i)}(y_k^i + \frac{\tilde\alpha_{k+1}^i}{\delta } d_k^i) > f_{j(i)}(y_k^i+\tilde\alpha_{k+1}^id_k^i) -\gamma(\frac{\tilde\alpha_{k+1}^i}{\delta })^2
    \] 
Then, we can write
    \[
    \begin{split}
        0 &\leq f_{j(i)}(y_k^i + \frac{\tilde\alpha_{k+1}^i}{\delta } d_k^i) -f_{j(i)}(y_k^i+\tilde\alpha_{k+1}^id_k^i) +\gamma(\frac{\tilde\alpha_{k+1}^i}{\delta})^2  \\
                =& \int_0^1 \nabla f_{j(i)}(y_k^i+(1 + t\frac{1-\delta}{\delta })\tilde\alpha_{k+1}^i d_k^i)^\top 
                  \frac{1-\delta}{\delta } \tilde\alpha_{k+1}^id_k^i dt
                +\gamma(\frac{\tilde\alpha_{k+1}^i}{\delta })^2
    \end{split}
    \]
\par
\par
    Adding $-\tilde\alpha_{k+1}^i\frac{1-\delta}{\delta}\nabla f_{j(i)}(y_k^i)^\top d_k^i$ to both sides,
    \[
    \begin{split}
& -\tilde\alpha_{k+1}^i\frac{1-\delta}{\delta}\nabla f_{j(i)}(y_k^i)^\top d_k^i \\
&\quad \leq \int_0^1 (\nabla f_{j(i)}(y_k^i+(1 + t\frac{1-\delta}{\delta })\tilde\alpha_{k+1}^i d_k^i)-\nabla f_{j(i)}(y_k^i))^\top 
                  \frac{1-\delta}{\delta } \tilde\alpha_{k+1}^id_k^i dt
                +\gamma(\frac{\tilde\alpha_{k+1}^i}{\delta })^2 \\
    & \quad \leq \frac{1-\delta}{\delta } \tilde\alpha_{k+1}^i\int_0^1 L^{max} (1+ t\frac{1-\delta}{\delta }) \tilde\alpha_{k+1}^idt +\gamma(\frac{\tilde\alpha_{k+1}^i}{\delta })^2\\
        & \quad = 
    \frac{1-\delta}{\delta } (\tilde\alpha_{k+1}^i)^2L^{max}(\frac{1+\delta}{2\delta}) +\gamma(\frac{\tilde\alpha_{k+1}^i}{\delta })^2
    \end{split}
    \]
    So that 
    \[
    -\nabla f_{j(i)}(y_k^i)^\top d_k^i \leq  \tilde\alpha_{k+1}^iL^{max}(\frac{1+\delta}{2\delta}) +\gamma\frac{\tilde\alpha_{k+1}^i}{\delta(1-\delta) } \leq \tilde\alpha_{k+1}^i\frac{1+\delta}{\delta(1-\delta)}\left(L^{max} +\gamma \right)
    \]
and we obtain
    \[
    \begin{split}
    -\nabla f_{j(i)}(x_k)^\top d_k^i & = (-\nabla f_{j(i)}(x_k) + \nabla f_{j(i)}(y_k^i) - \nabla f_{j(i)}(y_k^i))^\top d_k^i \\
    & \leq  \tilde\alpha_{k+1}^i\frac{1+\delta}{\delta(1-\delta)}\left(L^{max} +\gamma \right)+ L^{max}\|x_k-y_k^i\|\\
    & \leq \frac{1+\delta}{\delta(1-\delta)}\left(L^{max} +\gamma \right) \max_{i=1,\dots,r}\{\tilde\alpha_{k+1}^i\}+ L^{max}\sqrt{n}\max_{i=1,\dots,r}\{\tilde\alpha_{k+1}^i\}\\
    & \leq \left(\frac{2}{\delta(1-\delta)}\left(L^{max} +\gamma \right) + L^{max}\sqrt{n}\right)\max_{i=1,\dots,r}\{\tilde\alpha_{k+1}^i\}.
    \end{split}
    \]
     }

    Then, considering the definition of $\mu_{D_k}(x)$, we can write
    \begin{equation}\label{boundmu_succ}
    \mu_{D_k}(x_k) \leq \left(\frac{2(L^{max} + \gamma)}{\delta(1-\delta)}   + L^{max}\sqrt{n}\right)\max_{i=1,\dots,r}\{\tilde\alpha_{k+1}^i\}.
    \end{equation}

    \item[(b)] In this case we have $Y(x_k) = X(x_k)$ so that it results $Y^i(x_k) = X(x_k)$, for all $i=1,\dots,r+1$. Then, by reasoning as above we can prove
    \begin{equation}\label{boundmu_fall}
    \mu_{D_k}(x_k) \leq \left(\frac{L^{max} + 2\gamma}{2\theta} \right)\max_{i=1,\dots,r}\{\tilde\alpha_{k+1}^i\}.
    \end{equation}
\end{itemize}
Finally, \eqref{boundmu_succ} and \eqref{boundmu_fall} achieves the proof.
\end{proof}
\par\medskip

In the following proposition, we show that the sequence of hypervolume values $\{HI(F(X_k))\}$ is non-decreasing. In particular, we show that for every successfull iteration of the algorithm the hypervolume strictly increases of a quantity which is related to the maximum of the maximum stepsizes which is defined in the algorithm, i.e.
\[
\bar\Delta_{k+1} \stackrel{\triangle}{=} \max_{(x_j,\tilde\alpha_j)\in \tilde L_k} \max_{i=1,\dots,r}\tilde\alpha_j^i
\]

\begin{proposition}\label{HIincrease}
Let the reference point be $\rho = f_{\max}+s\mathbf{1}$ (with $s> 0$ sufficiently large scalar paraneter). Then,
    \DFMOnew\ is such that, for a successful iteration (namely when $X_k\neq X_{k+1}$), we have
    \[
    HI(F(X_{k+1})) - HI(F(X_k)) \geq \left\{\begin{array}{ll}
         \left(\gamma\ c^2\bar\Delta_{k+1}^2\right)^q & \ \text{if}\ \bar\Delta_k = \bar\Delta_{k+1} \\
         \left(\gamma\ \bar\Delta_{k+1}^2\right)^q&  \ \text{if}\ \bar\Delta_k < \bar\Delta_{k+1} 
    \end{array}\right.
    \] 
\end{proposition}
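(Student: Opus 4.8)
The plan is, for a successful iteration $k$ of \DFMOnew, to isolate a single point $w$ produced during iteration $k$ together with a scalar $\nu>0$ for which $F(w)\not> F(x)-\nu\one$ holds for every $x\in X_k$; point (ii) of Proposition~\ref{HIincrease_gen}, applied with $Y=X_k$, then gives $HI(F(X_k\cup\{w\}))-HI(F(X_k))\ge\nu^q$, and the announced bound follows by transferring this inequality to $X_{k+1}$ through monotonicity of the hypervolume (note that for $s$ large enough $\rho$ is dominated by all the points generated by the algorithm, so $HI$ and Proposition~\ref{HIincrease_gen} do apply). Before that I would record three facts. (a) $HI$ is monotone with respect to set inclusion, and $HI(F(A))=HI(F(\widehat{A}))$, where $\widehat{A}$ is the non-dominated subset of $A$, since the box $[F(a),\rho]$ of a dominated point is contained in the box of a point dominating it. (b) Throughout iteration $k$ the set of points stored in $\tilde L_k$ never shrinks and initially equals $X_k$; hence $X(x_k)\supseteq X_k$ for every explored $x_k$, so $Y^{i+1}(x_k)\supseteq X_k$ for every $i$; moreover every point produced by \texttt{Approximate Optimization} during iteration $k$ lies in the point set of $\tilde L_k$ at the end of the iteration, so, combining with (a), $HI(F(X_{k+1}))\ge HI(F(X_k\cup\{w\}))$ for any such $w$. (c) If every exploration at iteration $k$ failed, then every pair of $\tilde L_k$ would carry step sizes $\theta\max\{\tilde\alpha_j^i,c\bar\Delta_k\}\le\theta\bar\Delta_k<\bar\Delta_k$, so $\bar\Delta_{k+1}<\bar\Delta_k$; consequently, in either regime of the statement (both of which require $\bar\Delta_{k+1}\ge\bar\Delta_k$) at least one exploration is successful, hence at least one point $w=y^i+\alpha d^i$ is appended by \texttt{Approximate Optimization}.

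\emph{Regime $\bar\Delta_k=\bar\Delta_{k+1}$.} Here I would take $w=y^i+\alpha d^i$ to be \emph{any} appended point, which exists by (c). Its step size satisfies $\alpha\ge\nu^i=\max\{\tilde\alpha^i,c\bar\Delta_k\}\ge c\bar\Delta_k=c\bar\Delta_{k+1}$, and the test under which $w$ entered the working set reads $F(w)\not> F(x_j)-\gamma\alpha^2\one$ for all $x_j$ in the then-current $Y^{i+1}(x_k)$, which contains $X_k$ by (b). Since $\gamma\alpha^2\ge\gamma c^2\bar\Delta_{k+1}^2$, this forces $F(w)\not> F(x)-\gamma c^2\bar\Delta_{k+1}^2\one$ for every $x\in X_k$. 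Applying Proposition~\ref{HIincrease_gen}(ii) with $\nu=\gamma c^2\bar\Delta_{k+1}^2>0$ and then (b) yields $HI(F(X_{k+1}))-HI(F(X_k))\ge(\gamma c^2\bar\Delta_{k+1}^2)^q$.

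\emph{Regime $\bar\Delta_k<\bar\Delta_{k+1}$.} Now the choice of $w$ must be made carefully. The step sizes occurring in $\tilde L_k$ are of three kinds: (i) the original step sizes $\tilde\alpha^i_j\le\bar\Delta_k$ of the pairs of $L_k$ whose exploration succeeded and which are therefore retained; (ii) the step sizes $\theta\max\{\tilde\alpha^i_j,c\bar\Delta_k\}\le\theta\bar\Delta_k<\bar\Delta_k$ of the pairs whose exploration failed; (iii) the step sizes $\alpha^i_k$ produced by successful explorations, equal to $\max\{\tilde\alpha^i_k,c\bar\Delta_k\}\le\bar\Delta_k$ along directions that were not expanded and to $\max\{\tilde\alpha^i_k,c\bar\Delta_k\}/\delta^{m}$ with $m\ge1$ along directions that were expanded. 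Since $c,\theta\in(0,1)$, only step sizes of kind (iii) along an expanded direction can exceed $\bar\Delta_k$; hence $\bar\Delta_{k+1}=\alpha^i_k$ for such a direction $d^i$ of a successful exploration of some $x_k$. Taking $w=y^i+\alpha^i_k d^i=y^i+\bar\Delta_{k+1}d^i$, the last point appended along $d^i$, the acceptance test reads $F(w)\not> F(x_j)-\gamma\bar\Delta_{k+1}^2\one$ for all $x_j\in Y^{i+1}(x_k)\supseteq X_k$; Proposition~\ref{HIincrease_gen}(ii) with $\nu=\gamma\bar\Delta_{k+1}^2$ together with (b) then give $HI(F(X_{k+1}))-HI(F(X_k))\ge(\gamma\bar\Delta_{k+1}^2)^q$.

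I expect the only delicate point to be in the second regime: one must be sure that the acceptance inequality \texttt{Approximate Optimization} records for an expansion point is tested against a set containing all of $X_k$ (not merely the local working list), and that no step size in $\tilde L_k$ arising from a retained original pair, a failed exploration, or a non-expanded direction can equal the enlarged maximum $\bar\Delta_{k+1}$ — which is exactly where the assumptions $c,\theta\in(0,1)$ are used. The first regime and facts (a)–(c) are routine.
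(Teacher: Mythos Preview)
Your proof is correct and follows essentially the same route as the paper: isolate an accepted point produced by \texttt{Approximate Optimization}, invoke Proposition~\ref{HIincrease_gen}(ii), and sandwich via monotonicity of the hypervolume. The only cosmetic difference is that the paper applies Proposition~\ref{HIincrease_gen} with $Y=Y^j(x_k)$ and then uses $HI(F(X_k))\le HI(F(Y^j(x_k)))$ and $HI(F(X_{k+1}))\ge HI(F(Y^j(x_k)\cup\{y_k^j\}))$, whereas you first restrict the acceptance test from $Y^{i+1}(x_k)$ down to $X_k$ and apply the proposition with $Y=X_k$; your version also spells out more carefully the case analysis on step-size types needed to locate $\bar\Delta_{k+1}$ in the second regime.
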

\begin{proof}
    For a successful iteration, only the following two cases can occur. 
    \begin{itemize}
    \item $\bar\Delta_k = \bar\Delta_{k+1}$. In this case, an $x_k\in X_k$ exists such that the {\tt Approximate optimization} procedure starting from $x_k$ produced at least a point $y_k^j$, with $j\in\{2,\dots,r+1\}$. This, in turn implies that $y_k^j$ is such that
    \[
     F(y_k^j) \not > F(x_t) - \gamma(\nu_k^{j-1})^2\mathbf{1},\quad\forall\ x_t\in Y^j(x_k).
    \]
    Then, recalling Proposition \ref{HIincrease_gen} and that $HI(F(X_k))\leq HI(F(Y^j(x_k)))$ and $HI(F(X_{k+1}))\geq HI(F(Y^j(x_k)\cup\{y_k^j\}))$, we can write
    \[
    HI(F(X_{k+1})) - HI(F(X_k)) \geq HI(F(Y^j(x_k)\cup\{y_k^j\})) - HI(F(Y^j(x_k))) \geq (\gamma(\nu_k^{j-1})^2)^q
    \]
    which, recalling that $\nu_k^{j-1}\geq c\bar\Delta_k = c\bar\Delta_{k+1}$ yields
    \[
    HI(F(X_{k+1})) - HI(F(X_k)) \geq(\gamma c^2\bar\Delta_{k+1}^2)^q.
    \]

    \item $\bar\Delta_{k+1} > \bar\Delta_k$. In this case, an $x_k\in X_k$ exists such that the {\tt Approximate optimization} procedure starting from $x_k$ produced at least a point $y_k^j$ with $j\in\{2,\dots,r+1\}$ using a step size $\alpha_k^{j-1}$ such that 
    \[
    \alpha_k^{j-1} = \bar\Delta_{k+1}
    \]
    and we can write
    \[
     F(y_k^j) \not > F(x_t) - \gamma(\bar\Delta_{k+1})^2\mathbf{1},\quad\forall\ x_t\in Y^j(x_k).
    \]
    Then, recalling again Proposition \ref{HIincrease_gen} and that $HI(F(X_k))\leq HI(F(Y^j(x_k)))$ and $HI(F(X_{k+1}))\geq HI(F(Y^j(x_k)\cup\{y_k^j\}))$, we can write
    \[
    HI(F(X_{k+1})) - HI(F(X_k)) \geq HI(F(Y^j(x_k)\cup\{y_k^j\})) - HI(F(Y^j(x_k))) \geq (\gamma\bar\Delta_{k+1}^2)^q.
    \]
    \end{itemize}
    Then, the proof is concluded.
\end{proof}

\par\medskip

To better analyze the behavior of Algorithm \DFMOnew, let us define the {\em cost function}
\[
\Phi_k = -HI(F(X_k)) + \eta\bar\Delta_k^{2q}
\]
for a given parameter $\eta$ with $0 < \eta < \gamma^q$. Note that, under the stated assumptions, the hypervolume is bounded from above, i.e.
\[
HI(F(X_k)) \leq \vol\left([f_{\min},r]\right) = \overline{HI},
\]
hence, for all $k$, the cost function is bounded from below, that is
\[
-\overline{HI} \leq \Phi_k.
\]
In the next proposition, we show that the cost function always sufficiently decreases of a quantity which is related to $\bar\Delta_{k+1}$.

\begin{proposition}\label{phidecrease}\label{Jeps}
For every iteration of \DFMOnew, it holds that,
\[
\Phi_{k+1} - \Phi_k \leq -\tilde c (\bar\Delta_{k+1}^{2})^q
\]
with 
\begin{equation}\label{ctilde}
\tilde c = \min\left\{\eta\frac{1-\theta^{2q}}{\theta^{2q}}, \gamma^q c^{2q}, \gamma^q-\eta\right\} > 0
\end{equation}
\end{proposition}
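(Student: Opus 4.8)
The plan is to write
\[
\Phi_{k+1}-\Phi_k = -\bigl(HI(F(X_{k+1}))-HI(F(X_k))\bigr) + \eta\bigl(\bar\Delta_{k+1}^{2q}-\bar\Delta_k^{2q}\bigr),
\]
and then to bound the right-hand side separately in the three mutually exclusive situations $\bar\Delta_{k+1}=\bar\Delta_k$, $\bar\Delta_{k+1}>\bar\Delta_k$ and $\bar\Delta_{k+1}<\bar\Delta_k$, matching each to one of the three arguments of the minimum defining $\tilde c$. The only structural fact I need beforehand is that a \emph{strict decrease} $\bar\Delta_{k+1}<\bar\Delta_k$ can occur only at an unsuccessful iteration: indeed, at an unsuccessful iteration $X_{k+1}=X_k$ by definition and every stepsize is replaced by $\theta\max\{\tilde\alpha^i_j,c\bar\Delta_k\}\le\theta\bar\Delta_k$, whence $\bar\Delta_{k+1}\le\theta\bar\Delta_k<\bar\Delta_k$; conversely, Proposition~\ref{HIincrease} already tells us that at a successful iteration only $\bar\Delta_{k+1}=\bar\Delta_k$ or $\bar\Delta_{k+1}>\bar\Delta_k$ are possible. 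Consequently each of the two cases of Proposition~\ref{HIincrease} is available exactly when needed.

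In the first case $\bar\Delta_{k+1}=\bar\Delta_k$ the stepsize term vanishes, the iteration is successful, and Proposition~\ref{HIincrease} gives $HI(F(X_{k+1}))-HI(F(X_k))\ge(\gamma c^2\bar\Delta_{k+1}^2)^q=\gamma^q c^{2q}\bar\Delta_{k+1}^{2q}$, so $\Phi_{k+1}-\Phi_k\le-\gamma^q c^{2q}\bar\Delta_{k+1}^{2q}$. In the second case $\bar\Delta_{k+1}>\bar\Delta_k$ the iteration is still successful; dropping the nonpositive term $-\eta\bar\Delta_k^{2q}$ yields $\eta(\bar\Delta_{k+1}^{2q}-\bar\Delta_k^{2q})\le\eta\bar\Delta_{k+1}^{2q}$, and combining it with $HI(F(X_{k+1}))-HI(F(X_k))\ge\gamma^q\bar\Delta_{k+1}^{2q}$ (the second branch of Proposition~\ref{HIincrease}) gives $\Phi_{k+1}-\Phi_k\le-(\gamma^q-\eta)\bar\Delta_{k+1}^{2q}$, a genuine decrease since $\eta<\gamma^q$. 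In the third case $\bar\Delta_{k+1}<\bar\Delta_k$ the iteration is unsuccessful, so the hypervolume term is zero and $\bar\Delta_{k+1}\le\theta\bar\Delta_k$; hence $\bar\Delta_k^{2q}\ge\theta^{-2q}\bar\Delta_{k+1}^{2q}$ and $\eta(\bar\Delta_{k+1}^{2q}-\bar\Delta_k^{2q})\le-\eta\frac{1-\theta^{2q}}{\theta^{2q}}\bar\Delta_{k+1}^{2q}$, so $\Phi_{k+1}-\Phi_k\le-\eta\frac{1-\theta^{2q}}{\theta^{2q}}\bar\Delta_{k+1}^{2q}$. Since $\tilde c$ is, by definition, no larger than the coefficient appearing in any of the three cases, in all of them $\Phi_{k+1}-\Phi_k\le-\tilde c\,\bar\Delta_{k+1}^{2q}$; and $\tilde c>0$ because $\theta\in(0,1)$ makes $(1-\theta^{2q})/\theta^{2q}>0$, $\gamma,c>0$, and $\eta<\gamma^q$.

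The three per-case estimates are elementary; the part that I expect to require the most care is the case analysis itself — in particular arguing that $\bar\Delta_{k+1}<\bar\Delta_k$ forces the iteration to be unsuccessful (so that Proposition~\ref{HIincrease} covers precisely the two remaining cases) and that at an unsuccessful iteration the update rule contracts the largest stepsize by at least the factor $\theta$. Once the partition is in place, the rest is bookkeeping against the three terms of $\min\{\eta(1-\theta^{2q})/\theta^{2q},\ \gamma^q c^{2q},\ \gamma^q-\eta\}$.
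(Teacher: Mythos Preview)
Your argument follows the same three-term bookkeeping as the paper, but your case partition differs slightly from the paper's and leaves a small gap. The paper splits first by unsuccessful versus successful, and \emph{within the successful branch} by $\bar\Delta_{k+1}\le\bar\Delta_k$ versus $\bar\Delta_{k+1}>\bar\Delta_k$. You instead split by the trichotomy on $\bar\Delta_{k+1}$ versus $\bar\Delta_k$ and claim that $\bar\Delta_{k+1}<\bar\Delta_k$ forces the iteration to be unsuccessful. That implication is not supplied by the \emph{statement} of Proposition~\ref{HIincrease} (which only records the two hypervolume bounds), and it can actually fail: if the pair in $L_k$ attaining $\bar\Delta_k$ has a failing exploration while some other pair's exploration succeeds, the iteration is successful ($X_{k+1}\neq X_k$) yet every stepsize in $\tilde L_k$ can lie strictly below $\bar\Delta_k$. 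Your case~3 as written (hypervolume term equal to zero) does not cover this scenario.

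The repair is immediate and is exactly what the paper does: when the iteration is successful and $\bar\Delta_{k+1}\le\bar\Delta_k$, the argument behind the first branch of Proposition~\ref{HIincrease} still yields $HI(F(X_{k+1}))-HI(F(X_k))\ge(\gamma c^2\bar\Delta_{k+1}^2)^q$ (every initial trial step satisfies $\nu^i\ge c\bar\Delta_k\ge c\bar\Delta_{k+1}$), and since $\eta(\bar\Delta_{k+1}^{2q}-\bar\Delta_k^{2q})\le 0$ one gets $\Phi_{k+1}-\Phi_k\le-\gamma^q c^{2q}\bar\Delta_{k+1}^{2q}$ as before. With this adjustment your proof coincides with the paper's.
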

\begin{proof}
For every iteration $k$, only following cases can occur.
\begin{itemize}
    \item $X_{k+1} = X_k$. In this case, we have $\bar\Delta_{k+1} \leq \theta\bar\Delta_k$. Then, we can write
    \[
    \Phi_{k+1} - \Phi_k = \eta(\bar\Delta_{k+1})^{2q} - \eta(\bar\Delta_k)^{2q} \leq \eta\left(1 - \frac{1}{\theta^{2q}}\right)(\bar\Delta_{k+1})^{2q} = -\eta \frac{1 - \theta^{2q}}{\theta^{2q}}(\bar\Delta_{k+1})^{2q}.
    \]
    \item $X_{k+1}\neq X_k$ and $\bar\Delta_{k+1} \leq \bar\Delta_k$. By proposition \ref{HIincrease}, in this case we can write,
    \[
    \Phi_{k+1} - \Phi_k \leq -(\gamma c^2\bar\Delta_{k+1}^2)^q + \eta(\bar\Delta_{k+1}^{2q} - \bar\Delta_k^{2q})\leq -(\gamma c^2\bar\Delta_{k+1}^2)^q.
    \]
    \item $X_{k+1}\neq X_k$ and $\bar\Delta_{k+1} > \bar\Delta_k$. In this case, again by proposition \ref{HIincrease}, we get
    \[
    \Phi_{k+1} - \Phi_k \leq -(\gamma \bar\Delta_{k+1}^2)^q + \eta\bar\Delta_{k+1}^{2q} - \eta\bar\Delta_k^{2q}\leq -(\gamma^q-\eta)\bar\Delta_{k+1}^{2q}.
    \]
\end{itemize}
Then, the proof is concluded.
\end{proof}\par\noindent\smallskip

\subsection{Worst-case complexity of \DFMOnew}\ \par
In this section, we give complexity bounds for the algorithm \DFMOnew. To this aim, let us introduce the following criticality function. In particular, given a set of points $X\subseteq\Re^n$, let
\begin{equation}\label{Gamma}
\Gamma(X) = \max_{x\in X}\mu(x),
\end{equation}
i.e. $\Gamma(X)$ associates to every set of points $X$ a measure of stationarity as the worst stationarity measure of the points belonging to the set $X$ itself.

Then, we provide worst-case bounds for the number of iterations, number of function evaluations required by the algorithm to achieve a set $X_k$ such $\Gamma(X_k)\leq\eps$, i.e. such that all the points $x_i\in X_k$ have a criticality measure $\mu(x_i)$ at most equal to the prefixed threshold $\eps$. Furthermore, we are also able to bound the total number of iterations such that $\Gamma(X_k)>\eps$, i.e. such that (at least) a point $x_i\in X_k$ exists with a criticality measure $\mu(x_i)>\eps$. 

\begin{proposition}\label{prop:Kstrong}
For any $\epsilon > 0$, consider the following subset of indices:
	\begin{equation}\label{subset}
K_{\eps}=\left\{\ k\in\{0,1,\ldots\}:\ \Gamma(X_k) > \epsilon  \ \right\}.
\end{equation}	
 Then,
\[
|K_{\eps}| \leq \left\lfloor
\frac{(\Phi_0+\overline{HI})(\hat c({\cal C}+1))^{2q}}{\tilde c}\epsilon^{-2q}
\right\rfloor\leq {\cal O}(n^q\,\epsilon^{-2q}).
\]
\end{proposition}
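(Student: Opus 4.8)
The plan is to chain together the stepsize--criticality estimate of Proposition~\ref{prop:boundmu}, Assumption~\ref{ass:assumption_mu}, and the sufficient-decrease property of the cost function $\Phi_k$ from Proposition~\ref{phidecrease}, and then telescope. First I would fix an index $k\in K_\eps$ and pick a point $\hat x_k\in X_k$ attaining the worst stationarity, i.e. $\mu(\hat x_k)=\Gamma(X_k)>\eps$. By Assumption~\ref{ass:assumption_mu}, $\mu(\hat x_k)\le(\mathcal C+1)\mu_{D_k}(\hat x_k)$, so $\mu_{D_k}(\hat x_k)>\eps/(\mathcal C+1)$. Applying Proposition~\ref{prop:boundmu} to $\hat x_k$ (in either the successful or the unsuccessful case) and writing $\hat c=\max\{c_1+L^{max}\sqrt n,\ c_2\}$ gives $\mu_{D_k}(\hat x_k)\le\hat c\,\Delta_{k+1}$, where $\Delta_{k+1}=\max_{i=1,\dots,r}\tilde\alpha^i_{k+1}$ is the largest stepsize produced by {\tt Approximate optimization} starting from $\hat x_k$. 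Since those stepsizes are among the pairs stored in $\tilde L_k$, we have $\Delta_{k+1}\le\bar\Delta_{k+1}$, hence
\[
\bar\Delta_{k+1}\;>\;\frac{\eps}{\hat c(\mathcal C+1)},\qquad\text{i.e.}\qquad \bar\Delta_{k+1}^{2q}\;>\;\Big(\frac{\eps}{\hat c(\mathcal C+1)}\Big)^{2q}\qquad\text{for every }k\in K_\eps.
\]

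Next I would invoke Proposition~\ref{phidecrease}: for \emph{every} iteration $\Phi_{k+1}-\Phi_k\le-\tilde c\,\bar\Delta_{k+1}^{2q}\le 0$, so $\{\Phi_k\}$ is non-increasing, and for $k\in K_\eps$ the decrease is strictly bounded below by $\tilde c\,(\eps/(\hat c(\mathcal C+1)))^{2q}$. Telescoping over $k=0,\dots,N-1$ for arbitrary $N$, using that every term $\Phi_k-\Phi_{k+1}$ is nonnegative (so restricting the sum to $k\in K_\eps$ only decreases it) and that $\Phi_N\ge-\overline{HI}$ by Assumption~\ref{ass:boundedness}, yields
\[
\Phi_0+\overline{HI}\;\ge\;\Phi_0-\Phi_N\;=\;\sum_{k=0}^{N-1}\big(\Phi_k-\Phi_{k+1}\big)\;\ge\;\big|K_\eps\cap\{0,\dots,N-1\}\big|\,\tilde c\,\Big(\frac{\eps}{\hat c(\mathcal C+1)}\Big)^{2q}.
\]
Letting $N\to\infty$ and solving for the cardinality gives $|K_\eps|\le(\Phi_0+\overline{HI})(\hat c(\mathcal C+1))^{2q}/(\tilde c\,\eps^{2q})$; since $|K_\eps|$ is a nonnegative integer the floor may be taken, which is exactly the claimed bound. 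The final $\mathcal O(n^q\eps^{-2q})$ estimate then follows because $\hat c=\mathcal O(\sqrt n)$ (the $L^{max}\sqrt n$ term carries all the $n$-dependence), so $(\hat c(\mathcal C+1))^{2q}=\mathcal O(n^q)$ while $\Phi_0$, $\overline{HI}$, $\tilde c$ and $\mathcal C$ are $n$-independent.

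The proof is essentially assembly of the stated results, so there is no deep obstacle; the points requiring care are bookkeeping. One must make sure the point $\hat x_k$ realizing $\Gamma(X_k)$ is covered by Proposition~\ref{prop:boundmu} — it is, since that proposition is stated for \emph{every} $x_k\in X_k$ — and that the point-specific maximal stepsize $\Delta_{k+1}$ is dominated by the global $\bar\Delta_{k+1}$, which holds directly by the definition of $\bar\Delta_{k+1}$ as the maximum over $\tilde L_k$. The other subtlety is that the indices in $K_\eps$ need not be consecutive, so the telescoping argument crucially uses the monotonicity $\Phi_{k+1}\le\Phi_k$ on all iterations (not only those in $K_\eps$); this monotonicity is furnished by Proposition~\ref{phidecrease} since $\tilde c>0$. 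No step needs anything beyond the cited results together with the lower bound $\Phi_k\ge-\overline{HI}$.
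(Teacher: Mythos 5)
Your proposal is correct and follows essentially the same route as the paper's proof: bound $\Gamma(X_k)$ via Assumption \ref{ass:assumption_mu} and Proposition \ref{prop:boundmu} to get $\bar\Delta_{k+1} > \eps/(\hat c({\cal C}+1))$ on $K_\eps$, then telescope the sufficient decrease of $\Phi_k$ from Proposition \ref{phidecrease} against the lower bound $-\overline{HI}$. The only difference is cosmetic (you work with the single worst point $\hat x_k$ and note $\Delta_{k+1}\le\bar\Delta_{k+1}$, while the paper takes the maximum over all $x_i\in X_k$ directly), and your bookkeeping of these points is accurate.
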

\begin{proof}
By the stated hypothesis and Assumption \ref{ass:assumption_mu}, 
for all iterations $k = 0,1,\dots$, and for all  points $x_i\in X_k$, we have:
\[
\epsilon < \mu(x_i)  \leq ({\cal C}+1)\mu_{D_k}(x_i),
\]
from which:
\[
\epsilon < \max_{ x_i\in X_k} \mu(x_i)  \leq ({\cal C}+1)\max_{ x_i\in X_k} \mu_{D_k}(x_i),
\] Then, from proposition \ref{prop:boundmu}, we have
\begin{equation}\label{boundbardelta1}
\frac{\epsilon}{{\cal C}+1} < \max_{ x_i\in X_k} \mu_{D_k}(x_i) \leq \hat c\bar\Delta_{k+1}.
\end{equation}
where $\hat c = \max\{c_1+L^{max}\sqrt{n},c_2\}\leq {\cal O}(n^{1/2})$ and $c_1,c_2$ are defined in proposition \ref{prop:boundmu}.
Then, from proposition \ref{phidecrease}, we have
\begin{equation}\label{complKeps}
\Phi_0- \Phi_{k+1} \geq \sum_{\tilde k=0}^k\tilde c(\bar\Delta_{{\tilde k}+1}^2)^q,
\end{equation}
The sequence $\{\Phi_k\}$ is bounded from below hence  $\Phi^*$ exists such that:
$$\lim_{k\to\infty}\Phi_k=\Phi^*\ge-\overline{HI}.$$
Taking the limit for $k\to\infty$ in (\ref{complKeps}) we obtain:

\[
\Phi_0+\overline{HI} \geq \sum_{ k=1}^\infty\tilde c(\bar\Delta_{{ k}+1}^2)^q \geq \sum_{ k\in K_{\eps}} \tilde c(\bar\Delta_{{ k}+1}^2)^q\geq |K_{\epsilon}|\tilde c \frac{\epsilon^{2q}}{\hat c^{2q}({\cal C}+1)^{2q}}
\]
Then, we obtain
\[
|K_{\epsilon}| \leq \left\lfloor
\frac{(\Phi_0+\overline{HI})(\hat c({\cal C}+1))^{2q}}{\tilde c}\epsilon^{-2q}
\right\rfloor\leq {\cal O}(n^q\eps^{-2q}),
\]
thus concluding the proof.
\end{proof}
\par\smallskip

\begin{remark}\label{bound_iter_strong}
 For any $\eps >0$, let $j_{\epsilon}$ be the first iteration of algorithm \DFMOnew\ such that $\Gamma(X_{j_{\eps}}) \leq\eps$, i.e.  $\mu(x_i) \leq\epsilon$ for all $x_i\in X_{j_{\epsilon}}$. Noting that 
$j_{\epsilon} \leq |K_\epsilon|$,
 we also have that
 \[
 j_\epsilon \leq {\cal O}(n^q\epsilon^{-2q}).
 \]
\end{remark}
\par\smallskip

Exploiting Proposition \ref{prop:Kstrong} we can immediately give the following corollary which states asymptotical convergence properties of the sequence of sets $\{X_k\}$ produced by the algorithm \DFMOnew.
\par\medskip

\begin{corollary}
Let $\{X_k\}$ be the sequence of sets of points produced by algorithm \DFMOnew. Then:
\begin{enumerate}
    \item[(i)] $\lim_{k\to\infty}\Gamma(X_k) = 0;$
    \item[(ii)] if $\{x_k\}$ is a sequence such that $x_k\in X_k$ for all $k$, $\lim_{k\to\infty}\mu(x_k) = 0.$ 
\end{enumerate}
\end{corollary}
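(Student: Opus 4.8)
The plan is to read off both statements as asymptotic consequences of Proposition \ref{prop:Kstrong}, which has already established that the index set $K_\eps = \{k : \Gamma(X_k) > \eps\}$ is finite for every fixed $\eps > 0$.

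For part (i), I would first recall from the Lemma of \cite{fliege2000steepest} (item 1) that $\mu(x) \geq 0$ for every $x \in \Re^n$, so that $\Gamma(X_k) = \max_{x \in X_k} \mu(x) \geq 0$ for all $k$. Then, fixing an arbitrary $\eps > 0$, finiteness of $K_\eps$ provides an index $\bar k_\eps$ with $k \notin K_\eps$, i.e.\ $\Gamma(X_k) \leq \eps$, for all $k \geq \bar k_\eps$. Hence $0 \leq \limsup_{k \to \infty} \Gamma(X_k) \leq \eps$, and letting $\eps \downarrow 0$ gives $\lim_{k \to \infty} \Gamma(X_k) = 0$.

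For part (ii), given any sequence $\{x_k\}$ with $x_k \in X_k$ for all $k$, the definition \eqref{Gamma} of $\Gamma$ immediately yields $0 \leq \mu(x_k) \leq \Gamma(X_k)$ for every $k$; a squeeze argument together with part (i) then gives $\lim_{k \to \infty} \mu(x_k) = 0$.

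I do not expect any genuine obstacle in this argument: all the substance is already contained in Proposition \ref{prop:Kstrong} (and, upstream of it, in the hypervolume-increase estimate of Proposition \ref{HIincrease}, the cost-function decrease of Proposition \ref{phidecrease}, and the stepsize/criticality bound of Proposition \ref{prop:boundmu}). The corollary merely translates the finite worst-case complexity count into a limit statement, so the proof is a few lines long.
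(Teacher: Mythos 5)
Your proposal is correct and follows essentially the same route as the paper: part (i) is exactly the "definition of limit plus Proposition \ref{prop:Kstrong}" argument (finiteness of $K_\eps$ for every $\eps>0$ forces $\Gamma(X_k)\leq\eps$ for all sufficiently large $k$), and part (ii) is the same squeeze via $0\leq\mu(x_k)\leq\Gamma(X_k)$ used in the paper. No gaps; your version merely spells out the details the paper leaves implicit.
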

\begin{proof}
    The proof of point (i) immediately follows from the definition of limit and Proposition \ref{prop:Kstrong}.

    Concerning point (ii), for every iteration $k$, by definition of $\Gamma$, we can write
    \[
    0 \leq \mu(x_k) \leq \Gamma(X_k).
    \]
    Hence, the proof is concluded by recalling point (i) above.
\end{proof}
\par\medskip

\begin{proposition}\label{prop:NF_DFMOnew}
Given any $\epsilon > 0$, let $j_{\epsilon} \ge 1$ be the first iteration such that $\Gamma(X_{j_{\eps}})\leq\eps$, i.e. $\mu(x_{i})\leq\epsilon$ for all $x_i\in X_{j_{\epsilon}}$.
Then, denoting by $Nf_{j_{\epsilon}}$ the number of function evaluations required by Algorithm \DFMOnew\ up to iteration $j_{\epsilon}$, we have that
$Nf_{j_{\epsilon}} \leq {\cal O}(n\,L(\epsilon)\,\epsilon^{-2q})$, where $L(\eps) = \max_{k=0,\dots,j_\eps}|L_k|$. In particular,
\[
Nf_{j_{\epsilon}} \leq {{\cal O}(n)} |L(\epsilon)| \left\lfloor\frac{(\overline{HI} + \Phi_0)(\hat c({\cal C}+1))^{2q}}{\tilde c}\epsilon^{-2q}\right\rfloor + \left\lfloor\frac{(\overline{HI} - HI_0)(\hat c({\cal C}+1))^{2q}}{\gamma^qc^{2q}}\epsilon^{-2q}\right\rfloor,
\]
where $\tilde c$  and $c_2$ are given by~\eqref{ctilde} and in proposition \eqref{prop:boundmu}, respectively.
\end{proposition}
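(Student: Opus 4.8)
The plan is to split $Nf_{j_\epsilon}$ into the function evaluations spent in \emph{failing} tests of the inner \texttt{while}-loop condition of \texttt{Approximate Optimization} and those spent in its \emph{successful} (point-producing) passes, and to bound the two contributions separately, the first against the iteration bound of Proposition~\ref{prop:Kstrong} and the second against a hypervolume budget. The structural fact to record first is that one call to \texttt{Approximate Optimization} scans exactly $r$ directions and, along direction $i$, evaluates $F$ once per pass of the \texttt{while} loop plus once more for the test that makes the loop exit; hence a single call costs exactly $r$ failing evaluations plus a number of evaluations equal to the number of points it appends to the list. Since at iteration $k$ the procedure is called once for each $(x_k,\tilde\alpha_k)\in L_k$, iteration $k$ costs $r|L_k|$ failing evaluations plus (number of points produced at iteration $k$) further evaluations.

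For the failing part I would simply sum over the iterations $0,\dots,j_\epsilon-1$ (which suffice to produce $X_{j_\epsilon}$): using $r={\cal O}(n)$, $|L_k|\le|L(\epsilon)|$, the bound $j_\epsilon\le|K_\epsilon|$ recorded in the Remark after Proposition~\ref{prop:Kstrong}, and the estimate on $|K_\epsilon|$ in Proposition~\ref{prop:Kstrong}, this gives
\[
\sum_{k=0}^{j_\epsilon-1} r|L_k|\ \le\ {\cal O}(n)\,|L(\epsilon)|\,|K_\epsilon|\ \le\ {\cal O}(n)\,|L(\epsilon)|\left\lfloor\frac{(\overline{HI}+\Phi_0)(\hat c({\cal C}+1))^{2q}}{\tilde c}\,\epsilon^{-2q}\right\rfloor,
\]
which is exactly the first term of the statement.

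For the point-producing part I would run a hypervolume-budget argument. Fix an iteration $k$ and a point $x$ explored at iteration $k$. Along direction $i$, the $m$-th point appended by \texttt{Approximate Optimization} satisfies $F(\cdot)\not> F(x_j)-\gamma(\nu^i/\delta^{m-1})^2\mathbf{1}$ for every $x_j$ in the current set $Y^{i+1}(x)$, which already contains $Y^i(x)$ and all points appended earlier along direction $i$; so Proposition~\ref{HIincrease_gen}(ii), applied with $\nu=\gamma(\nu^i/\delta^{m-1})^2\ge\gamma(\nu^i)^2\ge\gamma c^2\bar\Delta_k^2$ (recall $\nu^i\ge\bar\nu=c\bar\Delta_k$), shows that each such pass raises the hypervolume of the running set by at least $(\gamma c^2\bar\Delta_k^2)^q$. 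Summing over $i=1,\dots,r$ yields
\[
HI(F(Y(x)))-HI(F(X(x)))\ \ge\ \big(\text{points produced while exploring }x\big)\,(\gamma c^2\bar\Delta_k^2)^q.
\]
Since $X(x)=Y^1(x)\subseteq Y(x)$ for every explored $x$, appending $Y(x)$ to the running list $\tilde L_k$ leaves its point set equal to $Y(x)$, so these per-point inequalities telescope over the points of $L_k$ and then, using that the hypervolume is unchanged by pruning to non-dominated points (hence $HI(F(X_{k+1}))=HI(F(\tilde L_k))$ and $HI(F(X_k))=HI(F(L_k))$), over $k=0,\dots,j_\epsilon-1$, to give
\[
\overline{HI}-HI_0\ \ge\ HI(F(X_{j_\epsilon}))-HI(F(X_0))\ \ge\ \sum_{k=0}^{j_\epsilon-1}\big(\text{points produced at iteration }k\big)\,(\gamma c^2\bar\Delta_k^2)^q.
\]
Finally, for $k<j_\epsilon$ we have $\Gamma(X_k)>\epsilon$, so \eqref{boundbardelta1} (applied at iteration $k-1$, and assuming as usual $\bar\Delta_0\ge\epsilon$ for the case $k=0$) gives $\bar\Delta_k>\epsilon/(\hat c({\cal C}+1))$; substituting the resulting bound $(\gamma c^2\bar\Delta_k^2)^q>\gamma^qc^{2q}(\hat c({\cal C}+1))^{-2q}\epsilon^{2q}$ and solving bounds the total number of produced points by $\big\lfloor(\overline{HI}-HI_0)(\hat c({\cal C}+1))^{2q}(\gamma^qc^{2q})^{-1}\epsilon^{-2q}\big\rfloor$, which is the second term. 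Adding the two contributions yields the displayed inequality, and since $\hat c={\cal O}(n^{1/2})$ and $|K_\epsilon|={\cal O}(n^q\epsilon^{-2q})$ it collapses to $Nf_{j_\epsilon}\le{\cal O}(n\,|L(\epsilon)|\,\epsilon^{-2q})$.

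The step I expect to be the main obstacle is this telescoping for the point-producing count. One has to check that within a single call the expansion points contribute \emph{additively} to the hypervolume growth — each new point being $\gamma(\cdot)^2$-nondominated not only by the incoming set $X(x)$ but also by every point generated before it in that call, across all directions — and that when several points of $L_k$ are explored in sequence the individual hypervolume increases do not overlap; this is exactly where the inclusion $X(x)\subseteq Y(x)$ and the non-dominance invariance of the hypervolume are essential. Once that is in place, the rest is a routine substitution of the constants from Propositions~\ref{prop:boundmu} and~\ref{prop:Kstrong}.
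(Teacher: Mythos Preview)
Your proposal is correct and follows essentially the same approach as the paper's proof: both split $Nf_{j_\epsilon}$ into the $r|L_k|$ ``failing'' evaluations per iteration (bounded via $j_\epsilon\le|K_\epsilon|$ and Proposition~\ref{prop:Kstrong}) and the ``point-producing'' evaluations (bounded by a hypervolume-budget argument using Proposition~\ref{HIincrease_gen}(ii) with $\nu=\gamma\alpha^2\ge\gamma c^2\bar\Delta_k^2$ and the lower bound $\bar\Delta_k\ge\epsilon/(\hat c({\cal C}+1))$ from Proposition~\ref{prop:boundmu}). You are in fact more explicit than the paper about why the per-insertion hypervolume increments telescope both within a single call to \texttt{Approximate Optimization} and across the points of $L_k$; the paper condenses this into a single sentence, so the step you flag as the ``main obstacle'' is exactly the one the paper treats most tersely.
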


\begin{proof}
First, let us partition the set of iteration indices $\{0,\dots,j_{\epsilon}-1\}$ into $\mathcal S_{j_{\epsilon}}$ and $\mathcal U_{j_{\epsilon}}$ such that
\[
k \in \mathcal S_{j_{\epsilon}} \, \Leftrightarrow \,  L_k \not= L_{k-1}, \quad k\in \mathcal U_{j_{\epsilon}} \, \Leftrightarrow \,  L_k = L_{k-1}, \quad \mathcal S_{j_{\epsilon}} \cup \mathcal U_{j_{\epsilon}} = \{0,\dots,j_{\epsilon}-1\},
\]
that is, $\mathcal S_{j_{\epsilon}}$ and $\mathcal U_{j_{\epsilon}}$ contain the successful and unsuccessful iterations up to $j_{\epsilon}-1$, respectively.



When the algorithm explores a new point, the latter can either succeed to increase the hypervolume or fail to do so. Let us then define $Nf_{j_{\epsilon}}^{\mathcal S}$ as the total number of function evaluations related to points which succeed to increase the hypervolume up to iteration $j_{\epsilon}$. Note that, at each iteration, the maximum number of function evaluations related to points which fail to increase the hypervolume is $r|L_k|$.
From the instructions of the algorithm, we can write
\begin{equation}\label{NF}
Nf_{j_{\epsilon}} \leq r\ j_{\epsilon}\ \max_{i=0,\dots,j_{\epsilon}}|L_k| + Nf_{j_{\epsilon}}^{\mathcal S} \le {\cal O}(n) |L(\epsilon)| \left\lfloor\frac{(\overline{HI} + \Phi_0)(\hat c({\cal C}+1))^{2q}}{\tilde c}\epsilon^{-2q}\right\rfloor + Nf_{j_{\epsilon}}^{\mathcal S},
\end{equation}
where the last inequality follows from Proposition~\ref{prop:Kstrong} and Remark \ref{bound_iter_strong}.
Now, let us consider any iteration $k < j_{\epsilon}$ and any index $i \in \{1,\ldots,n\}$
such that the line search succeeds to insert a new point in $\hat L$. This implies the following relation involving the hypervolume.
\begin{equation}\label{eq:bound_diff_f}
HI(F(Y^{i+1}(x_k)\cup\{y_k^i+\alpha_k^i d_k^i\})) - HI(F(Y^{i+1}(x_k))) \geq (\gamma (\alpha_k^i)^2)^q \geq \left(\gamma c^2\bar\Delta_k^2\right)^q 
\end{equation}

From Proposition~\ref{prop:boundmu}, we have that $\bar\Delta_{k} \ge \mu_{D_{k-1}} (x_i)/ \hat c$ for all $x_i\in X_{k-1}$, where $\hat c = \max\{c_1+L^{max}\sqrt{n},c_2\}$.
Since, for all $k \in \{1,\dots,j_{\epsilon}\}$,  a point $x_i\in X_{k-1}$ exists such that $\mu(x_i) > \epsilon$ then 
\[
\frac{\epsilon}{\hat c({\cal C}+1)} \leq \bar\Delta_k
\]
so that, we can write
\[
HI(F(Y^{i+1}(x_k)\cup\{y_k^i+\alpha_k^i d_k^i\})) - HI(F(Y^{i+1}(x_k))) \geq   \left(\gamma c^2\bar\Delta_{k}^2\right)^q \geq 
\left(\gamma \frac{c^2}{\hat c^2({\cal C}+1)^2}\epsilon^2\right)^q
\]
Since, in the expansion procedure, $Y^{i+1}(x_k)$ is updated as $Y^{i+1}(x_k)\cup\{y_k^i+\alpha_k^i d_k^i\}$, we obtain,
summing up the above relation over all function evaluations producing an increase in the hypervolume, 
\[
\overline{HI} - HI_0 \geq Nf_{j_{\epsilon}}^{\mathcal S}\left(\gamma \frac{c^2}{\hat c^2({\cal C}+1)^2}\epsilon^2\right)^q,
\]
that is,
\[
Nf_{j_{\epsilon}}^{\mathcal S} \leq \left\lfloor\frac{(\overline{HI} - HI_0)(\hat c({\cal C}+1))^{2q}}{\gamma^qc^{2q}}\epsilon^{-2q}\right\rfloor.
\]
The desired results hence follows from~\eqref{NF}.
\end{proof}

\section{\DFMOlight\ -- a lighter version of \DFMOnew}\label{sec:DFMOlight}

Algorithm \DFMOnew\ has good theoretical properties in terms of iterations and evaluations complexity. However, these good theoretical properties are counterbalanced by a potentially high computational cost since the method explores all the points in the current list of at every iteration. Drawing inspiration from the method proposed in \cite{custodio2021worst}, we can propose a lighter algorithm, namely \DFMOlight. The new algorithm in fact explores around a single point in the current list at every iteration thereby potentially saving computational resources.

\begin{algorithm}[ht!]
\caption{\DFMOlight}
\begin{algorithmic}[1]
\State {\bf data.} $\theta \in (0,1)$, $c \in (0,1)$, $D\in \Re^{n\times r}$, $L_0=\{(x_j,\tilde \alpha_j),\ x_j \in \Re^n,\ \tilde\alpha_j\in \Re^r_{+},\ j = 1,\dots,|L_0| \}$

\For{$k=0,1,\dots$}

\State select $(x_k,\tilde\alpha_k)\in L_k$ according to some criterion

\State set $X_k=\{x_j: (x_j,\tilde\alpha_j)\in  L_k  \}$

\State set 
$
\Delta_k = \max_{i=1,\dots,r}\{\tilde\alpha_{k}^i\}
$

\State compute $Y_k$ and $\alpha_k$ by the \texttt{Approximate\_optimization} $(x_k,\tilde \alpha_k,X_k,c\Delta_k,D)$ 

\If{$X_k = Y_k$ }
\Statex \phantom{8:} \quad{\em (failure)}

\State set $\tilde\alpha_{k+1}^i = \theta\alpha_k^i$ for all $i=1,\dots,r$

\State set $L_{k+1} \leftarrow (L_k\setminus\{(x_k,\tilde\alpha_k)\})\cup \{(x_k,\tilde\alpha_{k+1})\}$

\Else
\Statex\phantom{8:} \quad{\em (success)}

\State set $\tilde\alpha_{k+1}^i = \alpha_k^i$ for all $i=1,\dots,r$

\State set $\tilde L_k = L_k$

\ForAll {$y\in Y_k$}

\State set $\tilde L_k \leftarrow \tilde L_k\cup \{(y,\tilde\alpha_{k+1})\}$
\EndFor
\State set $L_{k+1}$ non dominated points in  $\tilde L_k$

\EndIf

\EndFor

\end{algorithmic}
\end{algorithm}























Indeed, the main difference between algorithms \DFMOnew\ and \DFMOlight\ consists in how the exploration takes place. \DFMOnew\ performs a complete exploration of the points in $L_k$ at every iteration, whereas \DFMOlight\ only consider one point per iteration. Hence, \DFMOnew\ is able to force theoretical properties on the entire list $L_k$. Instead, in \DFMOlight\ we have to devise a method to keep track of the points that are selected during the iterations. This motivates the introduction of a particular kind of sequences, namely {\em linked sequences}, used to study complexity and convergence of \DFMOlight. 

\subsection{Preliminary results}\ \par

The theoretical properties of Algorithm \DFMOlight\ can be analyzed with reference to particular sequences which are implicitly defined by the algorithm itself. Such sequences are referred to in the literature as {\em linked sequences}. 
In particular, 
drawing inspiration from  \cite{custodio2021worst,liuzzi2016derivative}, we report the formal definition of {\em linked sequence}.
\par\medskip

\begin{definition}\label{def:linked}
Let $\{L_k\}$ with $L_k=\{(x_j,\tilde\alpha_j), \ j=1,\dots, |L_k|\}$ be the
sequence of sets of non-dominated points produced by \DFMOlight. 
\begin{enumerate}
    \item We define a
\emph{linked sequence} 
between iterations $0$ and $k$ ($k>0$) as a (finite) sequence  $\{(x_{j_{\tilde k}},\tilde\alpha_{j_{\tilde k}})\}$ with  ${\tilde k}=1,2,\dots, k$, 
such that, for all $\tilde k$: 
\begin{itemize}
    \item[(a)] either the pair
$(x_{j_{\tilde k}},\tilde\alpha_{j_{\tilde k}})\in L_{\tilde k}$ is generated at iteration ${\tilde k}-1$ of the algorithm by the pair $(x_{j_{{\tilde k}-1}},\tilde\alpha_{j_{{\tilde k}-1}})\in L_{{\tilde k}-1}$;

    \item[(b)]  or it results $L_{\tilde k}\ni (x_{j_{\tilde k}},\tilde\alpha_{j_{\tilde k}}) = (x_{j_{{\tilde k}-1}},\tilde\alpha_{j_{{\tilde k}-1}})\in L_{\tilde k-1}$.
\end{itemize}
 Furthermore, case (a) happens  at least for an index $\tilde k$. 
 \item We denote by $\mathbb{L}_0^k$ the set of linked sequences between iteration 0 and $k$.

 \item   We denote by ${\cal L}_i^{0:k}\in \mathbb{L}_0^k$ the $i$-th linked sequence between iterations 0 and $k$. 

 \item \label{def:K_linked}Given a linked sequence ${\cal L}^{0:k} = \{(x_{j_{\tilde k}},\tilde\alpha_{j_{\tilde k}})\}\in \mathbb{L}_0^k$ between iterations $0$ and $k$, we denote by $\cal K$ the set of iteration indices of the linked sequence such that the pair
$(x_{j_{\tilde k}},\tilde\alpha_{j_{\tilde k}})\in L_{\tilde k}$ is generated at iteration ${\tilde k}-1$ of the algorithm by the pair $(x_{j_{{\tilde k}-1}},\tilde\alpha_{j_{{\tilde k}-1}})\in L_{{\tilde k}-1}$, i.e. case (a) of Definition \ref{def:linked} happens. \end{enumerate}
\end{definition}

\par\smallskip\noindent

It could be noted that algorithm \DFMOlight\ generates many linked sequences.

\par\medskip

For algorithm \DFMOlight, the following proposition states that there is a controlled increase of the hypervolume for every successful iteration.
\par\medskip

\begin{proposition}\label{HIincrease_light}
Let the reference point be $\rho = f_{\max}+s\one$ (with $s>0$ sufficiently large) and ${{\cal L}^{0:k} = }\{(x_{j_{\tilde k}},\tilde\alpha_{j_{\tilde k}})\}\in \mathbb{L}_0^k$ be a linked sequence between iterations $0$ and $k$. Let $\cal K$ be the set of iteration indices introduced in Definition \ref{def:linked} point \ref{def:K_linked}. 
Let  
\[
\Delta_{j_{\tilde k}} = \max_{i=1,\dots,r}\{\tilde\alpha^i_{j_{\tilde k}}\}.
\]

Then, for every $\tilde k\in \cal K$ such that iteration $\tilde k$ is successful, we have
    \[
    HI(F(X_{\tilde k})) - HI(F(X_{{\tilde k-1}})) \geq \left\{\begin{array}{ll}
         \left(\gamma\ c^2\Delta_{j_{\tilde k}}^2\right)^q & \ \text{if}\ \Delta_{j_{\tilde k-1}} = \Delta_{j_{\tilde k}} \\[0.6em]
         \left(\gamma\ \Delta_{j_{\tilde k}}^2\right)^q&  \ \text{if}\ \Delta_{j_{\tilde k-1}} < \Delta_{j_{\tilde k}} 
    \end{array}\right.
    \] 
\end{proposition}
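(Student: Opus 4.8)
The plan is to mirror the proof of Proposition~\ref{HIincrease} (the analogous statement for \DFMOnew), adapting the bookkeeping to the linked-sequence setting. Fix a linked sequence ${\cal L}^{0:k}=\{(x_{j_{\tilde k}},\tilde\alpha_{j_{\tilde k}})\}$ and an index $\tilde k\in{\cal K}$ at which iteration $\tilde k$ is successful; by definition of ${\cal K}$, the pair $(x_{j_{\tilde k}},\tilde\alpha_{j_{\tilde k}})\in L_{\tilde k}$ was produced at iteration $\tilde k-1$ by the \texttt{Approximate\_optimization} call starting from $(x_{j_{\tilde k-1}},\tilde\alpha_{j_{\tilde k-1}})$. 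Since iteration $\tilde k-1$ is successful, we have $X_{\tilde k-1}\ne X_{\tilde k}$, so that \texttt{Approximate\_optimization} produced at least one new point: there is an index $j\in\{2,\dots,r+1\}$ and a point $y_{\tilde k-1}^{j}=y_{\tilde k-1}^{j-1}+\alpha_{\tilde k-1}^{j-1}d^{j-1}$ which, by the acceptance test in the inner \texttt{while} loop, satisfies
\[
F(y_{\tilde k-1}^{j}) \not> F(x_t) - \gamma(\nu_{\tilde k-1}^{j-1})^2\one \qquad \forall\ x_t\in Y^{j}(x_{\tilde k-1}),
\]
where $\nu_{\tilde k-1}^{j-1}\ge c\Delta_{\tilde k-1}$ by the choice $\bar\nu = c\Delta_{k}$ passed to the procedure and the rule $\nu^i=\max\{\tilde\alpha^i,\bar\nu\}$. (One can and should take $y_{\tilde k-1}^{j}$ to be precisely the point that becomes $x_{j_{\tilde k}}$, or at least one lying on the accepted chain, so that the relevant stepsizes are the $\tilde\alpha_{j_{\tilde k}}^i$.)

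Next I would invoke Proposition~\ref{HIincrease_gen}(ii) with $Y=Y^{j}(x_{\tilde k-1})$, $y=y_{\tilde k-1}^{j}$, and $\nu=\gamma(\nu_{\tilde k-1}^{j-1})^2$ — note that $F(y)\not> F(x) - \gamma\beta^2\one$ is exactly the hypothesis $F(y)\not> F(x)-\nu\one$ with this $\nu$ — to get
\[
HI(F(Y^{j}(x_{\tilde k-1})\cup\{y_{\tilde k-1}^{j}\})) - HI(F(Y^{j}(x_{\tilde k-1}))) \ge \big(\gamma(\nu_{\tilde k-1}^{j-1})^2\big)^q.
\]
Then I would chain this with the monotonicity relations $HI(F(X_{\tilde k-1}))\le HI(F(Y^{j}(x_{\tilde k-1})))$ and $HI(F(X_{\tilde k}))\ge HI(F(Y^{j}(x_{\tilde k-1})\cup\{y_{\tilde k-1}^{j}\}))$ — the first because $X_{\tilde k-1}=X(x_{\tilde k-1})=Y^{1}(x_{\tilde k-1})\subseteq Y^{j}(x_{\tilde k-1})$ and the hypervolume is monotone under set inclusion, the second because $y_{\tilde k-1}^{j}$ and all of $Y^{j}(x_{\tilde k-1})$ feed into $\tilde L_{\tilde k-1}$ and $X_{\tilde k}$ consists of the non-dominated points of $\tilde L_{\tilde k-1}$, whose hypervolume equals that of $\tilde L_{\tilde k-1}$. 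This yields $HI(F(X_{\tilde k})) - HI(F(X_{\tilde k-1})) \ge (\gamma(\nu_{\tilde k-1}^{j-1})^2)^q$.

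Finally I would resolve the two cases in the statement by bounding $\nu_{\tilde k-1}^{j-1}$ from below in terms of $\Delta_{j_{\tilde k}}=\max_i\tilde\alpha_{j_{\tilde k}}^i$. In the case $\Delta_{j_{\tilde k-1}}=\Delta_{j_{\tilde k}}$ (no expansion increased the maximum stepsize along the accepted chain), we use $\nu_{\tilde k-1}^{j-1}\ge c\Delta_{\tilde k-1}=c\Delta_{j_{\tilde k-1}}=c\Delta_{j_{\tilde k}}$, giving the bound $(\gamma c^2\Delta_{j_{\tilde k}}^2)^q$. In the case $\Delta_{j_{\tilde k-1}}<\Delta_{j_{\tilde k}}$ the maximum stepsize strictly grew, which can only happen because some accepted step $\alpha_{\tilde k-1}^{j-1}$ equals $\Delta_{j_{\tilde k}}$; for that index the acceptance test gives $F(y_{\tilde k-1}^{j})\not> F(x_t) - \gamma\Delta_{j_{\tilde k}}^2\one$, and the same hypervolume argument yields $(\gamma\Delta_{j_{\tilde k}}^2)^q$. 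The main obstacle — really the only delicate point — is the bookkeeping that ties the step size $\nu_{\tilde k-1}^{j-1}$ (resp.\ $\alpha_{\tilde k-1}^{j-1}$) used inside \texttt{Approximate\_optimization} to the stepsize vector $\tilde\alpha_{j_{\tilde k}}$ stored with the generated pair and hence to $\Delta_{j_{\tilde k}}$, exactly as in the proof of Proposition~\ref{HIincrease}; everything else is a direct transcription of that proof with $X_k,X_{k+1},\bar\Delta_k,\bar\Delta_{k+1}$ replaced by $X_{\tilde k-1},X_{\tilde k},\Delta_{j_{\tilde k-1}},\Delta_{j_{\tilde k}}$.
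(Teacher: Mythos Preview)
Your proposal is correct and follows exactly the same route as the paper: the paper's proof of this proposition consists of a single sentence stating that ``the proof follows the same arguments as in the proof of Proposition~\ref{HIincrease},'' and you have faithfully carried out that transcription with the substitutions $X_k,X_{k+1},\bar\Delta_k,\bar\Delta_{k+1}\mapsto X_{\tilde k-1},X_{\tilde k},\Delta_{j_{\tilde k-1}},\Delta_{j_{\tilde k}}$. The only cosmetic wrinkle is the slight index slip in your phrasing (``iteration $\tilde k$ is successful'' versus ``iteration $\tilde k-1$ is successful''), but your argument makes clear you understand that the relevant \texttt{Approximate\_optimization} call occurs at iteration $\tilde k-1$ with selected pair $(x_{j_{\tilde k-1}},\tilde\alpha_{j_{\tilde k-1}})$, which is exactly what the paper intends.
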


\begin{proof}
    The proof follows the same arguments as in the proof of Proposition \ref{HIincrease}.
\end{proof}

\par\medskip

With reference to a linked sequence between iterations $0$ and $k$, i.e. ${{\cal L}^{0:k} =}\{(x_{j_{\tilde k}},\tilde\alpha_{j_{\tilde k}})\}\in\mathbb{L}_0^k$, let us define the cost function
\[
\Phi_{j_{\tilde k}} = -HI(F(X_{{\tilde k}})) + \eta(\Delta_{j_{\tilde k}}^2)^q.
\]

In the following proposition we show that the cost function sufficiently reduces on the linked sequence ${\cal L}_i^{0:k}$ and for every $k\in {\cal K}_i$.
\par\smallskip

\begin{proposition}\label{phidecrease_linked}
Let ${\cal L}_i^{0:k} = \{(x_{j_{\tilde k}},\tilde\alpha_{j_{\tilde k}})\}\in\mathbb{L}_0^k$ be the $i$-th linked sequence. Then, it holds that,
\[
\Phi_{j_{\tilde k}} - \Phi_{j_{\tilde k-1}} \leq -\tilde c (\Delta_{j_{\tilde k}}^{2})^q\quad\forall\ \tilde k\in {\cal K}_i
\]
with 
\begin{equation}\label{ctilde_light}
\tilde c = \min\left\{\eta\frac{1-\theta^{2q}}{\theta^{2q}}, \gamma^q c^{2q}, \gamma^q-\eta\right\} > 0
\end{equation}
where ${\cal K}_i$ is introduced in Definition \ref{def:linked} point \ref{def:K_linked}.
\end{proposition}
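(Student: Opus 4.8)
The plan is to mirror the structure of the proof of Proposition \ref{phidecrease}, but carried out along the $i$-th linked sequence rather than over consecutive iterations of the global list. Fix the linked sequence ${\cal L}_i^{0:k} = \{(x_{j_{\tilde k}},\tilde\alpha_{j_{\tilde k}})\}$ and an index $\tilde k\in{\cal K}_i$. By definition of ${\cal K}_i$, case (a) of Definition \ref{def:linked} occurs at $\tilde k$, so the pair $(x_{j_{\tilde k}},\tilde\alpha_{j_{\tilde k}})$ is produced at iteration $\tilde k-1$ by exploring $(x_{j_{\tilde k-1}},\tilde\alpha_{j_{\tilde k-1}})$ through the \texttt{Approximate optimization} procedure. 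Consequently iteration $\tilde k-1$ of \DFMOlight\ is successful, i.e. $X_{\tilde k}\neq X_{\tilde k-1}$, and the stepsizes satisfy $\tilde\alpha_{j_{\tilde k}}^i=\alpha_{\tilde k-1}^i$. The key relation is that $\Delta_{j_{\tilde k}}$ is exactly the maximum stepsize returned by that procedure, so Proposition \ref{HIincrease_light} applies with the hypervolume increase bounded below by $(\gamma c^2\Delta_{j_{\tilde k}}^2)^q$ or $(\gamma\Delta_{j_{\tilde k}}^2)^q$ according to whether $\Delta_{j_{\tilde k-1}}=\Delta_{j_{\tilde k}}$ or $\Delta_{j_{\tilde k-1}}<\Delta_{j_{\tilde k}}$.

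Next I would split into the same two cases as in Proposition \ref{phidecrease}, now restricted to successful iterations since $\tilde k\in{\cal K}_i$ forces success. In the case $\Delta_{j_{\tilde k}}\le\Delta_{j_{\tilde k-1}}$ I would write
\[
\Phi_{j_{\tilde k}}-\Phi_{j_{\tilde k-1}} = -\bigl(HI(F(X_{\tilde k}))-HI(F(X_{\tilde k-1}))\bigr) + \eta\bigl((\Delta_{j_{\tilde k}}^2)^q - (\Delta_{j_{\tilde k-1}}^2)^q\bigr) \le -(\gamma c^2\Delta_{j_{\tilde k}}^2)^q,
\]
using Proposition \ref{HIincrease_light} for the hypervolume term and the monotonicity $\Delta_{j_{\tilde k}}\le\Delta_{j_{\tilde k-1}}$ to discard the $\eta$ term. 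In the case $\Delta_{j_{\tilde k-1}}<\Delta_{j_{\tilde k}}$ I would instead bound the hypervolume increase by $(\gamma\Delta_{j_{\tilde k}}^2)^q$ and keep the $\eta$ difference, obtaining
\[
\Phi_{j_{\tilde k}}-\Phi_{j_{\tilde k-1}} \le -(\gamma\Delta_{j_{\tilde k}}^2)^q + \eta(\Delta_{j_{\tilde k}}^2)^q - \eta(\Delta_{j_{\tilde k-1}}^2)^q \le -(\gamma^q-\eta)(\Delta_{j_{\tilde k}}^2)^q,
\]
again discarding the (now negative) last term. Taking the minimum of the two constants $\gamma^q c^{2q}$ and $\gamma^q-\eta$ — and noting that the third constant $\eta(1-\theta^{2q})/\theta^{2q}$ in \eqref{ctilde_light} comes from the unsuccessful-iteration case, which does not arise here but is included so that $\tilde c$ matches the one used globally — yields the claimed bound with $\tilde c$ as in \eqref{ctilde_light}, which is strictly positive because $0<\eta<\gamma^q$.

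I expect the only genuine subtlety to be bookkeeping about indices: one must be careful that $\Delta_{j_{\tilde k}}$ denotes the maximum over the stepsize vector $\tilde\alpha_{j_{\tilde k}}$ attached to the point that was just inserted into the linked sequence, and that this coincides with the maximum stepsize $\alpha_{\tilde k-1}$ returned by \texttt{Approximate optimization} at iteration $\tilde k-1$ — which is precisely what makes Proposition \ref{HIincrease_light} applicable with $\Delta_{j_{\tilde k}}$ in its statement. The hypervolume comparisons $HI(F(X_{\tilde k-1}))\le HI(F(Y^j(x_{\tilde k-1})))$ and $HI(F(X_{\tilde k}))\ge HI(F(Y^j(x_{\tilde k-1})\cup\{y^j\}))$ needed for this are already established inside the proof of Proposition \ref{HIincrease} (hence of Proposition \ref{HIincrease_light}), so no new estimate is required; the argument is essentially a transcription of Proposition \ref{phidecrease} with ``iteration $k$'' replaced by ``$\tilde k\in{\cal K}_i$ along the linked sequence''.
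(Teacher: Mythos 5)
There is a genuine gap, and it is precisely the point you dismiss as bookkeeping. You claim that $\tilde k\in{\cal K}_i$ forces iteration $\tilde k-1$ to be successful. That is false. Case (a) of Definition \ref{def:linked} also covers the \emph{failure} branch of \DFMOlight: when the selected point $x_{j_{\tilde k-1}}$ yields $Y_k=X_k$, the algorithm replaces $(x_{j_{\tilde k-1}},\tilde\alpha_{j_{\tilde k-1}})$ by the new pair $(x_{j_{\tilde k-1}},\theta\alpha_{\tilde k-1})$; this is a different pair (the stepsizes changed), so case (b) cannot apply, and the pair is ``generated at iteration $\tilde k-1$'' by the old one, hence $\tilde k\in{\cal K}_i$ with $X_{\tilde k}=X_{\tilde k-1}$ and $\Delta_{j_{\tilde k}}=\theta\Delta_{j_{\tilde k-1}}$. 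The paper's proof handles exactly three cases, $\Delta_{j_{\tilde k}}=\theta\Delta_{j_{\tilde k-1}}$ (unsuccessful), $\Delta_{j_{\tilde k}}=\Delta_{j_{\tilde k-1}}$ and $\Delta_{j_{\tilde k}}>\Delta_{j_{\tilde k-1}}$ (both successful), and the unsuccessful case is where the estimate
\[
\Phi_{j_{\tilde k}}-\Phi_{j_{\tilde k-1}}\leq \eta(\Delta_{j_{\tilde k}})^{2q}-\eta(\Delta_{j_{\tilde k-1}})^{2q}=-\eta\frac{1-\theta^{2q}}{\theta^{2q}}(\Delta_{j_{\tilde k}})^{2q}
\]
produces the constant $\eta(1-\theta^{2q})/\theta^{2q}$ in \eqref{ctilde_light}. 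Your remark that this constant ``does not arise here but is included so that $\tilde c$ matches the one used globally'' is therefore a misreading of ${\cal K}_i$: the case does arise, and omitting it leaves the claimed inequality unproved for a whole class of indices in ${\cal K}_i$.

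This omission is not cosmetic for the rest of the paper either: the downstream complexity argument (Proposition \ref{prop:bound_iter_light}) sums the $\Phi$-decrease over \emph{all} $\tilde k\in{\cal K}_i$, and the stationarity bound $\mu_{D}(x)\leq c_2\Delta$ from Proposition \ref{prop:boundmu} is applied exactly at the unsuccessful iterations of the linked sequence, so the decrease on those iterations is what the complexity bound rests on. The two successful cases you do treat are handled correctly and coincide with the paper's argument (modulo the minor observation that, in a successful iteration, the expansion guarantees $\Delta_{j_{\tilde k}}\geq\Delta_{j_{\tilde k-1}}$, so your case ``$\Delta_{j_{\tilde k}}\leq\Delta_{j_{\tilde k-1}}$'' reduces to equality); to complete the proof you only need to add the unsuccessful case above, after correcting the claim that membership in ${\cal K}_i$ implies success.
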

\begin{proof}
For every $\tilde k\in {\cal K}_i$, recall that $(x_{j_{\tilde k-1}},\tilde\alpha_{j_{\tilde k-1}})\in L_{\tilde k-1}$. Then, only three cases can occur, namely: $\Delta_{j_{\tilde k}} = \theta\Delta_{j_{\tilde k-1}}$, $\Delta_{j_{\tilde k}} = \Delta_{j_{\tilde k-1}}$, and $\Delta_{j_{\tilde k}} > \Delta_{j_{\tilde k-1}}$.  
Hence, in the following we consider these three cases.
\begin{itemize}
    \item $\Delta_{j_{\tilde k}} = \theta\Delta_{j_{\tilde k-1}}$, which means that $X_{\tilde k} = X_{\tilde k-1}$. 
    Then, we can write
    \[
    \Phi_{j_{\tilde k}} - \Phi_{j_{\tilde k-1}} \leq \eta(\Delta_{j_{\tilde k}})^{2q} - \eta(\Delta_{j_{\tilde k-1}})^{2q} = \eta\left(1 - \frac{1}{\theta^{2q}}\right)(\Delta_{j_{\tilde k}})^{2q} = -\eta \frac{1 - \theta^{2q}}{\theta^{2q}}(\bar\Delta_{j_{\tilde k}})^{2q}.
    \]
    \item $\Delta_{j_{\tilde k}} = \Delta_{j_{\tilde k-1}}$, which means the iteration was successful, i.e. ${X_{\tilde k} \neq X_{\tilde k-1}}$. Then, by proposition \ref{HIincrease_light}, in this case we can write,
    \[
    \Phi_{j_{\tilde k}} - \Phi_{j_{\tilde k-1}} \leq -(\gamma c^2\Delta_{j_{\tilde k}}^2)^q.
    \]
    \item $\Delta_{j_{\tilde k}} > \Delta_{j_{\tilde k-1}}$, which again means that the iteration was successful, i.e. ${X_{\tilde k} \neq X_{\tilde k-1}}$. Then, again by proposition \ref{HIincrease_light}, we get
    \[
    \Phi_{j_{\tilde k}} - \Phi_{j_{\tilde k-1}} \leq -(\gamma \Delta_{j_{\tilde k}}^2)^q + \eta\Delta_{j_{\tilde k}}^{2q} - \eta\Delta_{j_{\tilde k-1}}^{2q}\leq -(\gamma^q-\eta)\Delta_{j_{\tilde k}}^{2q}.
    \]
\end{itemize}
Thence, the proof is concluded.
\end{proof}

We are now ready to study the complexity results for algorithm \DFMOlight which are the subject of the following subsection.

\subsection{Worst-case complexity of \DFMOlight}\ \par

The first result concerns the definition of a bound on the first iteration $k_\eps$ such $\mu(x_i)\leq \eps$ for at least a point $x_i\in X_{k_\eps}$.

\begin{proposition}\label{prop:bound_iter_light}
For any $\eps >0$, let $k_{\epsilon}$ be the first iteration of algorithm \DFMOlight\ such that   $\mu (x_{i})\leq\epsilon$ for at least a point $x_i\in X_{k_{\epsilon}}$.
Then,
\[
k_{\epsilon} \leq |\mathbb{L}_0^{k_{\epsilon}-1}|\times\left\lfloor\frac{(\overline{HI} + \Phi_0)(c_2({\cal C}+1))^{2q}}{\tilde c}\epsilon^{-2q}\right\rfloor \leq {\cal O}(|{\cal L}(\eps)|\epsilon^{-2q}),
\]
where $\mathbb{L}_0^{k_\epsilon-1}$ is the set of all linked sequences between iterations 0 and $k_\epsilon-1$ and $|{\cal L}(\eps)| = {\cal O}(|\mathbb{L}_0^{k_\epsilon-1}|)$.
\end{proposition}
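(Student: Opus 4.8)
The plan is to mimic the argument used for \DFMOnew\ in Proposition \ref{prop:Kstrong} and Remark \ref{bound_iter_strong}, but carried out \emph{along linked sequences} rather than on the whole list. First I would fix $\eps>0$ and let $k_\eps$ be the first iteration such that some $x_i\in X_{k_\eps}$ has $\mu(x_i)\le\eps$. By definition of $k_\eps$, for every iteration $k\in\{0,\dots,k_\eps-1\}$ and every point $x_i\in X_k$ we have $\mu(x_i)>\eps$, hence by Assumption \ref{ass:assumption_mu} and Proposition \ref{prop:boundmu} (applied at the point selected at that iteration),
\[
\frac{\eps}{{\cal C}+1} < \mu_{D}(x_k) \le \hat c\,\Delta_{k+1},
\]
where $\hat c = \max\{c_1+L^{max}\sqrt n, c_2\}$ (or, following the statement, simply $c_2$ after absorbing constants), so that $\Delta_{k+1}\ge \eps/(\hat c({\cal C}+1))$ for all $k<k_\eps$.

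Next I would invoke the key structural fact about \DFMOlight: at each iteration exactly one pair of $L_k$ is processed, and the resulting pair(s) belong to linked sequences. Thus every iteration index $k\in\{0,\dots,k_\eps-1\}$ can be attributed to some linked sequence ${\cal L}_i^{0:k_\eps-1}\in\mathbb L_0^{k_\eps-1}$ for which $k+1\in{\cal K}_i$ (case (a) of Definition \ref{def:linked} occurs at that step, since the selected pair spawns the continuation). Consequently
\[
k_\eps = \sum_{i:\,{\cal L}_i^{0:k_\eps-1}\in\mathbb L_0^{k_\eps-1}} |{\cal K}_i| \le |\mathbb L_0^{k_\eps-1}|\cdot \max_i |{\cal K}_i|,
\]
and it remains to bound $|{\cal K}_i|$ for each linked sequence. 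For a fixed linked sequence ${\cal L}_i^{0:k_\eps-1}$, Proposition \ref{phidecrease_linked} gives, for every $\tilde k\in{\cal K}_i$, the sufficient-decrease inequality $\Phi_{j_{\tilde k}}-\Phi_{j_{\tilde k-1}}\le -\tilde c(\Delta_{j_{\tilde k}}^2)^q$. Telescoping along the (finitely many) indices in ${\cal K}_i$ and using that $\Phi$ is bounded below by $-\overline{HI}$ (since $HI\le\overline{HI}$ and $\eta\Delta^{2q}\ge 0$) and that $\Phi_{j_0}\le\Phi_0$, I get $\Phi_0+\overline{HI}\ge \sum_{\tilde k\in{\cal K}_i}\tilde c(\Delta_{j_{\tilde k}}^2)^q$. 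Since every such $j_{\tilde k}$ corresponds to an iteration $\tilde k\le k_\eps-1$ and hence satisfies the lower bound $\Delta_{j_{\tilde k}}\ge \eps/(c_2({\cal C}+1))$ from the first step, each summand is at least $\tilde c\,\eps^{2q}/(c_2({\cal C}+1))^{2q}$, giving
\[
|{\cal K}_i| \le \left\lfloor \frac{(\overline{HI}+\Phi_0)(c_2({\cal C}+1))^{2q}}{\tilde c}\,\eps^{-2q}\right\rfloor.
\]
Combining this with the displayed bound on $k_\eps$ yields exactly the claimed estimate, and since $|\mathbb L_0^{k_\eps-1}|={\cal O}(|{\cal L}(\eps)|)$ we obtain $k_\eps\le{\cal O}(|{\cal L}(\eps)|\eps^{-2q})$.

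The main obstacle I expect is the bookkeeping in the middle step: one must argue carefully that the iteration count $k_\eps$ is genuinely covered by the sets ${\cal K}_i$ of linked sequences, i.e. that each iteration of \DFMOlight\ contributes a ``case (a)'' step to at least one linked sequence (the selected point always generates a successor, whether through failure — step size shrink — or success — new non-dominated points added and the old pair possibly retained), and that no iteration is counted with a stepsize below the $\eps/(c_2({\cal C}+1))$ threshold. A secondary subtlety is matching the constant in the statement ($c_2$ rather than $\hat c$): this requires noting that along a single linked sequence the relevant bound from Proposition \ref{prop:boundmu} can be taken with the constant appropriate to the selected point, and then uniformly replacing it by $\hat c = \max\{c_1+L^{max}\sqrt n, c_2\}$, which is ${\cal O}(n^{1/2})$ — so if the statement insists on $c_2$ one should either read $c_2$ as shorthand for $\hat c$ or restrict attention to the unsuccessful-exploration branch; in any case the order in $n$ and $\eps$ is unaffected.
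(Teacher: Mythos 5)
Your overall skeleton is the same as the paper's: attribute the iterations up to $k_\eps-1$ to linked sequences, bound $|{\cal K}_i|$ for each sequence by telescoping the sufficient decrease of $\Phi$ from Proposition \ref{phidecrease_linked} against the lower bound $-\overline{HI}$, lower-bound the stepsizes via Proposition \ref{prop:boundmu}, and sum over $\mathbb{L}_0^{k_\eps-1}$. The bookkeeping you flag as the ``main obstacle'' is indeed handled in the paper exactly as you sketch (each iteration produces a case~(a) step of some linked sequence, and $k_\eps\leq\sum_i|{\cal K}_i|$), so that part is fine.

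The genuine gap is in the constant, and it is not cosmetic. The statement (and the paper's comparison table) carries $c_2=(L^{\max}+2\gamma)/(2\theta)$, which is independent of $n$; this is what makes the bound ${\cal O}(|{\cal L}(\eps)|\eps^{-2q})$ with no hidden $n$-dependence, in contrast with the ${\cal O}(n^q\eps^{-2q})$ bound for \DFMOnew. You instead apply Proposition \ref{prop:boundmu} with $\hat c=\max\{c_1+L^{\max}\sqrt n,\,c_2\}={\cal O}(n^{1/2})$ at every iteration $k<k_\eps$, which only yields $\Delta_{j_{\tilde k}}\geq \eps/(\hat c({\cal C}+1))$ and hence $k_\eps\leq{\cal O}(n^q|{\cal L}(\eps)|\eps^{-2q})$ --- so your closing remark that ``the order in $n$ is unaffected'' is wrong, since $\hat c^{\,2q}={\cal O}(n^q)$. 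The paper avoids this by using the $c_2$-branch of Proposition \ref{prop:boundmu} \emph{only at unsuccessful iterations} of the linked sequence (where it applies with constant $c_2$), and then transferring that lower bound to the successful iterations through the monotonicity of the steps: for a successful $\tilde k\in{\cal K}_i$ one takes $m(\tilde k)$, the last preceding unsuccessful index, and uses $\Delta_{j_{\tilde k}}\geq\Delta_{j_{m(\tilde k)}}$ (steps are never reduced on success), so every term in the telescoped sum is at least $\tilde c\,\eps^{2q}/(c_2({\cal C}+1))^{2q}$. Your alternative suggestion to ``restrict attention to the unsuccessful-exploration branch'' gestures at this but is not carried out: successful iterations in ${\cal K}_i$ still contribute terms to the sum and must be covered, and the $m(\tilde k)$-chaining is precisely the missing step needed to obtain the bound with $c_2$ as stated.
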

\begin{proof}
Let ${\cal L}_i=\{(x_{j_k},\tilde\alpha_{j_k})\}\in \mathbb{L}_0^{k_\epsilon-1}$ be the $i$-th  linked sequence between iterations 0 and $k_{\epsilon}-1$. Then, it holds that
\[
\epsilon < \mu(x_{j_k}) = \mu(x_{j_k}) -\mu_{D_{j_k}}(x_{j_k}) + \mu_{D_{j_k}}(x_{j_k}) \leq |\mu(x_{j_k}) -\mu_{D_{j_k}}(x_{j_k})| + \mu_{D_{j_k}}(x_{j_k}) \leq ({\cal C}+1)\mu_{D_{j_k}}(x_{j_k}),
\]
that is $\mu_{D_{j_k}}(x_{j_k})> \epsilon/({\cal C}+1)$ for all $k\in{\cal K}_i$. Then, from proposition \ref{prop:boundmu}, we have for all unsuccessful iterations $j_{k-1}$
\begin{equation}\label{boundbardelta_light}
\frac{\epsilon}{{\cal C}+1} < \mu_{D_{j_{k-1}}}(x_{j_{k-1}}) \leq c_2\Delta_{j_{k}}.
\end{equation}

On the other hand, from proposition \ref{phidecrease_linked}, we have
\[
\Phi_{j_k} - \Phi_{j_{k-1}} \leq -\tilde c(\Delta_{j_k}^2)^q,\qquad \forall\ k\in{\cal K}_i.
\]
Then, summing up the above relations and considering that $\{\Phi_{j_k}\}$ is a nonincreasing sequence, we obtain
\[
-\overline{HI} - \bar\Phi_{0} \leq \Phi_{j_{k_{\epsilon}-1}} - \Phi_{j_0} \leq -\sum_{k\in {\cal K}_i,k\neq 0}\tilde c(\Delta_{j_k}^2)^q,
\]
where $\bar\Phi_0$ is the maximum, among all the linked sequences in $\mathbb{L}_0^{k_{\epsilon}-1}$, of the values $\Phi_{j_0}$.
For every $k\in {\cal K}_i$ let us consider 
\begin{itemize}
    \item ${\cal U}_{{\cal K}_i}\cap\{0,\dots,k-1\}$, i.e. unsuccessful iterations up to the $k-1$-th one;
    \item ${\cal S}_{{\cal K}_i}\cap\{0,\dots,k-1\}$, i.e. successful iterations up to the $k-1$-th one.
\end{itemize}
Then, for every successful iteration $k\in{\cal K}_i$, let $m(k)$ be the biggest index belonging to  ${\cal U}_{{\cal K}_i}\cap\{0,\dots,k-1\}$ so that all the iterations from ${m(k)+1}$ to $k$ are successful. Then, we can write
\[
-\overline{HI} - \bar\Phi_0 \leq \Phi_{j_{k_{\epsilon}-1}} - \Phi_{j_0} \leq -\sum_{k\in {\cal K}_i,k\neq 0}\tilde c(\Delta_{j_k}^2)^q = -\sum_{k\in{\cal U}_{{\cal K}_i}}\tilde c(\Delta_{j_k}^2)^q -\sum_{k\in{\cal S}_{{\cal K}_i}}\tilde c(\Delta_{j_k}^2)^q.
\]
Note that, for every successful iteration $k\in {\cal K}_i$, i.e. $k\in {\cal S}_{{\cal K}_i}$, it results  $\Delta_{j_{k}} \geq \Delta_{j_{m(k)}}$, so that we can write
\[
-\overline{HI} - \bar \Phi_0 \leq \Phi_{j_{k_{\epsilon}-1}} - \Phi_{j_0} \leq -\sum_{k\in{\cal U}_{{\cal K}_i}}\tilde c(\Delta_{j_k}^2)^q -\sum_{k\in{\cal S}_{{\cal K}_i}}\tilde c(\Delta_{j_{m(k)}}^2)^q.
\]
Hence, exploiting \eqref{boundbardelta_light}, we can write
\[
-\overline{HI} - \bar\Phi_0 \leq -|{\cal K}_i|\tilde c \left(\frac{\epsilon}{c_2({\cal C}+1)}\right)^{2q},
\]
that is
\[
\overline{HI} + \bar \Phi_0 \geq |{\cal K}_i|\tilde c \left(\frac{\epsilon}{c_2({\cal C}+1)}\right)^{2q},
\]
\begin{equation}\label{boundutile}
|{\cal K}_i| \leq \left\lfloor\frac{(\overline{HI} + \bar\Phi_0)(c_2({\cal C}+1))^{2q}}{\tilde c}\epsilon^{-2q}\right\rfloor.
\end{equation}
Then, in the worst case, it results
\[
k_{\epsilon} \leq \sum_{{\cal L}_i\in\mathbb{L}_0^{k_{\epsilon}-1}}|{\cal K}_i|.
\]
Thus, recalling the bound in \eqref{boundutile}, we have
\[
k_{\epsilon} \leq |\mathbb{L}_0^{k_{\epsilon}-1}|\times\left\lfloor\frac{(\overline{HI} + \bar \Phi_0)(c_2({\cal C}+1))^{2q}}{\tilde c}\epsilon^{-2q}\right\rfloor,
\]
which concludes the proof.
\end{proof}

\par\smallskip

Let us define, for every set of points $X\subset\Re^n$,
\[
\bar\Gamma(X) = \min_{x\in X}\mu(x).
\]

Then, on the basis of the latter complexity result, we can immediately give the following result concerning asymptotic convergence of \DFMOlight.

\par\smallskip

\begin{corollary}
For Algorithm \DFMOlight, a subset $K$ of iteration indices exists such that
{$\lim_{k\to\infty,k\in K}\bar\Gamma(X_k) = 0$, i.e. $\liminf_{k\to\infty}\bar\Gamma(X_k) = 0$.}
\end{corollary}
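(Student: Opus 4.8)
The plan is to derive the asymptotic convergence result as a direct consequence of the iteration-complexity bound in Proposition~\ref{prop:bound_iter_light}. The core observation is that, for \emph{every} tolerance $\eps>0$, Proposition~\ref{prop:bound_iter_light} tells us there exists a \emph{finite} iteration $k_\eps$ at which at least one point $x_i\in X_{k_\eps}$ satisfies $\mu(x_i)\le\eps$; equivalently, $\bar\Gamma(X_{k_\eps})=\min_{x\in X_{k_\eps}}\mu(x)\le\eps$. So the statement $\liminf_{k\to\infty}\bar\Gamma(X_k)=0$ will follow by a standard argument: it cannot happen that $\bar\Gamma(X_k)$ stays bounded away from $0$ for all large $k$, because that would contradict the existence of $k_\eps$ for arbitrarily small $\eps$.

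Concretely, I would argue by contradiction. Suppose $\liminf_{k\to\infty}\bar\Gamma(X_k)>0$; then there exist $\bar\eps>0$ and an index $\bar k$ such that $\bar\Gamma(X_k)>\bar\eps$ for all $k\ge\bar k$. Now apply Proposition~\ref{prop:bound_iter_light} with $\eps=\bar\eps$: it guarantees a finite first iteration $k_{\bar\eps}$ with $\bar\Gamma(X_{k_{\bar\eps}})\le\bar\eps$. Since the sequence $\{\bar\Gamma(X_k)\}$ is nonnegative (Lemma on $\mu$, point~1) and since $k_{\bar\eps}$ is \emph{some} finite index, either $k_{\bar\eps}<\bar k$ — so that the contradiction is automatic once we note that the bound of Proposition~\ref{prop:bound_iter_light} in fact holds for every $\eps$, hence we may shrink $\eps$ below $\bar\eps$ and still obtain a finite witness iteration — or $k_{\bar\eps}\ge\bar k$, which directly contradicts $\bar\Gamma(X_k)>\bar\eps$ for all $k\ge\bar k$. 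Either way we reach a contradiction, so $\liminf_{k\to\infty}\bar\Gamma(X_k)=0$. Extracting the subsequence $K=\{k: \bar\Gamma(X_k)\le 1/m \text{ for the } m\text{-th time}\}$ (formally: for each $m$ pick an index $k_m$ with $\bar\Gamma(X_{k_m})\le 1/m$, using Proposition~\ref{prop:bound_iter_light} with $\eps=1/m$, and take these indices to be strictly increasing, which is possible because infinitely many iterations occur) yields a subset $K$ with $\lim_{k\to\infty,k\in K}\bar\Gamma(X_k)=0$.

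The only point requiring a little care — and the closest thing to an obstacle — is the subtlety that $k_\eps$ is the \emph{first} iteration with $\bar\Gamma(X_{k_\eps})\le\eps$, but after that iteration $\bar\Gamma(X_k)$ could in principle rise again (the algorithm updates the list $L_k$ by keeping non-dominated points, and the minimum criticality over the current set is not obviously monotone). This is why the statement is phrased as a $\liminf$ and not a full limit, and why one passes to a subsequence: for each $m$ one simply invokes Proposition~\ref{prop:bound_iter_light} afresh with $\eps=1/m$ to locate one index $k_m$ where $\bar\Gamma(X_{k_m})\le 1/m$, and needs only that $k_m$ can be chosen larger than $k_{m-1}$. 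The latter holds because, by Proposition~\ref{prop:bound_iter_light}, for \emph{every} $\eps>0$ such an index exists among the (infinitely many) iterations $\{0,1,\dots\}$, so there is no obstruction to choosing the $k_m$ increasing. Thus the proof is essentially a one-line corollary of the preceding complexity bound together with the nonnegativity of $\mu$.

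\begin{proof}
It suffices to show that $\liminf_{k\to\infty}\bar\Gamma(X_k)=0$; the existence of the required subset $K$ then follows by a standard subsequence extraction. Fix any $m\in\mathbb{N}$ and apply Proposition~\ref{prop:bound_iter_light} with $\eps = 1/m$: there exists a finite iteration index $k_{1/m}$ such that $\mu(x_i)\le 1/m$ for at least one point $x_i\in X_{k_{1/m}}$, that is,
\[
\bar\Gamma(X_{k_{1/m}}) = \min_{x\in X_{k_{1/m}}}\mu(x) \le \frac{1}{m}.
\]
Since Algorithm \DFMOlight\ performs infinitely many iterations and the above holds for every $m$, we may choose the indices $k_{1/m}$ to be strictly increasing in $m$; set $K=\{k_{1/1},k_{1/2},\dots\}$. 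By construction, for each $m$,
\[
0 \le \bar\Gamma(X_{k_{1/m}}) \le \frac{1}{m},
\]
where the left inequality follows from point~1 of Lemma~\cite[Lemma 3]{fliege2000steepest} applied to every $x\in X_{k_{1/m}}$. Letting $m\to\infty$ yields $\lim_{k\to\infty,\,k\in K}\bar\Gamma(X_k)=0$, hence $\liminf_{k\to\infty}\bar\Gamma(X_k)=0$, which concludes the proof.
\end{proof}
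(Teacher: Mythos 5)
Your proposal is correct and follows essentially the same route as the paper, whose proof simply invokes Proposition~\ref{prop:bound_iter_light} together with the definition of the limit of a subsequence; you merely spell out the subsequence extraction (applying the proposition with $\eps=1/m$ and using $\mu\ge 0$) that the paper leaves implicit.
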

\begin{proof}
The proof immediately follows from Proposition \ref{prop:bound_iter_light} and the definition of limit of a subsequence. 
\end{proof}
\par\smallskip

\par\smallskip

The following result pertains to the behavior of \DFMOlight\ on every linked sequence. In particular, considering a linked sequence ${\cal L}_i$, we show that the cardinality of the set of iteration indices $k\in{\cal K}_i$ for which $\mu(x_{j_k})> \epsilon$ is bounded by a constant depending on $\eps^{-2q}$. 
\par\smallskip

\begin{proposition}\label{boundK_light}
Let ${\cal L}_i = \{(x_{j_k},\tilde\alpha_{j_k})\}{\in\mathbb{L}_0^\infty}$ be a{n infinite} linked sequence. For any $\epsilon > 0$, consider the following subset of indices:
	\begin{equation}\label{subset_light}
K_{\eps}^i=\left\{\ k\in {\cal K}_i:\ \mu(x_{j_k}) > \epsilon  \ \right\}.
\end{equation}
 Then,
\[
|K_{\eps}^i| \leq {\cal O}(n^q\epsilon^{-2q}).
\]
\end{proposition}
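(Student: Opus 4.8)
The statement to prove is a "per–linked‐sequence" analogue of Proposition~\ref{prop:Kstrong}: along a fixed infinite linked sequence $\mathcal L_i$, the number of indices $k\in\mathcal K_i$ with $\mu(x_{j_k})>\epsilon$ is $\mathcal O(n^q\epsilon^{-2q})$. The natural strategy is to mimic the proof of Proposition~\ref{prop:Kstrong}, but using the telescoping inequality for the cost function \emph{restricted to the linked sequence} (Proposition~\ref{phidecrease_linked}) in place of Proposition~\ref{phidecrease}, and the criticality bound from Proposition~\ref{prop:boundmu} combined with Assumption~\ref{ass:assumption_mu}.

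\textbf{Step 1 (criticality $\Rightarrow$ lower bound on the stepsize).} For every $k\in K_\epsilon^i$ we have $\mu(x_{j_k})>\epsilon$, hence by Assumption~\ref{ass:assumption_mu}, $\mu_{D}(x_{j_k})\ge \mu(x_{j_k})/(\mathcal C+1) > \epsilon/(\mathcal C+1)$. Applying Proposition~\ref{prop:boundmu} to $x_{j_k}$ (in either the successful or unsuccessful branch) gives
\[
\frac{\epsilon}{\mathcal C+1} < \mu_{D}(x_{j_k}) \le \hat c\,\Delta_{j_{k+1}},
\]
where $\hat c = \max\{c_1+L^{\max}\sqrt n,\,c_2\} = \mathcal O(n^{1/2})$, exactly as in \eqref{boundbardelta1}. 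This is the link that converts a criticality violation into a quantitatively large step, and hence (via the hypervolume/cost argument) into progress.

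\textbf{Step 2 (telescoping the cost function along the linked sequence).} By Proposition~\ref{phidecrease_linked}, for all $\tilde k\in\mathcal K_i$ we have $\Phi_{j_{\tilde k}}-\Phi_{j_{\tilde k-1}}\le -\tilde c(\Delta_{j_{\tilde k}}^2)^q$, with $\tilde c>0$ as in \eqref{ctilde_light}. Since $\{\Phi_{j_{\tilde k}}\}$ is bounded below by $-\overline{HI}$ (the hypervolume is bounded above under Assumption~\ref{ass:boundedness}), summing over $\tilde k\in\mathcal K_i$ and taking limits yields $\Phi_{j_0}+\overline{HI}\ge \sum_{\tilde k\in\mathcal K_i}\tilde c(\Delta_{j_{\tilde k}}^2)^q \ge \sum_{k\in K_\epsilon^i}\tilde c(\Delta_{j_{k}}^2)^q$. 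Substituting the bound from Step~1 (note $\Delta_{j_k}\ge \epsilon/(\hat c(\mathcal C+1))$ for $k\in K_\epsilon^i$, after shifting the index so it refers to the step actually produced at the relevant iteration, matching the bookkeeping used in Proposition~\ref{prop:bound_iter_light}), we get
\[
\Phi_{j_0}+\overline{HI} \;\ge\; |K_\epsilon^i|\,\tilde c\left(\frac{\epsilon}{\hat c(\mathcal C+1)}\right)^{2q},
\]
so that $|K_\epsilon^i| \le \big\lfloor \tfrac{(\Phi_{j_0}+\overline{HI})(\hat c(\mathcal C+1))^{2q}}{\tilde c}\,\epsilon^{-2q}\big\rfloor = \mathcal O(n^q\epsilon^{-2q})$, where the final order estimate uses $\hat c=\mathcal O(n^{1/2})$ and that $\tilde c,\mathcal C,\Phi_{j_0},\overline{HI}$ are constants independent of $n$ and $\epsilon$.

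\textbf{Main obstacle.} The delicate point is the index bookkeeping between "the criticality of $x_{j_k}$", "the step $\Delta_{j_k}$ in the linked sequence", and "the step $\Delta_{j_{k+1}}$ that Proposition~\ref{prop:boundmu} actually bounds": Proposition~\ref{prop:boundmu} bounds $\mu_{D}(x_k)$ in terms of the stepsizes \emph{produced} starting from $x_k$, which are the stepsizes attached to the successor in the linked sequence. One must argue, as in the proof of Proposition~\ref{prop:bound_iter_light}, that along the linked sequence these produced stepsizes feed forward correctly — in particular that for each $k\in K_\epsilon^i$ one can charge progress against a step of size at least $\epsilon/(\hat c(\mathcal C+1))$, splitting $\mathcal K_i$ into successful and unsuccessful indices and, on runs of consecutive successful iterations, bounding each $\Delta_{j_k}$ from below by the $\Delta$ at the last preceding unsuccessful iteration (the $m(k)$ device). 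Once this accounting is in place, the estimate is a routine repetition of the Proposition~\ref{prop:Kstrong} / Proposition~\ref{prop:bound_iter_light} computation restricted to a single linked sequence.
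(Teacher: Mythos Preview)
Your approach is correct and coincides with the paper's: combine Assumption~\ref{ass:assumption_mu} with Proposition~\ref{prop:boundmu} to lower-bound a stepsize by $\epsilon/(\hat c(\mathcal C+1))$, then telescope the cost function along the linked sequence via Proposition~\ref{phidecrease_linked} and the lower bound $-\overline{HI}$ on $\Phi_{j_k}$.

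The one caveat concerns the device you invoke to resolve the index shift. The $m(k)$ machinery from Proposition~\ref{prop:bound_iter_light} works \emph{there} because every iteration prior to $k_\epsilon$ has $\mu>\epsilon$, so the last preceding unsuccessful index $m(k)$ automatically inherits the criticality bound and hence $\Delta_{j_{m(k)}}\ge \epsilon/(c_2(\mathcal C+1))$. In the present proposition $K_\epsilon^i$ may be a sparse subset of $\mathcal K_i$, and $m(k)$ need not belong to $K_\epsilon^i$; tracing back therefore does not produce a lower bound on $\Delta_{j_{m(k)}}$ in terms of $\epsilon$. The successful/unsuccessful split is in any case unnecessary here, since you are using $\hat c$ rather than $c_2$, and Proposition~\ref{prop:boundmu} then covers both cases uniformly. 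The cleaner fix is a \emph{forward} shift: for $k\in K_\epsilon^i$ let $\ell(k)$ be the next index of $\mathcal K_i$ strictly greater than $k$ (it exists because the linked sequence is infinite). The point of the linked sequence is unchanged on passive iterations, so $x_{j_{\ell(k)-1}}=x_{j_k}$, and Proposition~\ref{prop:boundmu} applied at iteration $\ell(k)-1$ gives $\Delta_{j_{\ell(k)}}\ge \epsilon/(\hat c(\mathcal C+1))$. Since $K_\epsilon^i\subseteq\mathcal K_i$, the map $k\mapsto\ell(k)$ is injective, and therefore
\[
\bar\Phi_0+\overline{HI}\ \ge\ \sum_{\tilde k\in\mathcal K_i}\tilde c\,\Delta_{j_{\tilde k}}^{2q}\ \ge\ \sum_{k\in K_\epsilon^i}\tilde c\,\Delta_{j_{\ell(k)}}^{2q}\ \ge\ |K_\epsilon^i|\,\tilde c\left(\frac{\epsilon}{\hat c(\mathcal C+1)}\right)^{2q},
\]
which is exactly the estimate you wrote. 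The paper's own proof executes this shift rather tersely (writing $\mu(x_{j_{k-1}})$ directly), but the content and the final bound are the same as yours.
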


\begin{proof}
By Assumption \ref{ass:assumption_mu}, 
for all iterations $k = 0,1,\dots$, we have:
\[
\epsilon < \mu(x_{j_{k-1}})  \leq ({\cal C}+1)\mu_{D_{j_{k-1}}}(x_{j_{k-1}}),
\]
Then, from proposition \ref{prop:boundmu}, we have
\begin{equation}\label{boundbardelta1_light}
\frac{\epsilon}{{\cal C}+1} < \mu_{D_{j_{k-1}}}(x_{j_{k-1}}) \leq \hat c\Delta_{j_k}.
\end{equation}
and from proposition \ref{phidecrease_linked}, we have
\begin{equation}\label{complKeps_light}
\Phi_{j_0}- \Phi_{j_k} \geq \sum_{\tilde k\in {\cal K}_i}\tilde c(\Delta_{{j_{\tilde k}}}^2)^q,
\end{equation}
The sequence $\{\Phi_{j_k}\}$ is bounded from below hence  $\Phi^*$ exists such that:
$$\lim_{k\to\infty}\Phi_{j_k}=\Phi^*\ge-\overline{HI}.$$
Then, from (\ref{complKeps_light}), considering that $K_\epsilon^i\subseteq {\cal K}_i$ {(since $\mu(x_i) > \eps$ for all $x_i\in X_0$)}, we obtain:

\[
\bar \Phi_0+\overline{HI} \geq 
\sum_{\tilde k\in {\cal K}_i}\tilde c(\Delta_{{j_{\tilde k}}}^2)^q\geq 
\sum_{ k\in K_{\eps}^i} \tilde c(\Delta_{{j_k}}^2)^q\geq |K_{\epsilon}^i|\tilde c \frac{\epsilon^{2q}}{\hat c^{2q}({\cal C}+1)^{2q}}
\]
Then, we obtain
\[
|K_{\epsilon}^i| \leq \left\lfloor
\frac{(\bar\Phi_0+\overline{HI})\hat c^{2q}({\cal C}+1)^{2q}}{\tilde c}\epsilon^{-2q}
\right\rfloor,
\]
thus concluding the proof.
\end{proof}

\par\smallskip

Concerning the evaluations complexity of \DFMOlight, and with reference to linked sequence, we have the following result.
\par\smallskip

\begin{proposition}\label{complexity_DFMOlight_nf}
For any $\eps >0$, let $j_{\epsilon}$ be the first iteration of algorithm \DFMOlight\ such that  {$\mu(x_i)\leq\eps$ for at least a point $x_i\in X_{j_\eps}$.}
Then, denoting by $Nf_{j_{\epsilon}}$ the number of function evaluations required by Algorithm \DFMOlight\ up to iteration $j_{\epsilon}{-1}$, we have that
$Nf_{j_{\epsilon}} \leq {\cal O}(n|{\cal L}(\epsilon)|\epsilon^{-2q})$. In particular,
\[
Nf_{j_{\epsilon}} \leq {\cal O}(n) |\mathbb{L}_0^{j_{\epsilon}{-1}}|  \left\lfloor\frac{(\overline{HI} + \bar\Phi_0)(c_2({\cal C}+1))^{2q}}{\tilde c}\epsilon^{-2q}\right\rfloor + \left\lfloor\frac{(\overline{HI} - HI_0)(c_2({\cal C}+1))^{2q}}{\gamma^qc^{2q}}\epsilon^{-2q}\right\rfloor,
\]
where $\tilde c$  and $c_2$ are given by~\eqref{ctilde} and in proposition \eqref{prop:boundmu}, respectively.
\end{proposition}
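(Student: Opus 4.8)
The plan is to follow the scheme of the proof of Proposition~\ref{prop:NF_DFMOnew}, adapted to the fact that \DFMOlight\ explores a single point per iteration and that progress must therefore be tracked along the linked sequences of Definition~\ref{def:linked}. First I would split the total number of evaluations as $Nf_{j_\eps} = Nf_{j_\eps}^{\mathrm f} + Nf_{j_\eps}^{\cal S}$, where $Nf_{j_\eps}^{\mathrm f}$ counts the evaluations of probes that fail the sufficient-non-domination test inside \texttt{Approximate\_optimization} (and therefore enlarge no set), while $Nf_{j_\eps}^{\cal S}$ counts those of probes that pass it (and hence add a point); the two contributions are then bounded separately.

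For $Nf_{j_\eps}^{\mathrm f}$: at every iteration \DFMOlight\ calls \texttt{Approximate\_optimization} once, on a single point, and along each of the $r$ directions the inner \texttt{while} loop produces exactly one failing probe (the one that stops the loop), so at most $r={\cal O}(n)$ failing evaluations per iteration. Since $j_\eps\le k_\eps$, Proposition~\ref{prop:bound_iter_light} yields $Nf_{j_\eps}^{\mathrm f}\le r\,j_\eps\le{\cal O}(n)\,|\mathbb{L}_0^{j_\eps-1}|\,\lfloor(\overline{HI}+\bar\Phi_0)(c_2({\cal C}+1))^{2q}\tilde c^{-1}\eps^{-2q}\rfloor$, which is the first term of the claimed bound.

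For $Nf_{j_\eps}^{\cal S}$: a successful probe performed at an iteration $k<j_\eps$ along a direction $d_k^i$ uses a step $\alpha_k^i\ge\nu^i\ge c\Delta_k$ and, by Proposition~\ref{HIincrease_gen}(ii) and the update $Y^{i+1}(x_k)\leftarrow Y^{i+1}(x_k)\cup\{y_k^i+\alpha_k^i d_k^i\}$, increases $HI(F(X_k))$ by at least $(\gamma c^2\Delta_k^2)^q$. The crucial step — and the one I expect to be the main obstacle, since unlike in \DFMOnew\ not every list point is explored at iteration $k$ and so this cannot be read off directly from Proposition~\ref{prop:boundmu} — is to bound $\Delta_k$ from below by $\eps/(c_2({\cal C}+1))$ uniformly over $k<j_\eps$. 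I would argue that for $k<j_\eps$ every point of $X_k$, hence the selected $x_k$, has $\mu(x_k)>\eps$, so $\mu_{D_k}(x_k)>\eps/({\cal C}+1)$ by Assumption~\ref{ass:assumption_mu}; then, tracing the linked sequence containing $(x_k,\tilde\alpha_k)$ backwards and observing that the maximal stepsize $\Delta$ is non-decreasing whenever an iteration is successful or the pair is inherited unchanged and is multiplied by $\theta<1$ only at unsuccessful iterations, one gets $\Delta_k\ge\Delta_m'$, where $m<k$ is the last unsuccessful iteration on that linked sequence and $\Delta_m'$ the maximal stepsize it produces; finally, the $Y(x_m)=X(x_m)$ branch of Proposition~\ref{prop:boundmu} (applied at $m$, where $\mu(x_m)>\eps$ as well) gives $\eps/({\cal C}+1)<\mu_{D_m}(x_m)\le c_2\Delta_m'\le c_2\Delta_k$. (Should no unsuccessful iteration precede $k$ on the linked sequence, $\Delta_k$ is bounded below by the initial stepsize of the root pair, so the same inequality holds for all sufficiently small $\eps$ and only finitely many such probes can occur, a case absorbed into the constants.) Consequently every successful probe with $k<j_\eps$ raises the hypervolume by at least $(\gamma c^2\eps^2 c_2^{-2}({\cal C}+1)^{-2})^q$; summing over all of them and using that the cumulative increase of $HI(F(X_k))$ is bounded above by $\overline{HI}-HI_0$ gives $Nf_{j_\eps}^{\cal S}\le\lfloor(\overline{HI}-HI_0)(c_2({\cal C}+1))^{2q}\gamma^{-q}c^{-2q}\eps^{-2q}\rfloor$.

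Summing the two bounds yields the displayed estimate; since $|\mathbb{L}_0^{j_\eps-1}|={\cal O}(|{\cal L}(\eps)|)$ and the remaining constants do not depend on $\eps$, this also gives $Nf_{j_\eps}\le{\cal O}(n\,|{\cal L}(\eps)|\,\eps^{-2q})$.
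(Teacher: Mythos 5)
Your proposal follows essentially the same route as the paper's proof: the same split into evaluations that fail versus succeed in increasing the hypervolume, the first term bounded by $r$ evaluations per iteration combined with Proposition~\ref{prop:bound_iter_light}, and the second term obtained by lower-bounding $\Delta_k$ through the last unsuccessful iteration on the relevant linked sequence via the $c_2$ branch of Proposition~\ref{prop:boundmu} and telescoping the hypervolume increase against $\overline{HI}-HI_0$. Your explicit treatment of the case with no preceding unsuccessful iteration (each such successful probe still raises the hypervolume by a fixed $\eps$-independent amount, so their number is absorbed into the constants) is a minor refinement of a point the paper leaves implicit, not a different approach.
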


\begin{proof}
First, let us partition the set of iteration indices $\{0,\dots,j_{\epsilon}-1\}$ into $\mathcal S_{j_{\epsilon}}$ and $\mathcal U_{j_{\epsilon}}$ such that
\[
k \in \mathcal S_{j_{\epsilon}} \, \Leftrightarrow \,  L_k \not= L_{k-1}, \quad k\in \mathcal U_{j_{\epsilon}} \, \Leftrightarrow \,  L_k = L_{k-1}, \quad \mathcal S_{j_{\epsilon}} \cup \mathcal U_{j_{\epsilon}} = \{0,\dots,j_{\epsilon}-1\},
\]
that is, $\mathcal S_{j_{\epsilon}}$ and $\mathcal U_{j_{\epsilon}}$ contain the successful and unsuccessful iterations up to $j_{\epsilon}-1$, respectively.



When the algorithm explores a new point, the latter can either succeed to increase the hypervolume or fail to do so. Let us then define $Nf_{j_{\epsilon}}^{\mathcal S}$ as the total number of function evaluations related to points which succeed to increase the hypervolume up to iteration $j_{\epsilon}$. Note that, at each iteration, the maximum number of function evaluations related to points which fail to increase the hypervolume is $r$.
From the instructions of the algorithm, we can write
\begin{equation}\label{NF_light}
Nf_{j_{\epsilon}} \leq r\ {(j_{\epsilon}-1)} + Nf_{j_{\epsilon}}^{\mathcal S} \le {\cal O}(n) |\mathbb{L}_0^{j_{\epsilon}{-1}}| \left\lfloor\frac{(\overline{HI} + \bar\Phi_0)(c_2({\cal C}+1))^{2q}}{\tilde c}\epsilon^{-2q}\right\rfloor + Nf_{j_{\epsilon}}^{\mathcal S},
\end{equation}
where the last inequality follows from Proposition~\ref{prop:bound_iter_light}.

Now, { for any iteration $k < j_{\epsilon}$, let us consider the linked sequence ${\cal L}_i = \{(x_{j_{\tilde k}},\tilde\alpha_{j_{\tilde k}})\}\in \mathbb{L}_0^{j_\eps-1}$ such that $k\in {\cal K}_i$. Let us consider} any index $i \in \{1,\ldots,r\}$
such that the {\tt Approximate\_optimization} procedure succeeds to insert a new point in {$Y^{i+1}$} \st{$\hat L$}. This implies the following relation involving the hypervolume.

\begin{equation}\label{eq:bound_diff_f_light}
HI({F(Y^{i+1}}\cup\{y+\alpha d_{j_k}^i\})) - HI({F(Y^{i+1})}) \geq {(\gamma \alpha^2)^q} \geq \left(\gamma c^2\Delta_{j_k}^2\right)^q. 
\end{equation}
Let us define the index $m(k)$ as follows:
\begin{itemize}
\item if $\mathcal U_{j_{\epsilon}}\cap \{0,\ldots,k-1\}\not= \emptyset$, then $m(k)$ is the largest index of $\mathcal U_{j_{\epsilon}}\cap \{0,\ldots,k-1\}$;
\item otherwise, $m(k) = 0$.
\end{itemize}
The instructions of the algorithm guarantee that ${\Delta}_{j_k} \ge {\Delta}_{j_{m(k)+1}}$.
Hence, from~\eqref{eq:bound_diff_f_light}, it follows that
\[
HI({F(Y^{i+1}}\cup\{y+\alpha d_{j_k}^i\})) - HI({F(Y^{i+1})}) \geq \left(\gamma c^2\Delta_{j_k}^2\right)^q \geq  \left(\gamma c^2\Delta_{j_{m(k)+1}}^2\right)^q 
\]
From Proposition~\ref{prop:boundmu}, we have that {$\mu_{D_{j_{m(k)}}}/c_2\leq \Delta_{j_{m(k)+1}}$.} \st{$\Delta_{m(k)+1} \ge \mu_{D_k} (x_k)/ c_2$.}
Since $\mu_{D_k} (x_i) > \epsilon$ for all $k \in \{0,\dots,j_{\epsilon}-1\}$ {and all $x_i\in X_k$,} we have
\[
\frac{\epsilon}{c_2({\cal C}+1)} \leq \Delta_{j_{m(k)+1}}
\]
so that, we can write
\[
HI({F(Y^{i+1}}\cup\{y+\alpha d_{j_k}^i\})) - HI({F(Y^{i+1})})  \geq   \left(\gamma c^2\Delta_{m(k)+1}^2\right)^q \geq 
\left(\gamma \frac{c^2}{c_2^2({\cal C}+1)^2}\epsilon^2\right)^q
\]

By the instructions of the expansion procedure, we obtain,
summing up the above relation over all function evaluations producing an increase in the hypervolume,  
\[
\overline{HI} - HI_0 \geq Nf_{j_{\epsilon}}^{\mathcal S}\left(\gamma \frac{c^2}{c_2^2({\cal C}+1)^2}\epsilon^2\right)^q,
\]
that is,
\[
Nf_{j_{\epsilon}}^{\mathcal S} \leq \left\lfloor\frac{(\overline{HI} - HI_0)(c_2({\cal C}+1))^{2q}}{\gamma^qc^{2q}}\epsilon^{-2q}\right\rfloor.
\]
The desired results hence follows from~\eqref{NF_light}.
\end{proof}

\section{Special instances of \DFMOlight}\label{sec:special_instances}
In this section we provide an analysis of two special instances of \DFMOlight. In particular, we study the following two algorithms obtained from \DFMOlight\ by selecting the current point according to the following criteria.
\begin{enumerate}
    \item \DFMOmin: at every iteration $k$ the selected pair $(x_k,\tilde\alpha_k)\in\tilde L_k$ is  the one with smallest maximum stepsize.

    \item \DFMOmax: at every iteration $k$ the selected pair $(x_k,\tilde\alpha_k)\in\tilde L_k$ is  the one with highest maximum stepsize.
\end{enumerate}

\subsection{\DFMOmin} In this section we consider the special instance of \DFMOlight\ that, at every iteration $k$, selects (one of) the pair(s) $(x_k,\tilde\alpha_k)\in L_k$ such that its maximum stepsize $\Delta_k$ is the minimum of the maximum stepsizes among all the pairs in the current list $L_k$. In particular, let us define, for every $k$,
\[
\xi_k \stackrel{\triangle}{=} \min_{(x_j,\tilde\alpha_j)\in L_k} \max_{i=1,\dots,r}\tilde\alpha_j^i.
\]
Then, the selected pair $(x_k,\tilde\alpha_k)\in L_k$ is one such that
\[
\xi_k = \Delta_k = \max_{i=1,\dots,r}\{\tilde\alpha_k^i\}.
\]
{Note an important difference between sequences $\{\bar\Delta_k\}$ and $\{\xi_k\}$. The former one is generated by defining $\bar\Delta_{k+1}$ using the final list at iteration $k$, i.e. $\tilde L_k$, whereas the latter one is generated by defining $\xi_k$ using the initial list at iteration $k$, i.e. $L_k$.}

With the above selection strategy, we are able to identify a (very) specific linked sequence, i.e. the one obtained by always analyzing the point with the minimum of the maximum steplengths.

Let us define the improvement function $\Phi$ at iteration $k$ as follows
\[
\Phi_k = -HI(F(X_k)) + \eta\xi_k^{2q}.
\]
Thanks to the definition of algorithm \DFMOmin, in particular the fact that at each iteration  one of the points with minimum of maximum stepsizes is selected,  
it is possible to characterize the behavior of $\Phi_k$ at each iteration and not only in those belonging to the set  $\cal K$.

\begin{proposition}\label{phi_xi_relation}
For every iteration $k=1,\dots$, it results
\[
\Phi_k-\Phi_{k-1} \leq -\tilde c\xi_k^{2q},
\]
where $\tilde c$ is defined in Proposition \ref{phidecrease}.
\end{proposition}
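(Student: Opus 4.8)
The plan is to transcribe the three-case argument of the proof of Proposition~\ref{phidecrease}, replacing $\bar\Delta_k$ by the global minimum stepsize $\xi_k$ and exploiting that \DFMOmin\ always processes a pair $(x_k,\tilde\alpha_k)\in L_k$ with $\max_i\tilde\alpha_k^i=\xi_k$. Before splitting into cases I would record two monotonicity facts for $\{\xi_k\}$. If iteration $k-1$ is \emph{unsuccessful}, then no direction is expanded in the \texttt{Approximate Optimization} call, so $\alpha_{k-1}^i=\max\{\tilde\alpha_{k-1}^i,c\xi_{k-1}\}$ for every $i$ and $\max_i\alpha_{k-1}^i=\xi_{k-1}$; the pair $(x_{k-1},\theta\alpha_{k-1})$ that replaces $(x_{k-1},\tilde\alpha_{k-1})$ then has maximum stepsize $\theta\xi_{k-1}<\xi_{k-1}$, while every other pair of $L_k$ still has maximum stepsize $\ge\xi_{k-1}$, whence $\xi_k=\theta\xi_{k-1}$; since moreover $X_k=X_{k-1}$, we get $\Phi_k-\Phi_{k-1}=\eta(\xi_k^{2q}-\xi_{k-1}^{2q})=-\eta\tfrac{1-\theta^{2q}}{\theta^{2q}}\xi_k^{2q}\le-\tilde c\,\xi_k^{2q}$. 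If iteration $k-1$ is \emph{successful}, then $L_k$ is contained in $L_{k-1}\cup\{(y,\alpha_{k-1}):y\in Y_{k-1}\}$ and every pair there has maximum stepsize at least $\xi_{k-1}$ (the old ones by definition of $\xi_{k-1}$, the new ones because $\max_i\alpha_{k-1}^i\ge\max_i\max\{\tilde\alpha_{k-1}^i,c\xi_{k-1}\}=\xi_{k-1}$); hence $\xi_k\ge\xi_{k-1}$, and the successful case splits into $\xi_k=\xi_{k-1}$ and $\xi_k>\xi_{k-1}$.

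For $\xi_k=\xi_{k-1}$ I would reproduce the hypervolume-increase argument of Proposition~\ref{HIincrease}: the first point $w$ accepted by the \texttt{Approximate Optimization} call is generated along some direction $i_0$ with step $\nu^{i_0}\ge c\Delta_{k-1}=c\xi_{k-1}$ and satisfies $F(w)\not>F(x_t)-\gamma(\nu^{i_0})^2\one$ for every $x_t\in Y^{i_0+1}(x_{k-1})$. Using $X_{k-1}=Y^1(x_{k-1})\subseteq Y^{i_0+1}(x_{k-1})$, the inclusion $Y^{i_0+1}(x_{k-1})\cup\{w\}\subseteq Y_{k-1}$, and $HI(F(X_k))=HI(F(X_{k-1}\cup Y_{k-1}))\ge HI(F(Y_{k-1}))$ (taking non-dominated points does not change the hypervolume), Proposition~\ref{HIincrease_gen}(ii) gives $HI(F(X_k))-HI(F(X_{k-1}))\ge(\gamma c^2\xi_k^2)^q$, so $\Phi_k-\Phi_{k-1}\le-(\gamma c^2\xi_k^2)^q\le-\tilde c\,\xi_k^{2q}$.

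The sub-case $\xi_k>\xi_{k-1}$ is where the argument of Proposition~\ref{HIincrease} must be adapted, since here the larger minimum is produced not by a large step but by the elimination of small-stepsize pairs through Pareto domination. The key step I would carry out is to show that a \emph{new} pair always survives into $L_k$: the point $w$ above is dominated by no point of $X_{k-1}$ (otherwise the acceptance test it passed would be violated), and since Pareto dominance is acyclic on the finite set $Y_{k-1}$, following the domination relation from $w$ one reaches a non-dominated point of $Y_{k-1}$, whose pair carries the stepsize vector $\alpha_{k-1}$ and hence has maximum stepsize $\max_i\alpha_{k-1}^i$; therefore $\xi_k\le\max_i\alpha_{k-1}^i$. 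Choosing $i^\star$ with $\alpha_{k-1}^{i^\star}=\max_i\alpha_{k-1}^i\ge\xi_k>\xi_{k-1}\ge\max\{\tilde\alpha_{k-1}^{i^\star},c\xi_{k-1}\}=\nu^{i^\star}$ shows that direction $i^\star$ was expanded, so the last point $p$ accepted along $i^\star$ was produced with step $\alpha_{k-1}^{i^\star}\ge\xi_k$ and satisfies $F(p)\not>F(x_t)-\gamma(\alpha_{k-1}^{i^\star})^2\one$ for all $x_t\in Y^{i^\star+1}(x_{k-1})$; the same sandwiching as above together with Proposition~\ref{HIincrease_gen}(ii) yields $HI(F(X_k))-HI(F(X_{k-1}))\ge(\gamma\xi_k^2)^q$, whence $\Phi_k-\Phi_{k-1}\le-(\gamma^q-\eta)\xi_k^{2q}\le-\tilde c\,\xi_k^{2q}$. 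Collecting the three cases and recalling the definition~\eqref{ctilde} of $\tilde c$ concludes the proof.

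I expect the main obstacle to be precisely this last sub-case: the failure case and the case $\xi_k=\xi_{k-1}$ are essentially routine rewritings of Propositions~\ref{HIincrease} and~\ref{phidecrease}, but proving $\xi_k\le\max_i\alpha_{k-1}^i$ when the minimum stepsize strictly increases relies on the survival-of-a-new-pair argument and on the acyclicity of Pareto dominance over the current list, which have no direct counterpart in the \DFMOnew\ analysis and must be set up carefully.
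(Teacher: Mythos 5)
Your proof is correct and follows essentially the same route as the paper: the same three-case decomposition ($\xi_k=\theta\xi_{k-1}$, $\xi_k=\xi_{k-1}$, $\xi_k>\xi_{k-1}$) and the same cost-function decrease via the sufficient hypervolume increase, which the paper obtains by simply invoking Proposition \ref{HIincrease_light} for the two successful cases. Your explicit monotonicity facts for $\{\xi_k\}$ and the survival-of-a-new-pair (acyclicity) argument in the sub-case $\xi_k>\xi_{k-1}$ merely fill in details the paper leaves implicit, without changing the approach.
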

\begin{proof}
For every $k$, only three cases can occur, namely: $\xi_k = \theta\xi_{k-1}$, $\xi_k = \xi_{k-1}$, and $\xi_k > \xi_{k-1}$.  
Hence, in the following we consider these three cases.
\begin{itemize}
    \item $\xi_k = \theta\xi_{k-1}$, which means that $X_{k} = X_{k-1}$. 
    Then, we can write
    \[
    \Phi_{k} - \Phi_{k-1} = \eta\xi_k^{2q} - \eta\xi_{k-1}^{2q} = \eta\left(1 - \frac{1}{\theta^{2q}}\right)\xi_k^{2q} = -\eta \frac{1 - \theta^{2q}}{\theta^{2q}}\xi_k^{2q}.
    \]
    \item $\xi_k = \xi_{k-1}$, which means the iteration was successful, i.e. $X_{k} \neq X_{k-1}$. Then, by proposition \ref{HIincrease_light}, in this case we can write,
    \[
    \Phi_{k} - \Phi_{k-1} \leq -(\gamma c^2\xi_{k}^2)^q.
    \]
    \item $\xi_k > \xi_{k-1}$, which again means that the iteration was successful, i.e. $X_{k} \neq X_{k-1}$. Then, again by proposition \ref{HIincrease_light}, we get
    \[
    \Phi_{k} - \Phi_{k-1} \leq -(\gamma \xi_k^2)^q + \eta\xi_k^{2q} - \eta\xi_{k-1}^{2q}\leq -(\gamma^q-\eta)\xi_k^{2q}.
    \]
\end{itemize}
Hence, the proof is concluded.
\end{proof}
\par\smallskip

\begin{proposition}\label{boundK_min}
For any $\epsilon > 0$, assume that $\mu(x_i)>\eps$ for all $x_i\in X_0$. Let $j_\epsilon$ be the first iteration such that 
\[
\mu(x_{j_\epsilon}) \leq \epsilon
\]
Then,
\[
j_{\eps} \leq {\cal O}(\epsilon^{-2q}).
\]
\end{proposition}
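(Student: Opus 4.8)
The key observation is that with the \DFMOmin\ selection rule, the sequence $\{\xi_k\}$ becomes a genuine scalar descent-type quantity for the whole algorithm, not merely along a single linked sequence. First I would establish that the improvement function $\Phi_k = -HI(F(X_k)) + \eta\xi_k^{2q}$ is bounded from below by $-\overline{HI}$ (since $\eta\xi_k^{2q}\ge 0$ and $HI(F(X_k))\le\overline{HI}$), exactly as in the analysis of \DFMOnew. Then Proposition \ref{phi_xi_relation} gives the telescoping inequality
\[
\Phi_0 + \overline{HI} \;\ge\; \Phi_0 - \Phi_{j_\epsilon} \;\ge\; \sum_{k=1}^{j_\epsilon} \tilde c\,\xi_k^{2q},
\]
so the whole game reduces to lower-bounding $\xi_k$ for every $k<j_\epsilon$ by a quantity proportional to $\epsilon$.

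The second step is to produce that lower bound on $\xi_k$. For each $k<j_\epsilon$, by hypothesis $\mu(x_i)>\epsilon$ for \emph{every} $x_i\in X_0$, and I would argue (as done implicitly for \DFMOlight) that every point ever placed in a list $L_k$ with $k<j_\epsilon$ still has criticality measure above $\epsilon$ — since $j_\epsilon$ is the \emph{first} iteration at which some selected point drops below $\epsilon$, and the selected point at iteration $k$ is the one with minimum maximum-stepsize, one checks that no point in $X_k$ can have $\mu(x_i)\le\epsilon$ for $k<j_\epsilon$ without contradicting minimality / the definition of $j_\epsilon$. Given that, Assumption \ref{ass:assumption_mu} yields $\epsilon < \mu(x_k) \le (\mathcal C+1)\mu_{D_{k}}(x_{k})$ for the selected point, and Proposition \ref{prop:boundmu} converts this into $\xi_{k+1} = \Delta_{k+1} \ge \mu_{D_k}(x_k)/\hat c \ge \epsilon/(\hat c(\mathcal C+1))$ for unsuccessful iterations, and the analogous bound for successful ones, with $\hat c = \max\{c_1+L^{\max}\sqrt n, c_2\}$.

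Combining the two steps: substituting $\xi_k \ge \epsilon/(\hat c(\mathcal C+1))$ into the telescoped sum gives
\[
\Phi_0 + \overline{HI} \;\ge\; j_\epsilon\,\tilde c\,\frac{\epsilon^{2q}}{(\hat c(\mathcal C+1))^{2q}},
\]
hence $j_\epsilon \le \big\lfloor (\Phi_0+\overline{HI})(\hat c(\mathcal C+1))^{2q}/\tilde c\big\rfloor \le \mathcal O(\epsilon^{-2q})$, which is the claimed bound (with the $n$-dependence absorbed into the constant since $\hat c = \mathcal O(n^{1/2})$, so more precisely $\mathcal O(n^q\epsilon^{-2q})$).

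\textbf{Main obstacle.} The delicate point is the bookkeeping in the second step: carefully justifying that for all $k<j_\epsilon$ the \emph{selected} point $x_k$ satisfies $\mu(x_k)>\epsilon$, i.e. that the first time \emph{any} list point becomes $\epsilon$-critical it is indeed the one selected by the min-stepsize rule (or at least that one can bound $j_\epsilon$ this way). This requires using the interplay between the selection rule and the definition of $j_\epsilon$ — one must be sure that dominated-out points and newly inserted points during successful iterations do not silently introduce an $\epsilon$-critical point that the rule never examines before iteration $j_\epsilon$. Once that is pinned down, the rest is the same telescoping argument already used for \DFMOnew\ in Proposition \ref{prop:Kstrong}, now applied to the scalar sequence $\{\xi_k\}$ rather than $\{\bar\Delta_k\}$, so it goes through essentially verbatim.
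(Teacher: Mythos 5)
Your overall route is exactly the paper's: telescope the cost function $\Phi_k=-HI(F(X_k))+\eta\,\xi_k^{2q}$ using Proposition \ref{phi_xi_relation} together with $\Phi_k\ge -\overline{HI}$, then lower-bound every $\xi_k$ up to iteration $j_\eps$ by $\eps/(\hat c({\cal C}+1))$ via Assumption \ref{ass:assumption_mu} and Proposition \ref{prop:boundmu}, and divide; the resulting bound (with the $\eps^{-2q}$ factor that was dropped in your final display, clearly a typo) is the one in the paper.

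The one place where your plan deviates is precisely the step you flag as delicate, and there it is not quite right on two counts. First, you neither need nor can in general prove that \emph{every} point in $X_k$, $k<j_\eps$, has $\mu>\eps$: the min-stepsize rule says nothing about the criticality of never-selected list points, and the definition of $j_\eps$ only constrains the \emph{selected} points $x_0,\dots,x_{j_\eps-1}$ — which is all the argument requires. Second, the identification $\xi_{k+1}=\Delta_{k+1}$ is false in general: $\Delta_{k+1}$ is the stepsize produced by exploring the selected point $x_k$, whereas $\xi_{k+1}$ is the minimum over the whole list $L_{k+1}$, and after a successful (expanding) iteration the minimum may be attained by a different pair. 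The correct bookkeeping, which is what the paper does, is to note that the pair attaining $\xi_k$ had its stepsize last set by the exploration performed at some earlier iteration $h<k$ around the then-selected point $x_h$ (the untouched pairs of $L_0$ being the reason for the hypothesis $\mu(x_i)>\eps$ on $X_0$); since $h<j_\eps$ gives $\mu(x_h)>\eps$, Proposition \ref{prop:boundmu} yields $\eps/({\cal C}+1)<\mu_{D_h}(x_h)\le\hat c\,\xi_k$, and your telescoping argument then goes through verbatim.
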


\begin{proof}
Since $j_\eps$ is the first iteration such that $\mu(x_{j_\eps}) \leq\eps$, then we have that $\mu(x_k) > \eps$ for all $k=0,\dots,j_\eps -1$.
By Assumption \ref{ass:assumption_mu} {and recalling that $\mu(x_i)>\eps$ for all $x_i\in X_0$}, 
for all iterations $k = 1,\dots$, we have:
\[
\epsilon < \mu(x_{k-1})  \leq ({\cal C}+1)\mu_{D_{k-1}}(x_{{k-1}}).
\]
Let us now consider an iteration $k<j_\eps$, then, from proposition \ref{prop:boundmu}, we can write
\begin{equation}\label{boundbardelta1_min}
\frac{\epsilon}{{\cal C}+1} < \mu_{D_{{h}}}(x_{{h}}) {\leq \hat c\xi_{k}}
\end{equation}
for some iteration $h< k$.

Now, from proposition \ref{phidecrease_linked} and recalling that $\Phi^*$ exists such that $\Phi_k\geq\Phi^*$ and that $\Phi^*\geq-\overline{HI}$, we have
\begin{equation}\label{complKeps_min}
\Phi_0+\overline{HI}\geq \Phi_0 - \Phi^*\geq \Phi_{0}- \Phi_{j_{\eps}} \geq \sum_{k=1}^{j_{\eps}}\tilde c(\xi_{k}^2)^q,
\end{equation}
Then, from (\ref{boundbardelta1_min}) and \eqref{complKeps_min}, we obtain:

\[
\Phi_0+\overline{HI} \geq 
\sum_{k=1}^{j_{\eps}}\tilde c(\xi_{k}^2)^q\geq 
j_{\eps}\,\tilde c \frac{\epsilon^{2q}}{\hat c^{2q}({\cal C}+1)^{2q}}
\]
Then, we obtain
\[
j_\eps \leq \left\lfloor\frac{(\Phi_0+\overline{HI})(\hat c({\cal C}+1))^{2q}}{\tilde c}\eps^{-2q}\right\rfloor 
\]
concluding the proof.
\end{proof}

\par\smallskip

\begin{corollary}
Let $\{x_k\}$ be the sequence of points $x_k\in X_k$ selected at each iteration by algorithm \DFMOmin. Then, a subset of iteration indices $K$ exists such that 
$\lim_{k\to\infty,k\in K}\mu(x_k) = 0$, i.e. $\liminf_{k\to\infty}\mu(x_k) = 0$. 
\end{corollary}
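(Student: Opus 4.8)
The plan is to combine a telescoping estimate on the cost function $\Phi_k$ with a short structural fact about the evolution of the minimum maximal stepsize $\xi_k$, and then to apply the criticality bound of Proposition~\ref{prop:boundmu} only along the (infinite) subsequence of unsuccessful iterations.

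First, I would establish that $\xi_k\to 0$. Since $\Phi_k = -HI(F(X_k)) + \eta\xi_k^{2q}\geq -\overline{HI}$ for all $k$ (the hypervolume being bounded from above under Assumption~\ref{ass:boundedness}), summing the one-step decrease $\Phi_k-\Phi_{k-1}\leq -\tilde c\,\xi_k^{2q}$ of Proposition~\ref{phi_xi_relation} and letting $k\to\infty$ gives $\sum_{k\geq 1}\tilde c\,\xi_k^{2q}\leq \Phi_0+\overline{HI}<\infty$, hence $\xi_k\to 0$.

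Next, I would argue that the algorithm performs infinitely many unsuccessful iterations. The key observation is that $\{\xi_k\}$ decreases only at unsuccessful iterations: at an unsuccessful iteration $\xi_{k+1}=\theta\xi_k$, while at a successful one $\xi_{k+1}\geq\xi_k$. Indeed, in the successful case every element of $L_{k+1}$ either already belongs to $L_k$ (so its maximal stepsize is $\geq\xi_k$ by the very definition of $\xi_k$) or is a point $y\in Y_k$ carrying the stepsize vector $\tilde\alpha_{k+1}=\alpha_k$, whose maximal component is at least $\max_i\max\{\tilde\alpha_k^i,c\Delta_k\}=\max\{\xi_k,c\xi_k\}=\xi_k$, because $c\in(0,1)$ and $\Delta_k=\xi_k$ for the selected point; removing dominated pairs can only delete elements and hence cannot lower this bound. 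Consequently, if there were only finitely many unsuccessful iterations, $\{\xi_k\}$ would be eventually nondecreasing and thus bounded away from zero, contradicting $\xi_k\to 0$. Let $K$ denote the infinite set of indices of unsuccessful iterations.

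Finally, for $k\in K$ no expansion step is ever accepted inside the {\tt Approximate Optimization} procedure started from $x_k$, so $\tilde\alpha_{k+1}^i=\theta\max\{\tilde\alpha_k^i,c\Delta_k\}$ for every $i$ and therefore $\Delta_{k+1}=\max_i\tilde\alpha_{k+1}^i=\theta\xi_k$. Applying the unsuccessful branch of Proposition~\ref{prop:boundmu} and then Assumption~\ref{ass:assumption_mu} yields
\[
\mu(x_k)\leq({\cal C}+1)\,\mu_{D_k}(x_k)\leq ({\cal C}+1)\,c_2\,\Delta_{k+1}=({\cal C}+1)\,c_2\,\theta\,\xi_k\qquad\forall\,k\in K,
\]
so that $\lim_{k\to\infty,\,k\in K}\mu(x_k)=0$, equivalently $\liminf_{k\to\infty}\mu(x_k)=0$, which is the claim. (Alternatively, one could deduce the statement from Proposition~\ref{boundK_min} by reapplying its bound indefinitely, but the argument above is self-contained and avoids tracking how the criticality threshold is inherited along the list.) The only genuinely delicate point is the monotonicity claim for $\xi_k$ at successful iterations: it requires checking carefully that selecting, exploring and re-inserting the minimum-stepsize point, and then pruning dominated pairs, cannot make $\xi_{k+1}<\xi_k$, which rests on the reference step supplied to the procedure being the fraction $c\Delta_k$ of the current maximal stepsize; the rest is the same telescoping-and-criticality mechanics already used for \DFMOnew\ and \DFMOlight.
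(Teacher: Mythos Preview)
Your argument is correct, but it takes a genuinely different route from the paper. The paper's proof is a single line: it invokes Proposition~\ref{boundK_min} (the finite bound on the first index $j_\eps$ with $\mu(x_{j_\eps})\leq\eps$) and says the liminf claim follows from the definition. Your proof instead bypasses Proposition~\ref{boundK_min} altogether: you use the telescoping estimate from Proposition~\ref{phi_xi_relation} to get $\xi_k\to 0$, then a structural observation about \DFMOmin\ (namely that $\xi_{k+1}\geq\xi_k$ at successful iterations and $\xi_{k+1}=\theta\xi_k$ at unsuccessful ones) to extract the infinite set $K$ of unsuccessful iterations, and finally the unsuccessful branch of Proposition~\ref{prop:boundmu} together with Assumption~\ref{ass:assumption_mu} to push $\mu(x_k)\to 0$ along $K$.

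What this buys you is an explicit, constructive subset $K$ and a fully self-contained argument; in particular you avoid the mild gap in the one-line proof, where repeatedly invoking Proposition~\ref{boundK_min} for smaller and smaller $\eps$ requires reinstating the hypothesis ``$\mu(x_i)>\eps$ for all $x_i\in X_0$'' at later iterates, which is not entirely obvious for an arbitrary list. The price is that you had to verify the monotonicity of $\xi_k$ at successful iterations, which you did correctly (each surviving pair in $L_{k+1}$ either comes from $L_k$, hence has maximal stepsize at least $\xi_k$, or is a new pair carrying $\alpha_k$ with $\max_i\alpha_k^i\geq\Delta_k=\xi_k$). The paper's approach is terser and leans on the complexity bound already proved; yours is more elementary and makes the mechanism transparent.
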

\begin{proof}
    The proof immediately follows from the definition of liminf and Proposition \ref{boundK_min}.
\end{proof}
\par\medskip

\begin{proposition}
For any $\eps >0$, assume that $\mu(x_i)>\eps$ for all $x_i\in X_0$. Let $j_{\epsilon}$ be the first iteration of algorithm \DFMOmin\ such that  $\mu(x_{j_{\epsilon}})\leq\epsilon$. Then, denoting by $Nf_{j_{\epsilon}}$ the number of function evaluations required by Algorithm \DFMOmin\ up to iteration $j_{\epsilon}$, we have that
$Nf_{j_{\epsilon}} \leq {\cal O}(n\epsilon^{-2q})$. In particular,
\[
Nf_{j_{\epsilon}} \leq {\cal O}(n)  \left\lfloor\frac{(\overline{HI} + \Phi_0)(\hat c({\cal C}+1))^{2q}}{\tilde c}\epsilon^{-2q}\right\rfloor + \left\lfloor\frac{(\overline{HI} - HI_0)\hat c^{2q}({\cal C}+1)^{2q}}{\gamma^qc^{2q}}\epsilon^{-2q}\right\rfloor,
\]
where $\hat c = \max\{c_1+L^{max}\sqrt{n},c_2\}$ and $\tilde c$, and $c_1,c_2$ are given by~\eqref{ctilde} and in proposition \eqref{prop:boundmu}, respectively.
\end{proposition}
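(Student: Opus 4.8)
The argument should closely parallel the proof of Proposition~\ref{prop:NF_DFMOnew} (the evaluation complexity of \DFMOnew), but exploiting the sharper iteration bound of Proposition~\ref{boundK_min} which holds for \DFMOmin\ \emph{without} the $|\mathbb{L}_0^{k}|$ factor. First I would split the function evaluations up to iteration $j_\epsilon$ into those produced by points that \emph{fail} to increase the hypervolume and those that \emph{succeed}. By the instructions of the \texttt{Approximate\_optimization} procedure, at each iteration at most $r = {\cal O}(n)$ evaluations are of the failing type (since \DFMOlight, and hence \DFMOmin, explores a single point per iteration). Therefore the failing evaluations up to $j_\epsilon$ are bounded by $r\,j_\epsilon$, and plugging in the bound $j_\epsilon \le \lfloor (\Phi_0+\overline{HI})(\hat c({\cal C}+1))^{2q}/\tilde c\rfloor\,\epsilon^{-2q}$ from Proposition~\ref{boundK_min} yields the first term on the right-hand side, ${\cal O}(n)\lfloor(\overline{HI}+\Phi_0)(\hat c({\cal C}+1))^{2q}/\tilde c\rfloor\epsilon^{-2q}$.

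The second term bounds $Nf_{j_\epsilon}^{\mathcal S}$, the number of successful evaluations. Here the key is that every such evaluation corresponds to a genuine insertion of a new point $y_k^i + \alpha_k^i d_k^i$ into the running set $Y^{i+1}$, which, by Proposition~\ref{HIincrease_gen}(ii) (equivalently the relation used in the well-definedness lemma and in~\eqref{eq:bound_diff_f_light}), increases the hypervolume by at least $(\gamma (\alpha_k^i)^2)^q \ge (\gamma c^2 \Delta_k^2)^q$. Since the hypervolume is monotonically increasing across the whole run of the algorithm and is bounded above by $\overline{HI}$, summing this per-insertion increase over all successful evaluations gives $\overline{HI} - HI_0 \ge Nf_{j_\epsilon}^{\mathcal S}\,(\gamma c^2 \underline{\Delta}^2)^q$, where $\underline{\Delta}$ is a uniform lower bound on the relevant stepsize before iteration $j_\epsilon$.

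To obtain that uniform lower bound I would argue exactly as in Proposition~\ref{boundK_min}: for any $k<j_\epsilon$ we have $\mu(x_k) > \epsilon$, hence $\mu_{D_k}(x_k) > \epsilon/({\cal C}+1)$ by Assumption~\ref{ass:assumption_mu}, and Proposition~\ref{prop:boundmu} then gives $\epsilon/({\cal C}+1) < \hat c\,\xi_k$ (relating the current criticality to the stepsize produced at some earlier unsuccessful iteration, using that \DFMOmin\ selects the point with minimal maximum stepsize so that $\xi$ never drops below the threshold dictated by the criticality of the unsuccessful point). This yields $\Delta_k \ge \xi_k \ge \epsilon/(\hat c({\cal C}+1))$ for the stepsizes in question, so $(\gamma c^2\Delta_k^2)^q \ge (\gamma c^2 \epsilon^2/(\hat c^2({\cal C}+1)^2))^q$. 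Substituting into the hypervolume telescoping inequality gives $Nf_{j_\epsilon}^{\mathcal S} \le \lfloor(\overline{HI}-HI_0)\hat c^{2q}({\cal C}+1)^{2q}/(\gamma^q c^{2q})\epsilon^{-2q}\rfloor$, which is the second term. Adding the two contributions and noting that each is ${\cal O}(n\epsilon^{-2q})$ (the $\hat c^{2q} = {\cal O}(n^q)$ factor being absorbed, and the leading $r = {\cal O}(n)$ in the failing term) completes the proof.

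\emph{Main obstacle.} The only delicate point is the bookkeeping linking the criticality measure $\mu_{D_k}(x_k)$ at a general iteration $k$ to a stepsize $\xi_k$ (or $\Delta_{m(k)+1}$) that is provably bounded below by $\epsilon/(\hat c({\cal C}+1))$: one must invoke the \DFMOmin\ selection rule to ensure that between two successive \emph{unsuccessful} iterations along the relevant trace the minimal maximum stepsize cannot have decreased, so that the lower bound established at the last unsuccessful iteration propagates forward to the successful ones where the hypervolume increments are being summed. Once this monotonicity of $\{\xi_k\}$ on the successful stretches (the analogue of $\Delta_{j_k}\ge\Delta_{j_{m(k)+1}}$ used in Proposition~\ref{complexity_DFMOlight_nf}) is in place, the rest is a routine repetition of the telescoping arguments already carried out for \DFMOnew\ and \DFMOlight.
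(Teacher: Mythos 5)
Your proposal follows essentially the same route as the paper's own proof: the same split of evaluations into those failing and succeeding to increase the hypervolume, the bound $r\,j_\epsilon$ on the former combined with the \DFMOmin\ iteration bound, and the telescoping hypervolume argument for the latter, with the stepsize lower bound $\epsilon/(\hat c({\cal C}+1))$ obtained from Assumption~\ref{ass:assumption_mu} and Proposition~\ref{prop:boundmu} via the $m(k)$/monotonicity device. No substantive differences to report.
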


\begin{proof}
First, let us partition the set of iteration indices $\{0,\dots,j_{\epsilon}-1\}$ into $\mathcal S_{j_{\epsilon}}$ and $\mathcal U_{j_{\epsilon}}$ such that
\[
k \in \mathcal S_{j_{\epsilon}} \, \Leftrightarrow \,  L_k \not= L_{k-1}, \quad k\in \mathcal U_{j_{\epsilon}} \, \Leftrightarrow \,  L_k = L_{k-1}, \quad \mathcal S_{j_{\epsilon}} \cup \mathcal U_{j_{\epsilon}} = \{0,\dots,j_{\epsilon}-1\},
\]
that is, $\mathcal S_{j_{\epsilon}}$ and $\mathcal U_{j_{\epsilon}}$ contain the successful and unsuccessful iterations up to $j_{\epsilon}-1$, respectively.



When the algorithm explores a new point, the latter can either succeed to increase the hypervolume or fail to do so. Let us then define $Nf_{j_{\epsilon}}^{\mathcal S}$ as the total number of function evaluations related to points which succeed to increase the hypervolume up to iteration $j_{\epsilon}$. Note that, at each iteration, the maximum number of function evaluations related to points which fail to increase the hypervolume is $r$.
From the instructions of the algorithm, we can write
\begin{equation}\label{NF_min}
Nf_{j_{\epsilon}} \leq r\ j_{\epsilon} + Nf_{j_{\epsilon}}^{\mathcal S} \le {\cal O}(n)  \left\lfloor\frac{(\overline{HI} + \Phi_0)(\hat c({\cal C}+1))^{2q}}{\tilde c}\epsilon^{-2q}\right\rfloor + Nf_{j_{\epsilon}}^{\mathcal S},
\end{equation}
where the last inequality follows from Proposition~\ref{prop:bound_iter_light}.
Now, let us consider any iteration $k < j_{\epsilon}$ and any index $i \in \{1,\ldots,n\}$
such that the line search succeeds to insert a new point in $\hat L$. This implies the following relation involving the hypervolume.

\begin{equation}\label{eq:bound_diff_f_min}
HI(\hat L\cup\{y+\alpha d_k^i\}) - HI(\hat L) \geq (\gamma \beta^2)^q \geq \left(\gamma c^2\Delta_k^2\right)^q 
\end{equation}

From Proposition~\ref{prop:boundmu}, we have that $\Delta_{m(k)+1} \ge \mu_{D_k} (x_k)/ \hat c$ with $\hat c = \max\{c_1+L^{max}\sqrt{n},c_2\}\leq {\cal O}(n^{1/2})$.
Since $({\cal C}+1)\mu_{D_k} (x_k) \geq \mu(x_k)> \epsilon$ for all $k \in \{0,\dots,j_{\epsilon}-1\}$, we have
\[
\frac{\epsilon}{\hat c({\cal C}+1)} \leq \Delta_k
\]
so that, from \eqref{eq:bound_diff_f_min}, we can write
\[
HI(\hat L\cup\{y+\alpha d_k^i\}) - HI(\hat L) \geq  
\left(\gamma \frac{c^2}{\hat c^2({\cal C}+1)^2}\epsilon^2\right)^q
\]
Since, in the expansion procedure, the new $\hat L$ is set equal to $\hat L\cup\{y+\alpha d_k^i\}$, we obtain,
summing up the above relation over all function evaluations producing an increase in the hypervolume, 
\[
\overline{HI} - HI_0 \geq Nf_{j_{\epsilon}}^{\mathcal S}\left(\gamma \frac{c^2}{\hat c^2({\cal C}+1)^2}\epsilon^2\right)^q,
\]
that is,
\[
Nf_{j_{\epsilon}}^{\mathcal S} \leq \left\lfloor\frac{(\overline{HI} - HI_0)\hat c^{2q}({\cal C}+1)^{2q}}{\gamma^qc^{2q}}\epsilon^{-2q}\right\rfloor.
\]
The desired results hence follows from~\eqref{NF_min}.
\end{proof}

\subsection{\DFMOmax}
In this section we consider a special version of \DFMOlight, namely \DFMOmax. In particular, 
\DFMOmax, at every iteration, selects the pair $(x_k,\tilde\alpha_k)\in L_k$ such that 
\[
\bar\Delta_k = \Delta_k = \max_{i=1,\dots,r}\{\tilde\alpha_k^i\} 
\]
where 
\[
\bar\Delta_k = \max_{(x_j,\tilde\alpha_j)\in L_k} \max_{i=1,\dots,r}\tilde\alpha_j^i.
\]
Hence, \DFMOmax\ selects (one of) the pair(s) such that its maximum stepsize $\Delta_k$, is the maximum of the maximum stepsizes among all the pairs in the current list $L_k$, i.e. $\bar\Delta_k$.

The advantage of \DFMOmax\ versus \DFMOlight\ (considered in section \ref{sec:DFMOlight}) consists in the stationarity results they are able to achieve. For \DFMOlight\ we are able to prove that within a maximum number of iterations the algorithm produces a set $X_k$ such that at least a point in it has a stationarity measure below a given tolerance. For \DFMOmax, at the cost of computing $\bar \Delta_k$ at each iteration, we are able to prove that within a maximum number of iterations the algorithm produces a set $X_k$ such that all the points in it have a stationarity below a prefixed tolerance. Hence, a somewhat stronger result. 

To carry out the complexity analysis of this instance of the algorithm \DFMOlight, we need a slightly modified version of propositions \ref{phidecrease} and \ref{phidecrease_linked}. This is because, in this version of the algorithm we shall bound the number of iterations such that $\bar\Delta_k$ falls below a pre-specified tolerance. Then, we can connect the stepsizes at iteration $k$, $\Delta_k$,  with stationarity at the previous iteration, $\mu(x_i)$ with $x_i\in X_{k-1}$.

\begin{proposition}\label{phidecrease_special}
Let ${\cal L}_i^{0:k} = \{(x_{j_{\tilde k}},\tilde\alpha_{j_{\tilde k}})\}\in\mathbb{L}_0^k$ be the $i$-th linked sequence. Then, it holds that,
\[
\Phi_{j_{\tilde k}} - \Phi_{j_{\tilde k-1}} \leq -
{\check c} (\bar \Delta_{j_{\tilde k-1}}^{2})^q\quad\forall\ \tilde k\in {\cal K}_i
\]
with 
\begin{equation}\label{ctilde_special}
{\check c} = \min\left\{\eta({1-\theta^{2q}}), \gamma^q c^{2q}, \gamma^q-\eta\right\} > 0
\end{equation}
where ${\cal K}_i$ is defined in Definition \ref{def:linked} point \ref{def:K_linked}.
\end{proposition}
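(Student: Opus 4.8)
The plan is to reproduce the three-case analysis used in the proof of Proposition \ref{phidecrease_linked}, but to phrase every estimate in terms of $\bar\Delta_{j_{\tilde k-1}}$ instead of $\Delta_{j_{\tilde k}}$. The observation that makes this work is specific to \DFMOmax: because $\tilde k\in{\cal K}_i$, the pair $(x_{j_{\tilde k-1}},\tilde\alpha_{j_{\tilde k-1}})$ is exactly the pair that the algorithm selected and processed at iteration $\tilde k-1$, and by the \DFMOmax\ selection rule that pair carries the largest stepsize in the list $L_{\tilde k-1}$; hence $\Delta_{j_{\tilde k-1}} = \bar\Delta_{j_{\tilde k-1}}$. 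Consequently, wherever the argument of Proposition \ref{phidecrease_linked} produces a term in $\Delta_{j_{\tilde k-1}}$ we may substitute $\bar\Delta_{j_{\tilde k-1}}$ for it. As there, the three cases are $\Delta_{j_{\tilde k}}=\theta\Delta_{j_{\tilde k-1}}$ (failure at iteration $\tilde k-1$), $\Delta_{j_{\tilde k}}=\Delta_{j_{\tilde k-1}}$ (success with no net expansion), and $\Delta_{j_{\tilde k}}>\Delta_{j_{\tilde k-1}}$ (success with expansion).

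I would first treat the failure case. Here $X_{\tilde k}=X_{\tilde k-1}$, so the hypervolume terms in $\Phi$ cancel and $\Phi_{j_{\tilde k}}-\Phi_{j_{\tilde k-1}}=\eta(\Delta_{j_{\tilde k}}^{2q}-\Delta_{j_{\tilde k-1}}^{2q})=\eta(\theta^{2q}-1)\Delta_{j_{\tilde k-1}}^{2q}=-\eta(1-\theta^{2q})\bar\Delta_{j_{\tilde k-1}}^{2q}$. This is exactly where the present bound departs from Proposition \ref{phidecrease_linked}: keeping the larger parent stepsize $\Delta_{j_{\tilde k-1}}$ on the right-hand side means we never divide by $\theta^{2q}$, which is why $\check c$ carries the factor $\eta(1-\theta^{2q})$ rather than $\eta(1-\theta^{2q})/\theta^{2q}$.

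Next I would dispose of the two successful cases, both of which force $X_{\tilde k}\neq X_{\tilde k-1}$ and so permit Proposition \ref{HIincrease_light} to be invoked. If $\Delta_{j_{\tilde k}}=\Delta_{j_{\tilde k-1}}$, Proposition \ref{HIincrease_light} gives a hypervolume increase of at least $(\gamma c^2\Delta_{j_{\tilde k}}^2)^q$, and since the $\eta$-contribution is non-positive we obtain $\Phi_{j_{\tilde k}}-\Phi_{j_{\tilde k-1}}\leq-\gamma^q c^{2q}\Delta_{j_{\tilde k}}^{2q}=-\gamma^q c^{2q}\bar\Delta_{j_{\tilde k-1}}^{2q}$. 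If $\Delta_{j_{\tilde k}}>\Delta_{j_{\tilde k-1}}$, Proposition \ref{HIincrease_light} gives an increase of at least $(\gamma\Delta_{j_{\tilde k}}^2)^q$; discarding the non-positive term $-\eta\Delta_{j_{\tilde k-1}}^{2q}$ and then using $\Delta_{j_{\tilde k}}\geq\bar\Delta_{j_{\tilde k-1}}$ together with $\eta<\gamma^q$ yields $\Phi_{j_{\tilde k}}-\Phi_{j_{\tilde k-1}}\leq-(\gamma^q-\eta)\bar\Delta_{j_{\tilde k-1}}^{2q}$. Setting $\check c=\min\{\eta(1-\theta^{2q}),\gamma^q c^{2q},\gamma^q-\eta\}$, which is positive because $0<\theta<1$ and $0<\eta<\gamma^q$, covers all three cases and finishes the proof.

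There is no genuine obstacle here: this is a routine adaptation of Proposition \ref{phidecrease_linked}. The only step that requires care is the identity $\Delta_{j_{\tilde k-1}}=\bar\Delta_{j_{\tilde k-1}}$ — i.e. that for indices in ${\cal K}_i$ the parent pair really is the max-stepsize pair the \DFMOmax\ rule selected — together with correctly tracking the direction of the inequality $\Delta_{j_{\tilde k}}\geq\bar\Delta_{j_{\tilde k-1}}$ in the expansion subcase.
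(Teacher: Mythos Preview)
Your proposal is correct and follows essentially the same three-case analysis as the paper's proof. You are in fact slightly more explicit than the paper about the crucial point: the identity $\Delta_{j_{\tilde k-1}}=\bar\Delta_{j_{\tilde k-1}}$, valid because for $\tilde k\in{\cal K}_i$ the parent pair is precisely the one selected by the \DFMOmax\ rule, is what the paper uses tacitly when it writes the three cases directly in terms of $\bar\Delta_{j_{\tilde k-1}}$.
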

\begin{proof}
For every $\tilde k\in {\cal K}_i$, recall that $(x_{j_{\tilde k-1}},\tilde\alpha_{j_{\tilde k-1}})\in L_{\tilde k-1}$. Then, only three cases can occur, namely: $\Delta_{j_{\tilde k}} = \theta\bar \Delta_{j_{\tilde k-1}}$, $\Delta_{j_{\tilde k}} = \bar\Delta_{j_{\tilde k-1}}$, and $\Delta_{j_{\tilde k}} > \bar\Delta_{j_{\tilde k-1}}$.  
Hence, in the following we consider these three cases.
\begin{itemize}
    \item $\Delta_{j_{\tilde k}} = \theta\bar\Delta_{j_{\tilde k-1}}$, which means that $X_{\tilde k} = X_{\tilde k-1}$. 
    Then, we can write
    \[
    \Phi_{j_{\tilde k}} - \Phi_{j_{\tilde k-1}} \leq \eta(\Delta_{j_{\tilde k}})^{2q} - \eta(\bar \Delta_{j_{\tilde k-1}})^{2q} = -\eta ({1 - \theta^{2q}})(\bar\Delta_{j_{\tilde k-1}})^{2q}.
    \]
    \item $\Delta_{j_{\tilde k}} = \bar\Delta_{j_{\tilde k-1}}$, which means the iteration was successful, i.e. $X_{\tilde k} \neq X_{\tilde k-1}$. Then, by proposition \ref{HIincrease_light}, in this case we can write,
    \[
    \Phi_{j_{\tilde k}} - \Phi_{j_{\tilde k-1}} \leq -(\gamma c^2\bar\Delta_{j_{\tilde k-1}}^2)^q.
    \]
    \item $\Delta_{j_{\tilde k}} > \bar\Delta_{j_{\tilde k-1}}$, which again means that the iteration was successful, i.e. $X_{\tilde k} \neq X_{\tilde k-1}$. Then, again by proposition \ref{HIincrease_light}, we get
    \[
    \Phi_{j_{\tilde k}} - \Phi_{j_{\tilde k-1}} \leq -(\gamma \bar\Delta_{j_{\tilde k}}^2)^q + \eta\bar\Delta_{j_{\tilde k}}^{2q} - \eta\bar\Delta_{j_{\tilde k-1}}^{2q}\leq -(\gamma^q-\eta)\bar \Delta_{j_{\tilde k}}^{2q} < -(\gamma^q-\eta)\bar\Delta_{j_{\tilde k-1}}^{2q}.
    \]
\end{itemize}
Thence, the proof is concluded.
\end{proof}

In the next proposition we bound the number of iterations such that $\bar\Delta_k > \delta$ for a pre-specified threshold $\delta>0$.

\begin{proposition}\label{prop:bound_iter_special}
For any $\delta >0$, let $k_{\delta}$ be the first iteration of algorithm \DFMOmax\ such that   $\bar\Delta_k \leq \delta$.
Then,
\[
k_{\delta} \leq |\mathbb{L}_0^{k_{\delta}}|\times\left\lfloor\frac{(\overline{HI} + \bar\Phi_0)}{\tilde c}\delta^{-2q}\right\rfloor \leq {\cal O}(|\mathbb{L}_0^{k_{\delta}}|\delta^{-2q}),
\]
where $\mathbb{L}_0^{k_\delta}$ is the set of all linked sequences between iterations 0 and $k_\delta$.
\end{proposition}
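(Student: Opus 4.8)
The plan is to mirror the structure of the proof of Proposition~\ref{prop:bound_iter_light}, replacing the per-linked-sequence cost decrease of Proposition~\ref{phidecrease_linked} by the stronger decrease of Proposition~\ref{phidecrease_special}, and then summing over all linked sequences between iterations $0$ and $k_\delta$. First I would fix an arbitrary linked sequence ${\cal L}_i = \{(x_{j_{\tilde k}},\tilde\alpha_{j_{\tilde k}})\}\in\mathbb{L}_0^{k_\delta}$ with associated index set ${\cal K}_i$. Since $k_\delta$ is the \emph{first} iteration with $\bar\Delta_k\le\delta$, for every $\tilde k\in{\cal K}_i$ with $\tilde k\le k_\delta$ we have $\bar\Delta_{j_{\tilde k-1}} > \delta$. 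Applying Proposition~\ref{phidecrease_special} on this linked sequence gives, for each such $\tilde k$,
\[
\Phi_{j_{\tilde k}} - \Phi_{j_{\tilde k-1}} \leq -\check c\,(\bar\Delta_{j_{\tilde k-1}}^2)^q \leq -\check c\,\delta^{2q}.
\]
Summing this telescoping inequality over $\tilde k\in{\cal K}_i$ and using that $\{\Phi_{j_{\tilde k}}\}$ is nonincreasing along the linked sequence (on the indices not in ${\cal K}_i$ the cost is constant by case (b) of Definition~\ref{def:linked}), together with the lower bound $\Phi_{j_{\tilde k}}\ge -\overline{HI}$ and the definition of $\bar\Phi_0$ as the largest among the $\Phi_{j_0}$ over all linked sequences in $\mathbb{L}_0^{k_\delta}$, yields
\[
\overline{HI} + \bar\Phi_0 \geq |{\cal K}_i|\,\check c\,\delta^{2q},
\qquad\text{hence}\qquad
|{\cal K}_i| \leq \left\lfloor\frac{\overline{HI} + \bar\Phi_0}{\check c}\,\delta^{-2q}\right\rfloor.
\]

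Next I would observe that, exactly as in Proposition~\ref{prop:bound_iter_light}, every iteration index $k\in\{0,\dots,k_\delta\}$ belongs to ${\cal K}_i$ for at least one linked sequence ${\cal L}_i\in\mathbb{L}_0^{k_\delta}$ (since at a successful iteration the explored point $x_k$ sits at the tail of some linked sequence, and at an unsuccessful iteration $(x_k,\tilde\alpha_{k+1})$ extends the linked sequence ending at $(x_k,\tilde\alpha_k)$ via case (a)). Therefore
\[
k_\delta \leq \sum_{{\cal L}_i\in\mathbb{L}_0^{k_\delta}}|{\cal K}_i| \leq |\mathbb{L}_0^{k_\delta}|\times\left\lfloor\frac{\overline{HI} + \bar\Phi_0}{\check c}\,\delta^{-2q}\right\rfloor,
\]
which is the claimed bound (the statement writes $\tilde c$ where, consistently with Proposition~\ref{phidecrease_special}, the constant $\check c$ of~\eqref{ctilde_special} should appear). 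The $\mathcal O(|\mathbb{L}_0^{k_\delta}|\delta^{-2q})$ bound then follows since $\overline{HI}$, $\bar\Phi_0$ and $\check c$ are constants independent of $\delta$.

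The main obstacle — and the one point that needs care rather than routine manipulation — is the bookkeeping that lets us pass from the per-linked-sequence bound on $|{\cal K}_i|$ to the bound on $k_\delta$: one must argue that the iterations counted are covered by the union of the ${\cal K}_i$'s, and that when summing over linked sequences we are allowed to bound each $\Phi_{j_0}$ uniformly by $\bar\Phi_0$ while still using $\Phi_{j_{\tilde k}}\ge-\overline{HI}$ at the tail. A secondary subtlety is that Proposition~\ref{phidecrease_special} bounds the decrease by $(\bar\Delta_{j_{\tilde k-1}}^2)^q$, the \emph{previous} maximum stepsize over the \emph{whole} list — which is precisely why the \DFMOmax\ selection rule (always picking the pair realizing $\bar\Delta_k$) is essential, and why the lower bound $\bar\Delta_{j_{\tilde k-1}}>\delta$ for $\tilde k-1 < k_\delta$ is exactly what is available before iteration $k_\delta$. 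Everything else is a direct transcription of the arguments already used for \DFMOlight.
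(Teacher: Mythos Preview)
Your proposal is correct and follows essentially the same approach as the paper's own proof: fix a linked sequence, use Proposition~\ref{phidecrease_special} to get a per-step decrease of at least $\check c\,\delta^{2q}$ (since $\bar\Delta_{j_{\tilde k-1}}>\delta$ for $\tilde k-1<k_\delta$), telescope using $\Phi\ge-\overline{HI}$ and $\Phi_{j_0}\le\bar\Phi_0$ to bound $|{\cal K}_i|$, and then sum over all linked sequences in $\mathbb{L}_0^{k_\delta}$. You also correctly flag that the constant in the displayed bound should be $\check c$ from~\eqref{ctilde_special} rather than $\tilde c$, which matches what the paper actually uses in its proof.
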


\begin{proof}
Since $k_\delta$ is the first iteration such that $\bar\Delta_{k_\delta} \leq \delta$, then $\bar\Delta_k > \delta$ for all $k=0,\dots,k_{\delta}-1$.

On the other hand, from proposition \ref{phidecrease_special}, we have
\[
\Phi_{j_k} - \Phi_{j_{k-1}} \leq -\check c(\bar\Delta_{j_{k-1}}^2)^q,\qquad \forall\ k\in{\cal K}_i.
\]
Then, summing up the above relations and considering the nonincreasing feature of $\{\Phi_{l_k}\}$, we obtain
\[
-\overline{HI} - \bar\Phi_{0} \leq \Phi_{j_{k_{\delta}}} - \Phi_{j_0} \leq -\sum_{k\in {\cal K}_i,k\neq 0}\check c(\bar\Delta_{j_{k-1}}^2)^q\leq -\sum_{k\in {\cal K}_i,k\neq 0}\check c\delta^{2q} =-|{\cal K}_i|\check c \delta^{2q},
\]
where $\bar\Phi_0$ is the maximum, among all the linked sequences in $\mathbb{L}_0^{k_{\epsilon}}$, of the values $\Phi_{j_0}$.
Then, we can write
\[
\overline{HI} + \bar\Phi_0 \geq |{\cal K}_i|\check c\delta^{2q},
\]
\[
|{\cal K}_j| \leq \left\lfloor\frac{(\overline{HI} + \bar\Phi_0)}{\check c}\delta^{-2q}\right\rfloor.
\]
Now, it results
\[
k_{\delta} \leq \sum_{{\cal L}_i\in\mathbb{L}_0^{k_{\delta}}}|{\cal K}_i|\leq |\mathbb{L}_0^{k_{\delta}}|\times\left\lfloor\frac{(\overline{HI} + \bar\Phi_0)}{\check c}\delta^{-2q}\right\rfloor,
\]
which concludes the proof.
\end{proof}

\par\medskip

Finally, we can give the complexity result for \DFMOmax.

\begin{proposition}\label{complexity_DFMOstrong}
{Let $\Delta_i = \max_{j=1,\dots,r}\{\tilde\alpha_i^j\} > \eps$ for all pairs $(x_i,\tilde\alpha_i)\in \tilde L_0$.} For any $\eps>0$, let $k_\eps$ be the first iteration such that $\Gamma(X_{k_\eps-1})\leq\eps$.
Then, 
\[
k_\eps \leq |\mathbb{L}_0^{k_{\eps}}|\times\left\lfloor\frac{(\overline{HI} + \bar\Phi_0)(\hat c({\cal C}+1))^{2q}}{\check c}\eps^{-2q}\right\rfloor.
\]
\end{proposition}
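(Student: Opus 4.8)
The plan is to combine the stepsize--stationarity bound of Proposition~\ref{prop:boundmu} with the iteration bound of Proposition~\ref{prop:bound_iter_special}, exactly as one passes from a bound on $\{k:\bar\Delta_k>\delta\}$ to a bound on $\{k:\Gamma(X_k)>\eps\}$. First I would fix $\eps>0$ and let $k_\eps$ be the first iteration such that $\Gamma(X_{k_\eps-1})\le\eps$; thus for all $k\le k_\eps-1$ there exists $x_i\in X_{k}$ with $\mu(x_i)>\eps$ (recall the hypothesis $\Delta_i>\eps$ for all pairs in $\tilde L_0$ makes this true also at $k=0$). By Assumption~\ref{ass:assumption_mu} this gives $\mu_{D_k}(x_i)>\eps/({\cal C}+1)$ for such an $x_i$.

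Next I would translate this into a lower bound on $\bar\Delta_{k+1}$. Since \DFMOmax\ selects at every iteration a pair whose maximum stepsize equals $\bar\Delta_k$, and since (by the mechanics of the \texttt{Approximate Optimization} procedure and Proposition~\ref{prop:boundmu}) the point $x_i$ with $\mu_{D_k}(x_i)>\eps/({\cal C}+1)$ was at some earlier iteration $h$ explored with stepsizes satisfying $\mu_{D_h}(x_h)\le \hat c\,\Delta$ where $\Delta$ is the corresponding maximum produced stepsize, one obtains that $\bar\Delta_k>\eps/(\hat c({\cal C}+1))$ for all $k=0,\dots,k_\eps-1$. In other words, setting $\delta=\eps/(\hat c({\cal C}+1))$, the iteration $k_\eps$ is no larger than the first iteration $k_\delta$ at which $\bar\Delta_k\le\delta$. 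Applying Proposition~\ref{prop:bound_iter_special} with this value of $\delta$ then yields
\[
k_\eps \le k_\delta \le |\mathbb{L}_0^{k_\delta}|\times\left\lfloor\frac{(\overline{HI}+\bar\Phi_0)}{\check c}\delta^{-2q}\right\rfloor
= |\mathbb{L}_0^{k_\eps}|\times\left\lfloor\frac{(\overline{HI}+\bar\Phi_0)(\hat c({\cal C}+1))^{2q}}{\check c}\eps^{-2q}\right\rfloor,
\]
which is the claimed bound.

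The main obstacle is the bookkeeping linking the selected point at iteration $k$ to the stationarity measure of a point in $X_k$ via a linked sequence: one has to argue carefully that whenever $\Gamma(X_k)>\eps$ there is an unsuccessful exploration (somewhere along the relevant linked sequence, at some iteration $h\le k$) whose stepsize controls $\mu_{D_h}$, and that the selection rule of \DFMOmax\ together with the monotonicity built into the algorithm's stepsize updates propagates a matching lower bound to $\bar\Delta_k$. This is precisely the step where the ``max'' selection rule is essential (it is what upgrades the ``at least one point'' guarantee of \DFMOlight\ to the ``all points'' guarantee here), and it mirrors the $m(k)$ construction used in the proof of Proposition~\ref{prop:bound_iter_special}. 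Once that lower bound $\bar\Delta_k>\delta$ is in hand for all $k<k_\eps$, the remainder is a direct substitution into Proposition~\ref{prop:bound_iter_special}.
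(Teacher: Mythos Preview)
Your proposal is correct and takes essentially the same approach as the paper: set $\delta=\eps/(\hat c({\cal C}+1))$, use Proposition~\ref{prop:boundmu} together with the linked-sequence bookkeeping (and the initial-stepsize hypothesis) to tie $\Gamma(X_k)$ to $\bar\Delta_k$, and then invoke Proposition~\ref{prop:bound_iter_special}. The only cosmetic difference is that the paper argues the direct implication (once $\bar\Delta_k\le\delta$, every $x_i\in X_{k-1}$ satisfies $\mu(x_i)\le\eps$), whereas you phrase it contrapositively; also note a small index slip---since $\Gamma(X_{k_\eps-1})\le\eps$ by definition, the range in your first paragraph should be $k\le k_\eps-2$ rather than $k\le k_\eps-1$.
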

\begin{proof}
Let us define 
\[
\delta = \dfrac{\eps}{({\cal C}+1)\hat c}.
\]
Hence, by proposition \ref{prop:bound_iter_special}, the algorithm takes $k_\eps$ iterations with  
\[
k_\eps \leq |\mathbb{L}_0^{k_{\eps}}|\times\left\lfloor\frac{(\overline{HI} + \bar\Phi_0)(\hat c({\cal C}+1))^{2q}}{\check c}\eps^{-2q}\right\rfloor
\]
to obtain $\bar\Delta_{k_\eps} \leq \eps/(c_2({\cal C}+1))$. To prove that $\Gamma(X_{k_\eps -1})\leq\eps$, we show that $\mu(x_i)\leq\eps$ for all $x_i\in X_{k_\eps-1}$. In fact, {since $\Delta_i = \max_{j=1,\dots,r}\{\tilde\alpha_i^j\} > \eps$ for all pairs $(x_i,\tilde\alpha_i)\in \tilde L_0$,} each point $x_i\in X_{k_\eps-1}$ belongs to a linked sequence ${\cal L}_i=\{(x_{j_k},\tilde\alpha_{j_k})\}$ and  $x_i = x_{j_{\tilde k-1}}$ with $\tilde k-1$ the largest index in ${\cal K}_i$. 
By proposition \ref{prop:boundmu} we have 
\[
\mu(x_{j_{\tilde k-1}})\leq ({\cal C}+1)\hat c\Delta_{j_{\tilde k}} = ({\cal C}+1) \hat c\bar\Delta_{j_{k_\eps}} \leq 
\eps. 
\]
So the proof is concluded.
\end{proof}

\par\medskip

\begin{corollary}
For Algorithm \DFMOmax, a subset $K$ of iteration indices exists such that:
\begin{itemize}
    \item[(i)] $\lim_{k\to\infty,k\in K}\Gamma(X_k) = 0$ {(where $\Gamma(X_k)$ is  given by (\ref{Gamma})}, i.e. $\liminf_{k\to\infty}\Gamma(X_k) = 0$;

    \item[(ii)] if $\{x_k\}$ is a sequence such that $x_k\in X_k$ for all $k$, $\liminf_{k\to\infty}\mu(x_k) = 0.$ \end{itemize}

\end{corollary}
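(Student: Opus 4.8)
The idea is to read off both asymptotic statements from the finite worst-case iteration bound of Proposition~\ref{complexity_DFMOstrong} (equivalently, from the step-size bound of Proposition~\ref{prop:bound_iter_special}) by letting the tolerance shrink to zero; this is the natural analogue of the corollaries that follow Propositions~\ref{prop:Kstrong} and~\ref{prop:bound_iter_light}.

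First I would record that the stepsizes of \DFMOmax\ remain strictly positive, so that $\bar\Delta_k>0$ for every $k$ (a routine induction on the update rules: the initial stepsizes are positive, a failure rescales the selected pair's stepsizes by $\theta\in(0,1)$, and a success adds pairs with positive stepsizes while leaving the others untouched). Next, fix a strictly decreasing sequence $\delta_m\downarrow 0$. By Proposition~\ref{prop:bound_iter_special} the first index $k_{\delta_m}$ with $\bar\Delta_{k_{\delta_m}}\le\delta_m$ is finite, and $m\mapsto k_{\delta_m}$ is non-decreasing (a smaller threshold is a more stringent requirement). If this sequence were bounded it would be eventually constant, say equal to $\bar k$, forcing $\bar\Delta_{\bar k}\le\delta_m$ for all large $m$, hence $\bar\Delta_{\bar k}=0$ --- impossible by the positivity just noted. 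Therefore $k_{\delta_m}\to\infty$, and after extracting a strictly increasing subsequence we obtain an infinite index set $\widehat K$ with $\bar\Delta_k\to 0$ along $\widehat K$.

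To conclude (i), note that along $\widehat K$ one eventually has $\bar\Delta_k$ below every initial stepsize, so the hypothesis used in the proof of Proposition~\ref{complexity_DFMOstrong} is satisfied and, via Proposition~\ref{prop:boundmu}, every $x\in X_{k-1}$ obeys $\mu(x)\le(\mathcal C+1)\hat c\,\bar\Delta_k$; taking the maximum over $x\in X_{k-1}$ gives $\Gamma(X_{k-1})\le(\mathcal C+1)\hat c\,\bar\Delta_k\to 0$. Hence $K:=\{k-1:\ k\in\widehat K\ \text{large enough}\}$ is an infinite index set with $\lim_{k\to\infty,\,k\in K}\Gamma(X_k)=0$, which also yields $\liminf_{k\to\infty}\Gamma(X_k)=0$. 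For (ii), the definition of $\Gamma$ in~\eqref{Gamma} gives $0\le\mu(x_k)\le\Gamma(X_k)$ for every $k$ and every admissible choice $x_k\in X_k$; restricting to the same $K$ and squeezing gives $\mu(x_k)\to 0$ as $k\to\infty$, $k\in K$, i.e.\ $\liminf_{k\to\infty}\mu(x_k)=0$.

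The only non-mechanical point is showing that $K$ is infinite, i.e.\ that the first-hitting times $k_{\delta_m}$ (equivalently, the tolerance-dependent iteration counts $k_\eps$ of Proposition~\ref{complexity_DFMOstrong}) diverge rather than stabilize; this is exactly the argument above and relies on the strict positivity of the stepsizes, which excludes the degenerate scenario in which \DFMOmax\ reaches, at a finite iteration, a list all of whose points are already Pareto critical. Everything else is a direct application of the already-established complexity results and of elementary properties of the limit inferior.
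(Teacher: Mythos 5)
Your proposal is correct and takes essentially the same route as the paper: point (i) is read off from the finite $\eps$-dependent iteration bounds for \DFMOmax\ (Proposition \ref{prop:bound_iter_special} together with Propositions \ref{prop:boundmu} and \ref{complexity_DFMOstrong}) by letting the tolerance shrink and extracting a subsequence, and point (ii) follows from the squeeze $0\le\mu(x_k)\le\Gamma(X_k)$, exactly as in the paper. The only difference is that you make explicit what the paper compresses into ``the definition of limit of a subsequence,'' namely that the first-hitting times diverge because the stepsizes remain strictly positive; this is a useful elaboration but not a genuinely different argument.
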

\begin{proof}
Point (i) immediately follows from Proposition \ref{complexity_DFMOstrong} and the definition of limit of a subsequence. 

Now, let us consider point (ii). Let $\{x_k\}$ be e sequence such that $x_k\in X_k$. By the definition of $\Gamma$, we can write that
\[
0\leq \mu(x_k)\leq \Gamma(X_k),
\]
so that the result follows by point (i) above.
\end{proof}

\par\smallskip

\begin{proposition}
For any $\eps >0$, let $k_{\epsilon}$ be the first iteration of algorithm \DFMOmax\ such that  $\Gamma(X_{k_{\epsilon}-1})\leq\epsilon$.Then, denoting by $Nf_{k_{\epsilon}}$ the number of function evaluations required by Algorithm \DFMOmax\ up to iteration $k_{\epsilon}$, we have that
$Nf_{k_{\epsilon}} \leq {\cal O}(n^{q+1}|{\cal L}(\epsilon)|\epsilon^{-2q})$. In particular,
\[
Nf_{k_{\epsilon}} \leq {\cal O}(n) |\mathbb{L}_0^{j_{\epsilon}}| \left\lfloor\frac{(\overline{HI} + \bar\Phi_0)(\hat c({\cal C}+1))^{2q}}{\check c}\epsilon^{-2q}\right\rfloor + \left\lfloor\frac{(\overline{HI} - HI_0)\hat c^{2q}({\cal C}+1)^{2q}}{\gamma^qc^{2q}}\epsilon^{-2q}\right\rfloor,
\]
where $\tilde c$  and $c_2$ are given by~\eqref{ctilde} and in proposition \eqref{prop:boundmu}, respectively.
\end{proposition}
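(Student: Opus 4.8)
The plan is to follow the template of the proofs of Proposition~\ref{prop:NF_DFMOnew} and Proposition~\ref{complexity_DFMOlight_nf}, splitting the total count $Nf_{k_\epsilon}$ into the evaluations spent on tentative points that \emph{fail} to increase the hypervolume and those, collected in a quantity $Nf_{k_\epsilon}^{\mathcal S}$, that \emph{succeed} in doing so. First I would partition $\{0,\dots,k_\epsilon-1\}$ into the successful iterations $\mathcal S_{k_\epsilon}$ (those with $L_k\neq L_{k-1}$) and the unsuccessful ones $\mathcal U_{k_\epsilon}$, and note that at each iteration of \DFMOmax\ at most $r={\cal O}(n)$ function evaluations are associated with points that fail. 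Thus $Nf_{k_\epsilon}\leq r\,k_\epsilon + Nf_{k_\epsilon}^{\mathcal S}$, and the first summand is controlled by invoking the iteration complexity estimate of Proposition~\ref{complexity_DFMOstrong} (with its constant $\bar\Phi_0$), which produces the ${\cal O}(n)\,|\mathbb{L}_0^{j_\epsilon}|\,\lfloor\cdots\rfloor$ term with $\check c$ in the denominator.

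Next I would bound $Nf_{k_\epsilon}^{\mathcal S}$. For any $k<k_\epsilon$ and any direction index $i$ for which the \texttt{Approximate\_optimization} procedure inserts a new point $y_k^i+\alpha_k^i d_k^i$ into $Y^{i+1}$, Proposition~\ref{HIincrease_gen}(ii) gives
\[
HI(F(Y^{i+1}\cup\{y_k^i+\alpha_k^i d_k^i\})) - HI(F(Y^{i+1})) \geq \left(\gamma\,(\alpha_k^i)^2\right)^q \geq \left(\gamma c^2\Delta_k^2\right)^q .
\]
Then, exactly as in Proposition~\ref{complexity_DFMOlight_nf}, I would locate the linked sequence ${\cal L}_i$ with $k\in{\cal K}_i$, let $m(k)$ be the largest index of $\mathcal U_{k_\epsilon}\cap\{0,\dots,k-1\}$ (or $0$ if this set is empty), and use the monotonicity of the stepsizes along a linked sequence between consecutive unsuccessful iterations to replace $\Delta_k$ by $\Delta_{j_{m(k)+1}}$. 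Since $k<k_\epsilon$ forces $\mu(x_i)>\eps$ for all $x_i\in X_k$, Proposition~\ref{prop:boundmu} yields $\Delta_{j_{m(k)+1}}\geq \mu_{D_{j_{m(k)}}}(x_{j_{m(k)}})/\hat c \geq \eps/(\hat c({\cal C}+1))$. Summing the hypervolume increments over all evaluations that raise the hypervolume and bounding the telescoped sum by $\overline{HI}-HI_0$ gives
\[
Nf_{k_\epsilon}^{\mathcal S}\leq \left\lfloor\frac{(\overline{HI}-HI_0)\hat c^{2q}({\cal C}+1)^{2q}}{\gamma^q c^{2q}}\eps^{-2q}\right\rfloor ,
\]
which is the second summand in the claimed inequality.

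Adding the two bounds yields the stated estimate. For the ${\cal O}$-form I would substitute $\hat c={\cal O}(n^{1/2})$ (so $\hat c^{2q}={\cal O}(n^q)$) while $\check c,\gamma,c,{\cal C}$ are ${\cal O}(1)$, combine with the leading ${\cal O}(n)$ coming from the $r\,k_\epsilon$ term, and recall $|{\cal L}(\epsilon)|={\cal O}(|\mathbb{L}_0^{j_\epsilon}|)$, obtaining $Nf_{k_\epsilon}\leq {\cal O}(n^{q+1}|{\cal L}(\epsilon)|\epsilon^{-2q})$. The delicate point, as in the analysis of \DFMOlight, is the bookkeeping over linked sequences: one must ensure that each hypervolume-increasing evaluation is charged exactly once, that replacing $\Delta_k$ by the stepsize at the most recent unsuccessful iteration of its linked sequence is legitimate (this rests on the update $\tilde\alpha_{k+1}^i=\alpha_k^i\geq c\Delta_k$ on a successful iteration together with $\Delta_k\geq\Delta_{j_{m(k)+1}}$), and that the hypothesis $\Delta_i>\eps$ for all pairs in $\tilde L_0$ guarantees every point of $X_{k_\epsilon-1}$ indeed lies on a linked sequence whose relevant indices form some ${\cal K}_i$, so that Proposition~\ref{prop:boundmu} can be applied to each.
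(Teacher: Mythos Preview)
Your proposal is correct and follows essentially the same approach as the paper: the paper's own proof is the single sentence ``The proof follows exactly the same arguments as the proof of Theorem~\ref{complexity_DFMOlight_nf},'' and your write-up reproduces that template in detail, splitting $Nf_{k_\epsilon}$ into the failure evaluations (bounded via the iteration count of Proposition~\ref{complexity_DFMOstrong}) and the hypervolume-increasing evaluations (bounded via the telescoping hypervolume argument and Proposition~\ref{prop:boundmu}).
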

\begin{proof}
The proof follows exactly the same arguments as the proof of Theorem \ref{complexity_DFMOlight_nf}.     
\end{proof}






\section{Conclusions}\label{sec:conc}

In the following table we collect the comcplexity results of the proposed algorithms for better clarity.

\begin{table}[ht!]
\begin{center}
\renewcommand{\arraystretch}{1.2}
\begin{tabular}{l|c|c|c}\hline
Algorithm       &  $|K_{\epsilon}|$   & $j_\eps$ & $Nf_\eps$\\\hline
     \DFMOnew   &  $\left\lfloor
\frac{(\Phi_0+\overline{HI})(\hat c({\cal C}+1))^{2q}}{\tilde c}\epsilon^{-2q}
\right\rfloor$   & $\left\lfloor
\frac{(\Phi_0+\overline{HI})(\hat c({\cal C}+1))^{2q}}{\tilde c}\epsilon^{-2q}
\right\rfloor$ & ${\cal O}(n^{q+1}L(\eps)\eps^{-2q})$\\
     \DFMOlight &  -- & $|\mathbb{L}_0^{j_{\epsilon}-1}|\left\lfloor\frac{(\bar \Phi_0+\overline{HI})(c_2({\cal C}+1))^{2q}}{\tilde c}\epsilon^{-2q}\right\rfloor$ & ${\cal O}(n|{\cal L}(\epsilon)|\epsilon^{-2q})$\\
     \DFMOmin   & -- & $\left\lfloor\frac{(\Phi_0+\overline{HI})(\hat c({\cal C}+1))^{2q}}{\tilde c}\eps^{-2q}\right\rfloor $ & ${\cal O}(n^{q+1}\eps^{-2q})$\\
     \DFMOmax   & -- & $|\mathbb{L}_0^{j_{\eps}}|\left\lfloor\frac{(\bar\Phi_0+\overline{HI})(\hat c({\cal C}+1))^{2q}}{\check c}\eps^{-2q}\right\rfloor$ & ${\cal O}(n^{q+1}|{\cal L}(\eps)|\eps^{-2q})$\\\hline
\end{tabular}    
\renewcommand{\arraystretch}{1}
\end{center}
    \caption{Worst-case complexity bounds}
    \label{tab:bounds}
\end{table}
\par\smallskip

Since all of the algorithm presented in the paper deal with set of points (i.e. the set of nondominated points $X_k$ at each iteration $k$), it is necessary to specify what $K_\eps$ and $j_\eps$ refer to in the various algorithms. Thence, we recall the following definitions.
\begin{itemize}
    \item[-] For algorithm \DFMOnew, $K_\eps$ is defined in \eqref{subset}, i.e.
    \[
    K_\eps = \{k\in\{0,1,\dots\}:\ \Gamma(X_k) > \eps\}
    \]
    that is the set of iteration indices such that $\mu(x_k) > \eps$ for at least a point $x_k\in X_k$.

    Furthermore, $j_\eps$ is the index of the first iteration such that $\mu(x_{i}) \leq \eps$ for all $x_{i}\in X_{j_\eps}$.

    \item[-] For algorithm \DFMOlight, $j_\eps$ is the index of the first iteration such that $\mu(x_i)\leq\eps$ for at least a point $x_i\in X_{j_\eps}$.

    \item[-] For algorithm \DFMOmin, $j_\eps$ is the index of the first iteration such that $\mu(x_{j_\eps})\leq\eps$, where $x_{j_\eps}\in X_{j_\eps}$.

    \item[-] For algorithm \DFMOmax, $j_\eps$ is the index of the first iteration such that $\Gamma(X_{j_\eps -1}) \leq \eps$, i.e. $\mu(x_i)\leq \eps$ for all $x_i\in X_{j_\eps-1}$.
\end{itemize}

For the sake of better readability, in Table \ref{tab:constants} we report the constants defining bounds given in Table \ref{tab:bounds}.

\begin{table}[ht!]
\begin{center}
\renewcommand{\arraystretch}{1.2}
\begin{tabular}{l|c|c}\hline
constant & definition & where is defined \\\hline
  $\overline{HI}$   & max. hypervolume & -- \\
  $c_1$   & $\left(\displaystyle\frac{2(L^{max} + \gamma)}{\delta(1-\delta)}   + L^{max}\sqrt{n}\right)$ & Proposition \ref{prop:boundmu}\\
  $c_2$   & $\left(\displaystyle\frac{L^{max} + 2\gamma}{2\theta} \right)$ & Proposition \ref{prop:boundmu}\\
  $\tilde c$ & $\min\left\{\eta\frac{1-\theta^{2q}}{\theta^{2q}}, \gamma^q c^{2q}, \gamma^q-\eta\right\}$&  Proposition \ref{phidecrease}\\
  $\Phi_0$   & -- &  Proposition \ref{prop:Kstrong}\\
  $\hat c$ & $\max\{c_1+L^{max}\sqrt{n},c_2\}$ & Proposition \ref{prop:Kstrong} \\
  $L(\eps)$ & $\max_{k=0,\dots,j_\eps}|L_k|$ & Proposition \ref{prop:NF_DFMOnew}\\
  $\mathbb{L}_0^{k}$ & set of all linked seq. between iterations $0$ and $k$ & Proposition \ref{prop:bound_iter_light} \\
  $|{\cal L}(\eps)|$ & ${\cal O}(|\mathbb{L}_0^{j_\eps}|)$ & Proposition \ref{prop:bound_iter_light}\\
  $\check c$ & $\min\left\{\eta({1-\theta^{2q}}), \gamma^q c^{2q}, \gamma^q-\eta\right\}$ & Proposition \ref{phidecrease_special}\\
  \hline
\end{tabular}    
\renewcommand{\arraystretch}{1}
\end{center}
    \caption{Constant definitions}
    \label{tab:constants}
\end{table}


\section*{Disclosure of interests}
The authors have no competing interests to declare that are relevant to the content of this article.

\section*{Data availability statement}
{The authors declare that the data supporting the findings of this study are available within the paper.
}

\appendix

\section{Appendix: A technical result}

In this section we report an interesting characterization connecting hypervolume increase with the existence of a new non-dominated point.
\par\smallskip

\begin{proposition}\label{HIincrease_gen_new}
Let $Y\subseteq\Re^n$ be a set of points and let the reference point be $\rho = f_{\max}+s\mathbf{1}$ (where $\mathbf{1}$ is the vector of all ones of appropriate dimension and $s>0$ a  sufficiently large scalar parameter). Then, $y\in\Re^n$ is such that 
\begin{equation}\label{eq:nondomnew}
F(x) \not\leq F(y),\ F(y)\neq F(x) 
\qquad\forall\ x\in Y
\end{equation}
if and only if
\[
HI(F(Y\cup\{y\})) - HI(F(Y)) >0. 
\]
\end{proposition}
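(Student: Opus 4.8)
\textbf{Plan.} The statement is an ``if and only if'', so I would prove the two directions separately, and I expect the real work to be in the ``if'' direction (hypervolume strictly increases $\Rightarrow$ $y$ is non-dominated), since the converse is a direct volume-monotonicity argument. Throughout I would work with the explicit description of the hypervolume indicator as a Lebesgue volume, $HI(F(A)) = \vol\!\big(\bigcup_{a\in A}[F(a),\rho]\big)$, and exploit that $HI(F(Y\cup\{y\}))$ is the volume of the union of this set with the extra box $[F(y),\rho]$; hence the difference equals $\vol\!\big([F(y),\rho]\setminus \bigcup_{x\in Y}[F(x),\rho]\big) \ge 0$. This reframes everything as: the difference is strictly positive iff the box $[F(y),\rho]$ is not (up to a null set) contained in the union of the boxes $[F(x),\rho]$.

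\textbf{Contrapositive of the ``only if'' direction.} Suppose $y$ is dominated, i.e. there exists $x\in Y$ with $F(x)\le F(y)$ (either $F(x) < F(y)$ componentwise, or $F(x)\le F(y)$ with $F(x)\ne F(y)$, or $F(x)=F(y)$ — all three are covered by the negation of \eqref{eq:nondomnew}, using the convention for $\le$ fixed in Section \ref{sec:notation}). Then $[F(y),\rho]\subseteq [F(x),\rho]\subseteq \bigcup_{x'\in Y}[F(x'),\rho]$, so adding $y$ adds nothing to the union and the hypervolume difference is $0$. This settles one implication.

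\textbf{The ``if'' direction (the main step).} Assume \eqref{eq:nondomnew} holds: for every $x\in Y$, $F(x)\not\le F(y)$. I claim the open box $\prod_{i=1}^q \big(F_i(y),\, \rho_i\big)$ — which has strictly positive volume because $\rho = f_{\max}+s\mathbf 1 > F(y)$ componentwise for $s$ large enough, recalling $F(y)\le f_{\max}$ by Assumption \ref{ass:boundedness} (here one uses that $y$ not being dominated by points of $Y$ means $F(x_0)\not\le F(y)$ only if $x_0\in Y$; if not, one simply enlarges $s$ so that $\rho$ dominates $F(y)$ as well, which is permissible since $s$ is a free parameter) — is disjoint from $\bigcup_{x\in Y}[F(x),\rho]$. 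Indeed, pick any point $b$ in this open box and any $x\in Y$. Since $F(x)\not\le F(y)$ and $F(x)\ne F(y)$ is automatic here (if $F(x)=F(y)$ then trivially $F(x)\le F(y)$ fails the convention only in the equality clause, so in fact \eqref{eq:nondomnew} already excludes $F(x)=F(y)$), there is an index $i$ with $F_i(x) > F_i(y)$; combined with $b_i < F_i(y)$ this gives $b_i < F_i(x)$, hence $b\notin [F(x),\rho]$. As $x\in Y$ was arbitrary, $b\notin\bigcup_{x\in Y}[F(x),\rho]$. Therefore
\[
HI(F(Y\cup\{y\})) - HI(F(Y)) \;=\; \vol\!\Big([F(y),\rho]\setminus\!\!\bigcup_{x\in Y}[F(x),\rho]\Big) \;\ge\; \prod_{i=1}^q\big(\rho_i - F_i(y)\big) \;>\; 0,
\]
which is the desired strict inequality. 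I note in passing that this same computation, carried out with $\rho_i - F_i(y)\ge \nu$ for each $i$ (which is exactly what the hypothesis $F(y)\not> F(x)-\nu\mathbf 1$ for all $x\in Y$ delivers, after the analogous index-chasing argument comparing $b$ against $F(x)-\nu\mathbf 1$), yields the bound $\nu^q$ claimed in Proposition \ref{HIincrease_gen}(ii); this is why the excerpt says part (ii) ``follows from the proof of the only if part''.

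\textbf{Anticipated obstacle.} The delicate point is not the geometry but the bookkeeping around the strict-vs-weak inequalities encoded in the paper's $\le$ convention (where $u\le v$ forbids $u=v$), and making sure the ``reference point dominates everything'' hypothesis is used correctly so that the relevant open box genuinely has positive volume — in particular handling the case where $F(y)$ might have a component equal to some $\rho_i$, which is ruled out only by choosing $s$ large. Once those conventions are pinned down, the measure-theoretic core is the short disjointness argument above.
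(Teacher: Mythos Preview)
Your overall framing is right---write the difference as $\vol\big([F(y),\rho]\setminus\bigcup_{x\in Y}[F(x),\rho]\big)$ and reason about box containment---and the contrapositive argument (domination $\Rightarrow$ zero increase) is correct, though you have the direction labels swapped: that argument is the contrapositive of the \emph{if} direction, not the only-if. The genuine gap is in the other implication. Your claim that the full open box $\prod_i\big(F_i(y),\rho_i\big)$ is disjoint from $\bigcup_{x\in Y}[F(x),\rho]$ is false: take $q=2$, $F(y)=(1,0)$, $F(x)=(0,2)$, $\rho=(3,3)$; then $y$ is non-dominated by $x$, yet the point $(2,2.5)$ lies in both your open box and $[F(x),\rho]$. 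The step ``combined with $b_i<F_i(y)$'' is the culprit---a point $b$ in $\prod_i\big(F_i(y),\rho_i\big)$ satisfies $b_i>F_i(y)$, not $b_i<F_i(y)$, so from $F_i(x)>F_i(y)$ you cannot conclude $b_i<F_i(x)$. Consequently the displayed lower bound $\prod_i(\rho_i-F_i(y))$ is far too large; the actual hypervolume gain can be arbitrarily smaller.

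What the paper does---and what your parenthetical about $\nu^q$ nearly captures---is to localize in a \emph{small} box at the corner $F(y)$. From \eqref{eq:nondomnew}, for each $x\in Y$ there is an index $\ell$ with $f_\ell(y)<f_\ell(x)$; set $\varepsilon>0$ to be the minimum such gap over all $x\in Y$ and all admissible $\ell$, and take $B=\prod_i[f_i(y),\,f_i(y)+\varepsilon]$. For any $b$ in the interior of $B$ and any $x\in Y$ one then has $b_\ell<f_\ell(y)+\varepsilon\le f_\ell(x)$, so $b\notin[F(x),\rho]$; and $B\subseteq[F(y),\rho]$ once $s$ is large. This yields an increase of at least $\varepsilon^q$, and is exactly the mechanism behind Proposition~\ref{HIincrease_gen}(ii). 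The missing idea in your attempt is precisely this localization near $F(y)$ via a finite positive gap; the whole interior of $[F(y),\rho]$ is generally \emph{not} uncovered.
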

\begin{proof} (Only if part). From \eqref{eq:nondomnew}, for all $x\in Y$ an index $\ell\in\{1,\dots,q\}$ exists such that
    \[
    f_\ell(y)< f_\ell(x) .
    \]
    Let us define the following subsets $Y_j\subseteq Y$, for $j=1,\ldots,q$:
\[
Y_j=\{ x\in Y:f_j(y) <f_j(x) \}
\]
 and the following scalar $\varepsilon>0$:
\[\varepsilon\ =\min_{j=1,\ldots,q} \quad \min_{\ x\in Y_j} f_j(x) -f_j(y). 
\]
Then, 
\[
F(y) \not >  F(x) -\eps\mathbf{1} \qquad\forall\ x\in Y
\]
 Now, we can consider the following set:
    \[
    B = \left\{w\in\Re^q:\ f_i(y) \leq w_i \leq f_i(y)+\varepsilon, \ i=1,\dots,q\right\},
    \]
    such that $Vol(B) = \varepsilon^q$.
    Then, it holds that
    \[
    int(B)\cap \left(\bigcup_{a\in F(Y)}[a,\rho]\right) = \emptyset. 
    \]
    Indeed, if the above intersection was not empty, a point $w\in\Re^q$ would exist such that $w_i< f_i(y)+\nu$ for all $i=1,\dots,q$. Furthermore, either $x_w$ exists such that $w = f(x_w)\in F(Y)$ or $w$ is dominated by a point in $F(Y)$. 
    \begin{enumerate}
        \item if $w\in F(Y)$, we know that an index $\ell$ exists such that $f_\ell(y) \leq f_\ell(x_w)-\varepsilon = w_\ell-\varepsilon$ which contradicts  $w_i< f_i(y)+\varepsilon$ for all $i=1,\dots,q$.

        \item if $w$ is dominated by a point in $F(Y)$, there will be a point $\tilde w\in F(Y)$ dominating $w$. Then, we again obtain a contradiction by reasoning as in the previous point.
    \end{enumerate}

    Now, we show that 
    \[
    B\subseteq \left(\bigcup_{a\in F(Y\cup\{y\})}[a,\rho]\right). 
    \]
    To this aim,  let us consider
    \[
    C = B \cap \{w\in\Re^q:\ w_i \leq \rho_i,\ i=1,\dots,q\}.
    \]
    The previous inclusion does not hold only if  $C\neq B$ but this imply $f_i(y) + \varepsilon > \rho_i$ for some index $i=1,\dots,q$. But this cannot be since $s> 0$ and sufficiently large.
    Finally, recalling that
    \[
    HI(F(Y)) = \vol\left(\bigcup_{a\in F(Y)}[a,\rho]\right).
    \]
    we have
    \[
    HI(F(Y\cup\{y\})) \geq  HI(F(Y)) + Vol(B) = HI(F(Y)) + \varepsilon^q,
    \]    
    which concludes the proof of the "only if" part.
    \par\medskip\noindent
    (If). For simplicity we introduce the following sets:
    \begin{eqnarray*}
        && M=\bigcup_{a\in F(Y)}[a,\rho] \\
        &&N=[F(y),\rho]
    \end{eqnarray*}
    By assumption we have:
    \begin{eqnarray*}
 &&\vol(M\cup N)-\vol(M)>0
    \end{eqnarray*}
    from which:
        \begin{eqnarray*}
 &&\vol(M\cup N)-\vol(M)=\vol(M)+\vol(N\setminus (N\cap M))-\vol(M) >0\\
 &&\vol(N\setminus (N\cap M))>0.
 \end{eqnarray*}
 This implies that a point $\bar w\in N$ and a scalar $\varepsilon>0$ exist such the set
  \[
    \bar B = \left\{w\in\Re^q:\ |\bar w_i - w_i| \leq \varepsilon, \ i=1,\dots,q\right\},
    \]
    satisfies:
    \[
    \bar B \subset N\setminus (N\cap M) .
    \]
    Since $ \bar B \subset N$ implies that
    $$\bar w \in int(N)=int([F(y),\rho]) $$
    and, hence, we have
\begin{equation}\label{rel1}
 F(y)< \bar w
 \end{equation}
Then $$\bar B\cap M=\bar B\cap\Big( \bigcup_{a\in F(Y)}[a,\rho]\Big)=\emptyset$$ ensures that for every $x\in F(Y)$  an index $\ell\in\{1,\dots,q\}$ must exist such that
$$\bar w_\ell+\varepsilon <f_\ell(x)$$
and recalling (\ref{rel1}) that
$$f_\ell(y)+\varepsilon <f_\ell(x)$$
which proves (\ref{eq:nondomnew}).
\end{proof}

Note that the reference vector $\rho$ (in the objective space) used in Proposition \ref{HIincrease_gen} is required to be $\rho = f_{\max} + s\mathbf{1}$ with $s$ a sufficiently large positive number. To be able to prove the result of Proposition \ref{HIincrease_gen}, it is not possible to use simply the vector $f_{\max}$. In the following remark, we evidence that a pathological situation might arise if we consider $\rho=f_{\max}$, namely the sufficient increase in the hypervolume indicator might fail to happen.

\begin{remark}\label{remarkA.1}
Let us consider the following bi-objective optimization problem
\[
\min\ x^2,\ \dfrac{(x-4)^2}{18}.
\]
In figure \ref{fig_g1} we report the graphs of the objective functions.
\begin{figure}
    \centering    \includegraphics[width=0.7\textwidth]{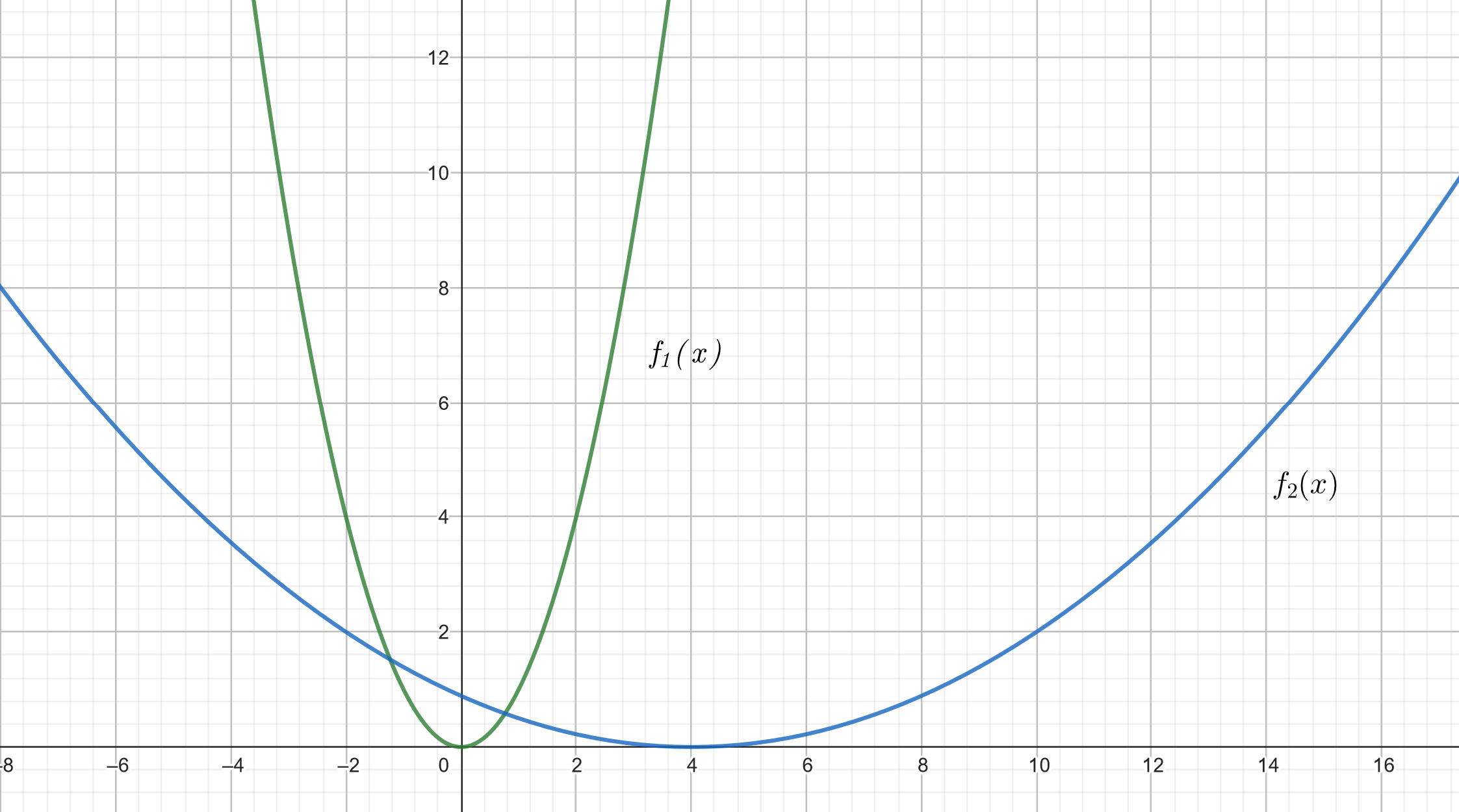}
    \caption{Graphs of the objective functions of Remark \ref{remarkA.1}}
    \label{fig_g1}
\end{figure}

We choose $x_0 = 1$ so that $f^1(x_0) = 1$, $f^2(x_0) = 1/2$. We compute $f_{\max}$ by solving the following problems
\[
f^1_{\max} = \begin{array}[t]{l}
     \max\ x^2 \\
     s.t.\quad \dfrac{(x-4)^2}{18} \leq \dfrac{1}{2} 
\end{array} \qquad f^2_{\max}=
\begin{array}[t]{l}
     \max\ \dfrac{(x-4)^2}{18} \\
     s.t.\quad x^2 \leq 1 
\end{array}
\]
and obtain the reference values $f^1_{\max} = 49$ and $f^2_{\max} = 25/18$. Hence, the initial hypervolume is 
\[
HI(F(\{x_0\})) = 48\times\dfrac{8}{9}.
\]
Now, let us suppose that a new point $x_1 = 0$ is produced where $f(x_1) = (0,16/18)^\top$. As we can see, point $x_1$ sufficiently reduces the first objective function which passes from $f^1(x_0)=1$ to $f^1(x_1) = 0$. As we can also see in figure \ref{fig_g2}, 
the hypervolume increases from $HI(F(\{x_0\}))$ to
\[
HI(F(\{x_0,x_1\})) = HI(F(\{x_0\})) + 1/2
\]
which is less that $1^2$, i.e. the reduction in the first objective function squared.
\begin{figure}
    \centering
    \includegraphics[width=0.5\textwidth]{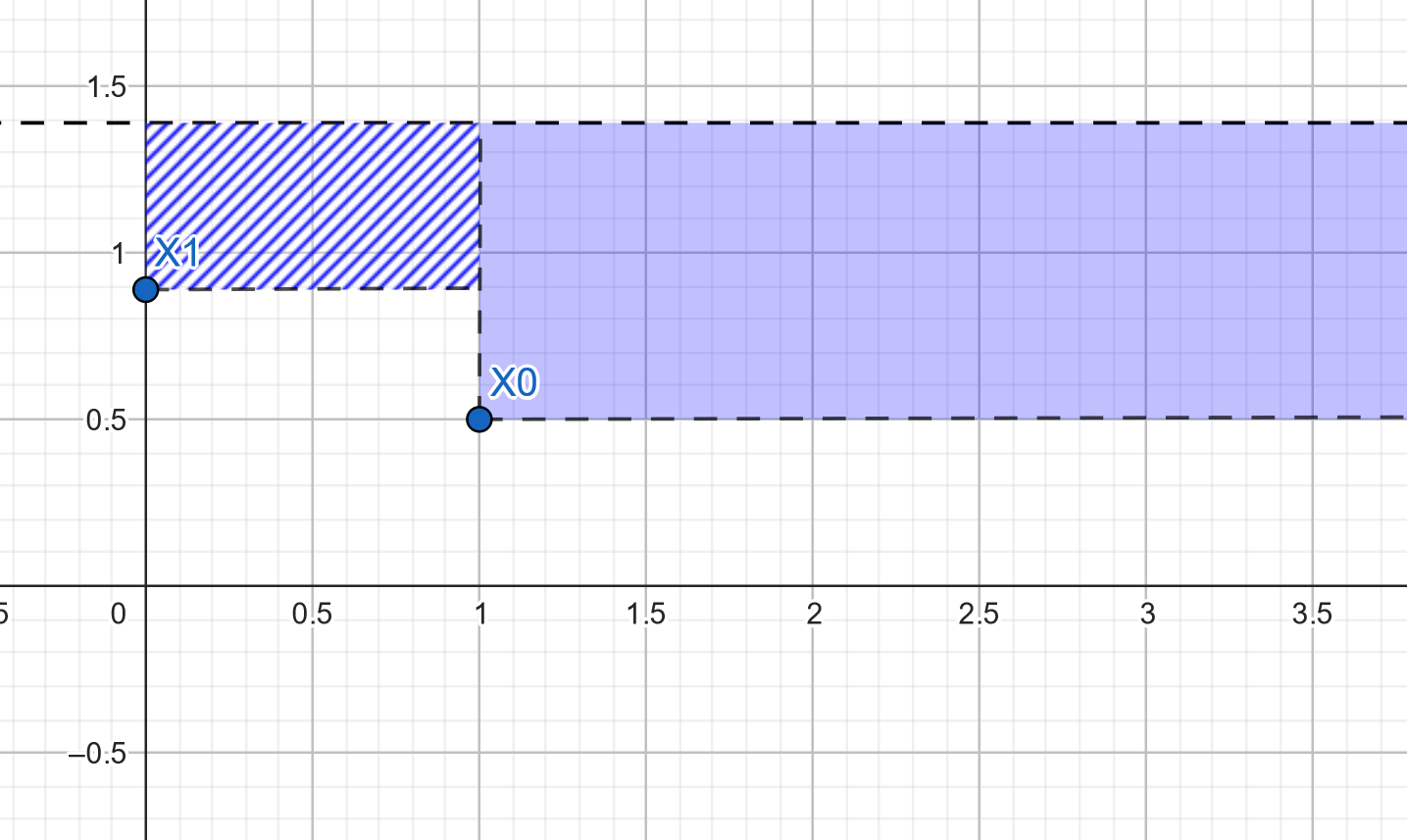}
    \caption{Hypervolume representation for the example in Remark \ref{remarkA.1}}
    \label{fig_g2}
\end{figure}
\end{remark}


\end{document}